\theoremstyle{definition}
\newtheorem{theorem}{Theorem}[section]
\newtheorem{proposition}[theorem]{Proposition}
\newtheorem{lemma}[theorem]{Lemma}
\newtheorem{corollary}[theorem]{Corollary}
\newtheorem{definition}[theorem]{Definition}
\newtheorem{remark}[theorem]{Remark}
\newtheorem{example}[theorem]{Example}
\newcommand{\reg}{\mathrm{reg}}
\newcommand{\Kosz}{\operatorname{Kosz}}
\newcommand{\Hom}{\operatorname{Hom}}
\newcommand{\rank}{\operatorname{rank}}
\newcommand{\D}{\operatorname{d}\!}
\newcommand{\CC}{\mathbb{C}}
\newcommand{\QQ}{\mathbb{Q}}
\newcommand{\ZZ}{\mathbb{Z}}
\newcommand{\RR}{\mathbb{R}}
\newcommand{\OO}{\mathcal{O}}
\newcommand{\id}{\operatorname{id}}
\newcommand{\Grass}{\operatorname{Grass}}
\newcommand{\Stief}{\operatorname{Stief}}
\newcommand{\mult}{\mathrm{mult}}
\newcommand{\sym}{\mathrm{sym}}
\newcommand{\Jac}{\operatorname{Jac}}
\title{Some L\^e-Greuel type formulae on stratified spaces}
\author{Matthias Zach}
\begin{document}

\begin{abstract}
  We extend the circle of ideas from \cite{Zach22} 
  to functions $f \colon (\CC^n, 0) \to (\CC^k, 0)$ 
  with an isolated singularity in a stratified sense 
  on an arbitrary, but fixed complex analytic germ 
  $(X, 0)$. An extension of Tib{\u a}r's Bouquet 
  Theorem for hypersurfaces to this setup allows for 
  a topological definition of Milnor numbers $\mu(\alpha; f)$ 
  for each stratum $V^\alpha$ of $X$ and we prove 
  several formulas which compute these numbers 
  as (alternating) sums of certain ``homological indices''.
  The main technical result at work in the background 
  is a local Riemann-Roch type theorem, relating 
  a topological obstruction to holomorphic Euler 
  characteristics.
\end{abstract}

\maketitle

\tableofcontents

\section{Introduction and results}

\subsection{The classical L\^e-Greuel Formula}
A classical formula due to L\^e \cite{Le74} and Greuel \cite{Greuel75} 
allows one to compute the Milnor 
number $\mu(f)$ of an \textbf{I}solated \textbf{C}omplete \textbf{I}ntersection 
\textbf{S}ingularity (ICIS for short) of a holomorphic map germ 
\[
  f \colon (\CC^n, 0) \to (\CC^k, 0).
\]
For such singularities, one has the Milnor-L\^e fibration 
\[
  f \colon B_\varepsilon \cap f^{-1}(D_\delta \setminus \Delta) \to D_\delta \setminus \Delta
\]
for suitable $1\gg \varepsilon \gg \delta > 0$ over the complement of the discriminant 
$(\Delta, 0) \subset (\CC^k, 0)$, the set of critical values; see e.g. \cite{HandbookVol1}
for details. Hamm has shown in \cite{Hamm72} that the fiber 
\[
  M_f = B_\varepsilon \cap f^{-1}(\{v\}), \quad v \in D_\delta \setminus \Delta
\]
is homotopy equivalent to a bouquet of spheres 
\begin{equation}
  \label{eqn:ICISMilnorNumber}
  M_f \cong_{\mathrm{ht}} \bigvee_{j=1}^{\mu(f)} S^{n-k} 
\end{equation}
of real dimension $n-k$ and it is the number of these spheres which we will refer to 
as the Milnor number of $f$. 

Now the formula by L\^e and Greuel suggests that after a sufficiently general 
change of coordinates in the codomain $(\CC^k, 0)$ of $f$ we can compute $\mu(f) = \sum_{j=1}^k (-1)^{j-1} \lambda_j(f)$ as an alternating sum where 
\begin{equation}
  \label{eqn:OriginalLeGreuel}
  \lambda_j(f) = \dim_\CC \left(
  \CC\{x_1,\dots,x_n\} /\left(\left\langle f_1, \dots, f_{k-j} \right\rangle + J_{k-j+1} \right)
  \right)
\end{equation}
and
\[
  J_p = 
  \left\langle
  \det
  \begin{pmatrix}
    \frac{\partial f_1}{\partial x_{i_1}} & \dots & 
    \frac{\partial f_1}{\partial x_{i_p}} \\
    \vdots & \ddots & \vdots \\
    \frac{\partial f_p}{\partial x_{i_1}} & \dots & 
    \frac{\partial f_p}{\partial x_{i_p}} \\
  \end{pmatrix}
  : 0 < i_1 < \dots < i_p \leq n
  \right\rangle 
\]
is the ideal of $p$-minors of the Jacobian matrix of $(f_1, \dots, f_p)$.

\subsection{The central new results}
In this article we describe several ways to generalize these findings to 
holomorphic functions $f \colon (\CC^n, 0) \to (\CC^k, 0)$ which have an 
isolated singularity in a stratified sense 
(cf. Definition \ref{def:IsolatedStratifiedCompleteInteresectionSingularity}) 
on an \textit{arbitrary} reduced, complex analytic germ $(X, 0) \subset (\CC^n, 0)$ in 
its domain. 

Namely, if $\{V^\alpha\}_{\alpha \in A}$ are the strata of 
a fixed Whitney stratification of $X$, then Tib{\u a}r's Bouquet Theorem 
\cite{Tibar95} can be extended to 
\[
  M_{f|(X, 0)} \cong \mathcal L^{k-1}(X, \{0\}) \vee 
  \bigvee_{\alpha \in A \setminus \{0\}} \bigvee_{i = 1}^{\mu(\alpha; f)}
  S^{d(\alpha) - k + 1}(\mathcal L(X, V^\alpha))
\]
(see Theorem \ref{thm:BouquetDecompositionsCompleteIntersections} below)
where we denote by $M_{f|(X, 0)}$ the Milnor fiber of the restriction $f|(X, 0)$. 
The numbers $\mu(\alpha; f)$ generalize the Milnor number 
for classical ICIS from Equation (\ref{eqn:ICISMilnorNumber}) in a natural way. 
In Theorem \ref{thm:LeGreuelFormulaI} we provide several formulas for them 
in terms of holomorphic Euler characteristics; 
one of them being, 
for instance, 
\begin{eqnarray*}
  \mu(\alpha; f)
  & = & 
  \sum_{j=1}^k 
  \chi\left(
  R(\nu_\alpha)_*\left(
  \begin{array}{c}
    {\mathcal K}osz(\nu_\alpha^*(f_1,\dots,f_{j-1}, l_{j+1},\dots, l_k))\\
    \otimes \\
    {\mathcal E}NC(\nu_\alpha^*(\D f_1,\dots, \D f_j, \D l_{j+1},\dots, \D l_k))
  \end{array}
  \right)
  \right)\\
  & & - 
  \sum_{j=1}^k 
  \chi\left(
  R(\nu_\alpha)_*\left(
  \begin{array}{c}
    {\mathcal K}osz(\nu_\alpha^*(f_1,\dots,f_{j-1}, l_{j+1},\dots, l_k))\\
    \otimes \\
    {\mathcal E}NC(\nu_\alpha^*(\D f_1,\dots, \D f_{j-1}, \D l_{j},\dots, \D l_k))
  \end{array}
  \right)
  \right).
\end{eqnarray*}
They involve the Nash modification $\nu_\alpha \colon \tilde X_\alpha \to X_\alpha$ 
of the closures $X_\alpha = \overline{V^\alpha}$ of the strata and sheafified 
versions of the Koszul and the Eagon-Northcott complex. 
This is a natural continuation and extension of the ideas presented in \cite{Zach22}
and these formulas should be thought of as ``homological indices'' 
in the sense of e.g. \cite{GomezMont98} and \cite{EbelingGuseinZadeSeade04}. 

\subsection{Generic complexes and homological indices}
To briefly sketch the connection with the classical L\^e-Greuel Formula, 
recall that the Milnor number for a hypersurface singularity (i.e. for $k=1$) on $(\CC^n, 0)$ 
can be reinterpreted as a homological index
\[
  \mu(f) = \chi\left(\Kosz\left(\frac{\partial f}{\partial x_1}, \dots, \frac{\partial f}{\partial x_n}
  \right) \right).
\]
Given that $f$ has isolated singularity, the partial derivatives of $f$ must form a regular 
sequence, so that the associated Koszul complex is exact but in degree zero. The homology 
there is the Milnor algebra whose length is known to be equal to the Milnor number 
$\mu(f)$ for hypersurfaces. 

Similar considerations can be made for the quotient modules appearing in 
the more general L\^e-Greuel Formula (\ref{eqn:OriginalLeGreuel}):
\begin{equation}
  \label{eqn:LeGreuelAsEulerChar}
  \lambda_j(f) = 
  \chi\left(
  \Kosz(f_1,\dots,f_{k-j}) \otimes \mathrm{ENC}\left(\Jac(f_1,\dots, f_{k-j+1})\right)
  \right),
\end{equation}
where again $\Kosz$ stands for the Koszul- and $\mathrm{ENC}$ for the Eagon-Northcott complex 
(which we recall below). It therefore comes as no surprise that this combination of 
complexes will also play a key role in the main results of this paper. 

\medskip
The general goal is to find a \textit{generic} complex $C^\bullet$ over some extended ring 
$\CC\{x\}[T]$ in additional, auxillary variables, say $T = T_1,\dots, T_N$, also called 
\textit{parameters}. The terms 
$C^p$ must be independent of $T$ in the sense that they
are of the form $M^p \otimes_{\CC\{x\}} \CC\{x\}[T]$ for some $\CC\{x\}$-modules $M^p$. 
The differentials $\psi \colon C^p \to C^{k+1}$ may involve the parameters $T_j$.

Both the Koszul- and the Eagon-Northcott complexes are examples of such generic complexes: 
Instead of constructing them from a sequence or a matrix of ring elements, 
we can also write them down as complexes over 
corresponding rings of indeterminates. These ideas have been introduced and developed 
for instance in \cite{EagonNorthcott67}. 

For such a generic complex $C^\bullet$ to be useful for a particular application, it 
should, after substitution of the parameters $T_i$, exhibit certain desirable properties. 
In the concrete example of the L\^e-Greuel formula above, this means that we start with 
\[
  \Kosz(c) \otimes \mathrm{ENC}(A)
\]
where $c = (c_1,\dots, c_{k-j})$ are indeterminates and $A$ is another matrix of 
indeterminates $a_{s, t}$ over the ring $\CC\{x\}$.
For any given ICIS $f \colon (\CC^n, 0) \to (\CC^k,0)$ we can then substitute 
according to Equation (\ref{eqn:LeGreuelAsEulerChar}) to compute $\lambda_j(f)$ from it. 

\medskip
In a similar vein, generic complexes for homological indices have been defined 
in \cite{GomezMont98} for vector fields with isolated singularity on an isolated 
hypersurface singularity, in \cite{EbelingGuseinZadeSeade04} for $1$-forms 
with isolated zero on arbitrary complex analytic spaces germs with isolated singularity, 
and also in \cite{GorskyGuseinZade18} for collections of $1$-forms.  

We will show in Corollary \ref{cor:bakingComplexes} that in the context 
of our generalized L\^e-Greuel Formula in Theorem \ref{thm:LeGreuelFormulaI}, 
for every stratum $V^\alpha$ of $(X, 0) \subset (\CC^n, 0)$ 
there exists such a generic complex $\mathrm{Prep}(X_\alpha; k, m)$ depending only 
on the closure of the stratum $X_\alpha = \overline {V^\alpha}$ and such that after 
appropriate substitution, its Euler characteristic computes the summands for $\mu(\alpha; f)$.

\medskip

The main workhorse behind our L\^e-Greuel formulas in Theorem \ref{thm:LeGreuelFormulaI}
is a technical result which we believe might enjoy wider applicability in the circle 
of ideas presented here; namely Theorem \ref{thm:MainWorkhorse}.
It is a local Riemann-Roch type theorem in the loose sense that it
relates a topological obstruction on the left hand side with a holomorphic Euler 
characteristic on the right hand side. In particular, it shows how the summands of 
the $\mu(\alpha; f)$ can actually be interpreted as localized Chern classes.

\section{Topological preliminaries}

\subsection{A brief review of the Handlebody Theorem}
Using the ``Carrousel'' developed by L\^e and his own ``Handlebody Theorem'', 
Mihai Tib{\u a}r has established the following ``Bouquet Theorem'' in \cite{Tibar95}:

\begin{theorem}
  \label{thm:BouquetDecompositionHypersurface}
  Let $(X, 0) \subset (\CC^n, 0)$ be a reduced complex analytic space of 
  dimension $\dim (X, 0) \geq 2$\footnote{It is not explicitly stated but seemingly understood from the proof in \cite{Tibar95} that actually no component of $(X, 0)$ must have dimension $<2$}, 
  endowed 
  with a Whitney stratification $X = \bigcup_{\alpha \in A} V^\alpha$ with 
  connected strata of dimensions $d(\alpha) := \dim_\CC V^\alpha$. Suppose 
  \[
    f \colon (\CC^n, 0) \to (\CC, 0)
  \]
  is a complex analytic germ such that the restriction $f|_X$ has an isolated 
  singularity in the stratified sense and let 
  \[
    M_{f|(X, 0)} = B_\varepsilon \cap X \cap f^{-1}(\{v\}), \quad 1 \gg \varepsilon \gg |v| > 0
  \]
  be its stratified Milnor fiber. Then 
  \[
    M_{f|(X, 0)} \cong_{\mathrm{ht}} \mathcal L(X, \{0\}) \vee \bigvee_{\alpha \in A} 
    \bigvee_{j=1}^{\mu(\alpha; f)} S^{d(\alpha)}(\mathcal L(X, V^\alpha))
  \]
  with $\mu(\alpha; f) = 0$ whenever $d(\alpha) = 0$. 
\end{theorem}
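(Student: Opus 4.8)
The plan is to reduce Tib\u ar's Bouquet Theorem for hypersurfaces (Theorem~\ref{thm:BouquetDecompositionHypersurface}) to the two ingredients it is built upon: L\^e's Carrousel, which controls the monodromy and the vanishing cycles in a small two-dimensional slice transverse to the stratification, and Tib\u ar's own Handlebody Theorem, which glues the local vanishing data along the strata into a global homotopy decomposition of the Milnor fibre. First I would set up the Milnor--L\^e fibration $f\colon B_\varepsilon\cap X\cap f^{-1}(D_\delta\setminus\{0\})\to D_\delta\setminus\{0\}$ for $1\gg\varepsilon\gg\delta>0$, whose existence in the stratified setting follows from the usual Thom--Mather/curve-selection arguments once $f|_X$ has an isolated singularity in the stratified sense; this is what makes $M_{f|(X,0)}$ well defined up to homotopy. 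Then I would stratify $f^{-1}(0)\cap X\cap B_\varepsilon$ compatibly with the $V^\alpha$, and for each stratum $V^\alpha$ with $d(\alpha)\geq 1$ choose a generic point and a normal slice $(N^\alpha,p_\alpha)$ of complementary dimension, so that $(X\cap N^\alpha, p_\alpha)$ together with $f|$ again has an isolated singularity in the stratified sense of one lower stratified depth.

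The core of the argument is an induction on $\dim(X,0)$ (or, more precisely, on the depth of the stratification). Applying the Carrousel to the two-complex-dimensional case supplies the local model: near a generic point of $V^\alpha$ the Milnor fibre of $f$ looks like the Milnor fibre of $f$ restricted to the normal slice, times (homotopically) the complex link $\mathcal L(X,V^\alpha)$, and the number of vanishing $S^{d(\alpha)}(\mathcal L(X,V^\alpha))$ cells contributed is precisely the stratified Milnor number $\mu(\alpha;f)$ of the transverse data. I would then invoke the Handlebody Theorem to assemble $M_{f|(X,0)}$ as a CW-complex obtained from the cone point contribution $\mathcal L(X,\{0\})$ by successively attaching, stratum by stratum in order of increasing dimension, the cells $S^{d(\alpha)}(\mathcal L(X,V^\alpha))$; the key point, which the Handlebody Theorem delivers, is that all attaching maps are null-homotopic, so the handlebody collapses to a wedge. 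The convention $\mu(\alpha;f)=0$ for $d(\alpha)=0$ is forced because a stratified isolated singularity means $f|_X$ is nonsingular along the $0$-dimensional strata other than $\{0\}$ itself, and $\{0\}$'s contribution is accounted for separately by the $\mathcal L(X,\{0\})$ term; I would need the hypothesis $\dim(X,0)\geq 2$ (and that no component has dimension $<2$) precisely so that the Carrousel and the Handlebody Theorem apply in the base of the induction without degenerate low-dimensional exceptions.

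The main obstacle I anticipate is making the gluing rigorous: verifying that the local fibrations over generic points of the various strata patch together compatibly and that the resulting filtration of $M_{f|(X,0)}$ has trivial attaching maps. This is exactly where one must be careful with Whitney conditions (to get local triviality of $f$ along each stratum, by Thom--Mather) and with the compatibility of the normal slices chosen for nested strata. Concretely, I would first establish a relative version of the statement for the pair $(M_{f|(X,0)}, M_{f|(X,0)}\cap f^{-1}(0)\text{-neighborhood})$ and then remove the relative hypothesis by a standard excision/cone argument. A secondary technical point is identifying the suspension coefficient $d(\alpha)$: one checks by dimension count in the normal slice that the transverse Milnor fibre of a function with stratified isolated singularity on a space of dimension $d(\alpha)$ is $(d(\alpha)-1)$-connected, so its reduced suspension to the complex link yields spheres with the stated index. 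Since the present excerpt only asks for the statement (the generalization to $k>1$ is Theorem~\ref{thm:BouquetDecompositionsCompleteIntersections}), I would present this hypersurface case essentially as a recollection of \cite{Tibar95}, filling in the stratified Milnor--L\^e fibration and the inductive bookkeeping, and deferring the genuinely new complete-intersection extension to the later section.
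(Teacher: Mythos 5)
Your high-level skeleton --- stratified Milnor--L\^e fibration, L\^e's carrousel, Tib{\u a}r's Handlebody Theorem, and the fact that null-homotopic attaching maps let the handlebody collapse to a wedge --- is the same route the paper takes; note, however, that the paper does not reprove this theorem at all: it quotes it from \cite{Tibar95} and only recalls the mechanism (relative polar curves, the map $\Phi=(f,l)$, the carrousel, and the thimble count), so what is really being compared here is your sketch against Tib{\u a}r's argument as recalled in the text.

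The genuine gap is in your middle step, where you localize the summands $S^{d(\alpha)}(\mathcal L(X,V^\alpha))$ at generic points of the strata inside $f^{-1}(0)$ and describe $\mu(\alpha;f)$ as ``the stratified Milnor number of the transverse data'' in a normal slice $(N^\alpha,p_\alpha)$. This cannot work: since $f|_X$ has an \emph{isolated} singularity in the stratified sense, $f$ is a stratified submersion at every point of $V^\alpha\setminus\{0\}$ near the origin, so a normal slice at a generic point of $V^\alpha$ carries no vanishing cycles of $f$ at all; the transverse invariant you invoke is vacuous, and the numbers $\mu(\alpha;f)$ are \emph{not} transverse data of $f$ along the strata. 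In Tib{\u a}r's proof they come from an auxiliary generic linear form $l$: one studies $\Phi=(f,l)$, the relative polar curves $\Gamma_\alpha(f,l)$ and their discriminant images in a disc $D\subset\CC^2$ --- this target disc is where the carrousel lives, not a ``two-dimensional slice transverse to the stratification'' --- and the thimbles attached along $V^\alpha$ sit at the Morse points of $l$ on $M_f\cap V^\alpha$ (equivalently, of the morsification $f+t\cdot l$), giving $\mu(\alpha;f)=\mathrm{mult}_l(\Gamma_\alpha(f,l),0)-\mathrm{mult}_f(\Gamma_\alpha(f,l),0)$ as in (\ref{eqn:ThimbleNumber}). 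The normal slice at a generic point of $V^\alpha$ enters only as the carrier of the \emph{normal Morse data}, i.e.\ the cone on $\mathcal L(X,V^\alpha)$, at those Morse points; likewise the suspension index $d(\alpha)$ comes from joining the tangential Morse data of a complex Morse point on a $d(\alpha)$-dimensional stratum with that complex link, not from a connectivity estimate on a transverse Milnor fibre of $f$. With the generic-point/transverse-Milnor-number picture replaced by the polar-curve/carrousel picture, the rest of your outline does match the cited proof.
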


By $\mathcal L(X, V)$ we denote the \textit{complex link} of the stratified space $X$ 
along a stratum $V$, see e.g. \cite{GoreskyMacPherson88}, and $S^k(Y)$ denotes 
the $k$-fold repeated suspension of a topological space $Y$.

\begin{example}
  \label{exp:RunningExampleIntroduction}
  Consider the space of square symmetric $2\times 2$-matrices of rank $\leq 1$
  \[
    X = \left\{\varphi = \begin{pmatrix} x & z \\ z & y\end{pmatrix} \in \CC^{2\times 2}_{\sym} : \det(\varphi) = 0\right\} 
    \subset  \CC^{2\times 2}_{\sym} \cong \CC^3
  \]
  endowed with the \textit{rank stratification}
  \[
    X = 
    \underbrace{\{\varphi \in \CC^{2\times 2}_{\sym} : \rank \varphi = 1\}}_{=: V^1} 
    \overset{\cdot}{\cup}
    \underbrace{\{\varphi \in \CC^{2\times 2}_{\sym} : \rank \varphi = 0\}}_{=: V^0 = \{0\}} 
  \]
  and the function 
  \[
    f \colon \CC^{2\times 2}_{\sym} \to \CC^1, \quad \begin{pmatrix} x & z \\ z & y\end{pmatrix} 
    \mapsto y - x^k.
  \]
  Note that $f$ is non-singular on $(\CC^{2\times 2}_{\sym}, 0)$, but 
  its restriction to $X$ has an isolated singularity in the stratified sense 
  (cf. Definition \ref{def:IsolatedStratifiedCompleteInteresectionSingularity} below).

  This example has been chosen so that we can easily compare with classical approaches. 
  The central fiber $X \cap f^{-1}(\{0\})$ is isomorphic to the singularity 
  \[
    h \colon (\CC^2, 0) \to (\CC^1, 0), \quad (x, z) \mapsto \det \begin{pmatrix} x & z \\ z & x^k \end{pmatrix} = x^{k+1} - z^2.
  \]
  We know that the Milnor fiber of this singularity is $M_h \cong_{\mathrm{ht}} \bigvee_{i=1}^{k} S^1$,
  a bouquet of $\mu(h) = k$ spheres, where $\mu(h)$ is the classical Milnor number of 
  the isolated hypersurface singularity $h$. 

  On the contrary, we find that 
  $\mathcal L(X, \{0\}) \cong_{\mathrm{ht}} S^1$
  already contributes one sphere to the bouquet decomposition of $M_{f|(X, 0)}$ 
  in Theorem \ref{thm:BouquetDecompositionHypersurface}. 
  Therefore, since clearly $M_h \cong M_{f|(X, 0)}$, we must have $\mu(1; f) = k-1$.
\end{example}

The numbers $\mu(\alpha; f)$ can be computed as follows; we refer to \cite{Tibar95} 
for the details. For a linear 
form $l \colon \CC^n \to \CC$ we can consider the \textit{relative polar variety} 
\[
  \Gamma_\alpha(f, l) = \overline{\{x \in V^\alpha : \D f(x) \textnormal{ and } \D l(x) \textnormal{ are linearly dependent on } T_x V^\alpha\}}.
\]
When $l$ is chosen in sufficiently general position, then this is known to be a curve and, moreover, 
the restriction of the composite map
\[
  \Phi : (X, 0) \to (\CC^2, 0), \quad x \mapsto (f(x), l(x))
\]
to $\Gamma_\alpha(f, l)$ is a \textit{finite} map which is generically $1:1$ onto its image
\[
  \Phi|_{\Gamma_\alpha(f, l)} : (\Gamma_\alpha(f, l), 0) \to (\Delta_\alpha(f, l), 0) \subset (\CC^2, 0).
\]
For a suitably chosen representative of $(X, 0)$ and a polydisc $D \subset \CC^2$
the map $\Phi \colon X \to D$ is a topological fiber bundle away from the points of 
\[
  \Delta := \bigcup_{\alpha \in A} \Delta_\alpha(f, l).
\]
Its fiber $M_{f, l} = \Phi^{-1}(\{(\delta, \eta)\})$ 
is a hyperplane section of the Milnor fiber $M_f$ of $f$.
Up to homeomorphism, we recognize the latter as $M_f \cong \Phi^{-1}(\{\delta\} \times \CC)$.
With the genericity assumptions made, 
the function $l$ can be assumed to have only complex Morse singularities on the interior of $M_f$.
These sit precisely at the points of the intersection $M_f \cap \Gamma_\alpha(f, l)$ 
for the various $\alpha \in A$. It can be shown that indeed we can recover 
$M_f$ from $M_{f, l}$ up to homotopy by attaching a \textit{thimble} 
$S^{d(\alpha)}(\mathcal L(X, V^\alpha))$ to $M_{f, l}$ for every such 
Morse critical point on $V^\alpha$. The number of these points is the 
multiplicity 
\[
  \mathrm{mult}_l(\Gamma_\alpha(f, l),0).
\]
Similarly, the Milnor fiber $M_l$ of $l$ on $(X, 0)$, i.e. the complex link 
$\mathcal L(X, \{0\})$ of $X$ at the origin,
is nothing but $\Phi^{-1}(\CC\times \{\eta\})$. Again, we can recover $M_l$ 
from $M_{f, l}$ by attaching thimbles at Morse critical points, this time for the 
function $f$ on $M_l$. The number of these points is then equal to 
\[
  \mathrm{mult}_f(\Gamma_\alpha(f, l), 0).
\]
The key insight of Tib{\u a}r was that the ``carrousel monodromy'' can be used to
show that, up to homotopy, we can identify $M_l$ with a subspace 
of the Milnor fiber $M_f$ and, moreover, for every \textit{additional} thimble 
$e$ which is needed to complete $M_l$ to $M_f$, there is already one thimble $e'$ 
attached to $M_{f, l}$ in $M_l$ whose boundary $\partial e'$ is homotopy equivalent 
to the boundary $\partial e$ of $e$. This leads to Tib{\u a}r's 
\textit{Handlebody theorem} in \cite{Tibar95}:

\begin{theorem}
  The Milnor fiber $M_f$ is obtained from the complex link $M_l$ by attaching 
  thimbles for stratified Morse singularities. The boundary of each such attaching 
  map retracts to a point in $M_l$.
\end{theorem}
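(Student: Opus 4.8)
By the discussion preceding the theorem we may fix a good representative $\Phi = (f,l)\colon X \to D$, where $D = D''\times D' \ni (u,v)$ is a small bidisc (the first factor recording values of $f$, the second values of $l$) and $\Phi$ is a locally trivial topological fibre bundle over the complement of the plane curve $\Delta = \Phi(\Gamma)$, $\Gamma = \bigcup_\alpha\Gamma_\alpha(f,l)$. With the radii ordered appropriately one has $M_f = \Phi^{-1}(\{\delta\}\times D')$, $M_l = \mathcal L(X,\{0\}) = \Phi^{-1}(D''\times\{\eta\})$, and $M_{f,l} = \Phi^{-1}(\delta,\eta)$ is a common regular fibre of the two pencils
\[
  l\colon M_f \to D', \qquad f\colon M_l \to D''.
\]
Both are of complex Morse type, with stratified singularities exactly at the finitely many points of $M_f\cap\Gamma_\alpha(f,l)$, resp.\ $M_l\cap\Gamma_\alpha(f,l)$, and, as recalled above, stratified Morse theory presents $M_f$ and $M_l$ each as $M_{f,l}$ with one thimble attached per such point --- the thimble on a stratum $V^\alpha$ being a cone with attaching boundary $S^{d(\alpha)-1}(\mathcal L(X,V^\alpha))$, transported to $M_{f,l}$ along a chosen system of non-crossing paths in the base joining $(\delta,\eta)$ to the critical values. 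What remains is to compare these two presentations, and this is where L\^e's carrousel enters.

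Reading off the Newton--Puiseux data of the germ $(\Delta,0)$, the points of $\Delta\cap(\{u\}\times D')$ organise, for $0 < |u|\ll 1$, into nested ``clusters'' governed by the characteristic exponents, and L\^e's carrousel identifies the geometric monodromy of the family of vertical slices $\{u\}\times D'$, as $u$ turns once around $0$, with a composition of rotations of these clusters. I would use this to construct, inside $D\setminus\Delta$, an isotopy carrying the horizontal slice $D''\times\{\eta\}$ onto a suitable system of paths inside the vertical slice $\{\delta\}\times D'$, in a manner compatible with the stratified fibration $\Phi$. Lifting through $\Phi$, this realises $M_l$ up to homotopy as a subspace of $M_f$, assembled from $M_{f,l}$ out of a sub-collection of the very attaching data that builds $M_f$.

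It then remains to track the vanishing cycles through this isotopy. Because the clusters of critical values detected on $\{\delta\}\times D'$ and those detected on $D''\times\{\eta\}$ are governed by the same Puiseux data of $\Delta$, and because the carrousel monodromy is a product of cluster rotations, one finds that for each ``additional'' thimble $e$ needed to pass from $M_l$ to $M_f$ there is a thimble $e'$ already attached to $M_{f,l}$ inside $M_l$ for which the vanishing cycle $\partial e$, after the carrousel monodromy, is homotopic in $M_{f,l}$ to $\partial e'$. Since $e'$ is a cone, its attaching map $\partial e'$ --- hence also $\partial e$ --- retracts to a point inside $M_l$. This is precisely the assertion of the theorem; combined with the elementary fact that attaching a cone along a null-homotopic map produces a wedge with a suspension, it also yields the bouquet decomposition of Theorem \ref{thm:BouquetDecompositionHypersurface}, with $\mathcal L(X,\{0\}) = M_l$ as its ``base''.

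The main obstacle is exactly this last matching of vanishing cycles through the carrousel: one must control how the clusters of critical values on the vertical slice are ``seen'' from the horizontal slice, arrange the isotopy of the punctured bidisc so that it genuinely lifts through the stratified fibration $\Phi$ rather than over a single slice only, and accommodate the fact that the thimbles are cones on complex links rather than honest cells, so that ``attaching map'', ``boundary'' and ``retracts to a point'' have to be read in the appropriate homotopy-theoretic sense. The remaining ingredients --- local triviality of $\Phi$ away from $\Delta$, the complex Morse type of the two pencils, and the local Morse data identifying the thimbles --- are, as indicated above, standard consequences of Thom--Mather theory and of Goresky--MacPherson's stratified Morse theory.
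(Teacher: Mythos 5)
The paper does not actually prove this statement: it is quoted as Tib{\u a}r's Handlebody Theorem and attributed to \cite{Tibar95}, with the preceding paragraphs only assembling the ingredients (the map $\Phi = (f,l)$, the polar curves $\Gamma_\alpha(f,l)$, the discriminant $\Delta$, the identification of $M_f$, $M_l$ and $M_{f,l}$ as slices, and the role of the carrousel monodromy). Your outline follows exactly that route, so strategically you are aligned with the source the paper defers to.

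As a proof, however, your text has a gap at precisely the point that constitutes the theorem's content. The claim that for every additional thimble $e$ needed to pass from $M_l$ to $M_f$ there is a thimble $e'$ already attached to $M_{f,l}$ inside $M_l$ with $\partial e$ homotopic to $\partial e'$ --- and hence that each attaching boundary retracts to a point in $M_l$ --- is asserted in your sketch with ``one finds that'', while the steps that would establish it (constructing the carrousel isotopy of the punctured bidisc so that it lifts through the stratified fibration $\Phi$ globally and not merely slice by slice, matching the Puiseux clusters of $\Delta$ as seen from the vertical slice $\{\delta\}\times D'$ with those seen from the horizontal slice $D''\times\{\eta\}$, and tracking the vanishing cycles, which are cones on complex links rather than cells, through the cluster rotations) are the ones you yourself list as ``the main obstacle''. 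These are exactly where Tib{\u a}r's argument does its work, and none of them is carried out here. So the proposal is a correct roadmap, consistent with the paper's exposition, but to turn it into a proof you would either have to reproduce the carrousel analysis of \cite{Tibar95} in detail or simply cite that result, as the paper itself does.
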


While the topology of the space $M_l \cong \mathcal L(X, 0)$ remains difficult to 
compute in general, the Handlebody theorem and its build-up allow us to compute the 
number of thimbles as the difference 
\begin{equation}
  \label{eqn:ThimbleNumber}
  \mu(\alpha; f) = \mathrm{mult}_l(\Gamma_\alpha(f, l), 0) - \mathrm{mult}_f(\Gamma_\alpha(f, l), 0).
\end{equation}
For what follows we need to note that Tib{\u a}r's results have a natural extension to the 
zero-dimensional case:

\begin{proposition}
  \label{prp:TibarDimensionZero}
  Let $(X, 0) \subset (\CC^n, 0)$ be a reduced analytic curve and $f \colon (X, 0) \to (\CC, 0)$ a 
  holomorphic function which is not constant on any branch of $(X, 0)$.
  Then the Milnor fiber is a collection of points
  \[
    M_f \cong_{\mathrm ht} \left\{q_1, \dots, q_m\right\}
  \]
  the number of which is equal to the multiplicity $m = \mathrm{mult}_f(X, 0)$ 
  of $f$. 
  Similarly, for a sufficiently general linear form $l \colon (\CC^n, 0) \to (\CC, 0)$ 
  we find that 
  \[
    M_l \cong_{\mathrm ht} \left\{p_1, \dots, p_e\right\}
  \]
  with $e = \mult_l(X, 0)$ the multiplicity of the curve $(X, 0)$.
\end{proposition}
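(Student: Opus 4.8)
The plan is to reduce everything to the elementary local structure of a finite branched covering of a disc. Write $(X, 0) = \bigcup_{i=1}^r (X_i, 0)$ for the decomposition into irreducible branches and let $\nu_i \colon (\CC, 0) \to (X_i, 0)$ be the normalization (Puiseux parametrization) of each. Since $f$ is not constant on any branch and $f(0) = 0$, each pullback satisfies $f \circ \nu_i(t) = t^{m_i} u_i(t)$ with $u_i(0) \neq 0$ for some finite order $m_i \geq 1$; in particular $f$ is a non-zero-divisor on $\OO_{X, 0}$, so $\dim(V(f) \cap X) \leq 0$ and hence, as a germ, $V(f) \cap X = \{0\}$. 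Applying the snake lemma to multiplication by $f$ on the short exact sequence $0 \to \OO_{X,0} \to \bigoplus_{i=1}^r \OO_{\tilde X_i, 0} \to \bigoplus_{i=1}^r \OO_{\tilde X_i,0}/\OO_{X,0} \to 0$, and using that the cokernel module has finite length (so multiplication by $f$ on it has kernel and cokernel of equal length), one obtains
\[
  \mathrm{mult}_f(X, 0) = \dim_\CC \OO_{X,0}/(f) = \sum_{i=1}^r \dim_\CC \CC\{t\}/\bigl(t^{m_i}u_i(t)\bigr) = \sum_{i=1}^r m_i =: m .
\]

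Next I would fix a good representative. Because $0$ is isolated in $V(f) \cap X$, for $1 \gg \varepsilon \gg \delta > 0$ the restriction $f \colon X \cap B_\varepsilon \cap f^{-1}(D_\delta) \to D_\delta$ is finite and surjective; as a $1$-dimensional reduced germ is Cohen-Macaulay and $D_\delta$ is smooth of dimension one, this map is flat by miracle flatness, of degree $\dim_\CC \OO_{X,0}/(f) = m$. On the smooth locus, which near $0$ is a disjoint union of punctured discs parametrized by the $\nu_i$, the derivative of $f \circ \nu_i$ equals $t^{m_i - 1}\bigl(m_i u_i(t) + t u_i'(t)\bigr)$, which is nonzero for $0 < |t| \ll 1$; hence, after shrinking, $f$ has no critical point on $(X_{\mathrm{reg}} \cap B_\varepsilon) \setminus \{0\}$, so the only critical value of the covering is $0$. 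Therefore, for every $v \in D_\delta \setminus \{0\}$ the fiber $M_{f|(X,0)} = B_\varepsilon \cap X \cap f^{-1}(\{v\})$ consists of exactly $m$ distinct reduced points; being a finite discrete set, it is homotopy equivalent to the $m$-point space, which is the first assertion. That this fiber is contained in the open ball (i.e. avoids $\partial B_\varepsilon$) is the standard Milnor--L\^e set-up already invoked in the references cited above, so I would keep that part brief and refer there (e.g. \cite{HandbookVol1}).

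For the statement about $M_l$ I would simply observe that a sufficiently general linear form $l$ is a non-zero-divisor on each branch — one may choose $l$ so that $l \circ \nu_i$ vanishes to order exactly $\mathrm{mult}(X_i, 0)$ — hence $l$ is not constant on any branch and $\mathrm{mult}_l(X, 0) = \sum_i \mathrm{mult}(X_i, 0) = \mathrm{mult}(X, 0) = e$. Applying the previous paragraph with $f$ replaced by $l$ yields $M_l \cong_{\mathrm{ht}} \{p_1, \dots, p_e\}$, and this is by definition the complex link $\mathcal L(X, \{0\})$. The only genuinely delicate point is the construction of the good representative together with the claim that all $m$ preimages of a regular value lie inside $B_\varepsilon$; everything else is routine bookkeeping with the normalization, carried out exactly as in the classical one-dimensional Milnor fibration.
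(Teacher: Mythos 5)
Your argument is correct. Note, however, that the paper itself offers no proof of this proposition: it is stated as an evident extension of Tib{\u a}r's results to the zero-dimensional case, the implicit justification being the standard local theory of curve germs (a finite branched cover over a disc, with $\mathrm{mult}_f$ the intersection number $\dim_\CC \OO_{X,0}/(f)$). What you supply is a complete, self-contained substantiation of exactly that picture: the normalization computes $\dim_\CC \OO_{X,0}/(f) = \sum_i \operatorname{ord}_t(f\circ\nu_i)$ via the snake lemma (your observation that multiplication by $f$ on the finite-length module $\bigoplus_i \OO_{\tilde X_i,0}/\OO_{X,0}$ has kernel and cokernel of equal length is the right way to handle non-injectivity there), miracle flatness gives that the good representative $f\colon X\cap B_\varepsilon\cap f^{-1}(D_\delta)\to D_\delta$ is finite flat of degree $m$, and the derivative computation $\frac{d}{dt}(f\circ\nu_i) = t^{m_i-1}(m_i u_i + t u_i')$ shows the fibers over $v\neq 0$ are reduced, hence consist of exactly $m$ points; the case of a generic linear form $l$ then follows since $l\circ\nu_i$ vanishes to order $\mathrm{mult}(X_i,0)$, giving $e = \mathrm{mult}(X,0)$ and identifying $M_l$ with the complex link $\mathcal L(X,\{0\})$, consistent with how the paper uses the statement. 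The only points you flag as delegated (containment of the fiber in the open ball, choice of $1\gg\varepsilon\gg\delta>0$) are indeed the standard Milnor--L\^e setup and are legitimately cited rather than reproved. So the proposal fills in a proof the paper omits, by a route entirely compatible with the paper's intended reading of $\mathrm{mult}_f$ and $\mathrm{mult}_l$.
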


Note that $(X, 0)$ agrees with the relative polar curve $\Gamma(f,l)$ 
for the pair $(f, l)$ by 
definition. In particular, we obtain $M_f$ from $M_l$ by ``attaching'' 
\[
  \mathrm{mult}_l(\Gamma(f, l), 0) - \mathrm{mult}_f(\Gamma(f, l), 0)
\]
many points to it.

\subsection{Morsifications and homological decompositions}

Theorem \ref{thm:BouquetDecompositionHypersurface} has an immediate homological 
counterpart:
\begin{equation}
  \label{eqn:HomologicalBouquetDecomposition}
  \tilde H_k(M_f) \cong \tilde H_k\left(\mathcal L(X, \{0\})\right) \oplus
  \bigoplus_{\alpha \in A} \bigoplus_{i = 1}^{\mu(\alpha; f)} 
  H_{k-d(\alpha)+1}\left(C(\mathcal L(X, V^\alpha)), \mathcal L(X, V^\alpha)\right)
\end{equation}
Here we denote by $\tilde H$ the \textit{reduced homology} with integer coefficients. 

A similar homology decomposition can also be proved using Morsifications instead of the carrousel, 
see e.g. \cite[Proposition 1]{Zach22}. In this case one considers a perturbation 
\[
  f_t := f + t\cdot l \colon X \to \CC
\]
on a suitable representative $X$ of $(X, 0)$. Again, for a generic linear form 
$l$ and sufficiently small $|t| > 0$, the function $f_t$ has only stratified Morse singularities 
on the interior of $X$ and, hence, the whole space $X$ itself is obtained from 
the Milnor fiber $M_f$ by attaching a thimble $(C(\mathcal L(X, V^\alpha)), \mathcal L(X, V^\alpha))$ 
for every stratified Morse critical point of $f_t$ on $V^\alpha$. 
We denote the number of such Morse critical points by $\mu'(\alpha; f)$.

It can be shown that the relative homology of the pair $(X, M_f)$ splits as a direct sum 
with contributing summands supported at the critical points. As $X$ itself is contractible, 
we eventually find that 
\begin{equation}
  \label{eqn:HomologyDecompositionMorsification}
  \tilde H_k(M_f) \cong
  \bigoplus_{\alpha \in A} \bigoplus_{i = 1}^{\mu'(\alpha; f)} 
  H_{k-d(\alpha)+1}\left(C(\mathcal L(X, V^\alpha)), \mathcal L(X, V^\alpha)\right).
\end{equation}
Note that, if we suppose that $\{0\} =: V^0$ is a stratum, then $\mu'(0; f) = 1$ 
and the contribution to the homology of $M_f$ from that point is precisely 
$\tilde H_\bullet\left(\mathcal L(X, \{0\})\right)$.

\begin{example}
  \label{exp:RunningExampleMorsification}
  We continue with Example \ref{exp:RunningExampleIntroduction} and choose 
  \[
    l \colon \CC^{2\times 2}_{\sym} \to \CC, \quad 
    \begin{pmatrix}
      x & z \\ z & y
    \end{pmatrix}
    \mapsto x + y,
  \]
  so that the Morsification of $f$ becomes 
  \[
    f_t = f + t \cdot l \colon 
    \begin{pmatrix}
      x & z \\ z & y
    \end{pmatrix}
    \mapsto 
    (1+t) \cdot y - (x^{k-1} - t) \cdot x.
  \]
  A straightforward calculation reveals that for $t \neq 0$ one has 
  $k-1$ Morse critical points of $f_t$ on the stratum $V^1$ at the points 
  \[
    (x, y, z) = \left(\sqrt[k-1]{\frac{t}{k}}, 0, 0\right).
  \]
  This shows $\mu'(1; f) = k-1$ in accordance with the findings 
  in Example \ref{exp:RunningExampleIntroduction}. 
  We will see in Theorem \ref{thm:MorseNumbersAreEqualToThimbleNumbers} below that 
  $\mu'(\alpha; f) = \mu(\alpha; f)$ in general.
\end{example}

The contents of the following theorem have already been established by D. Massey 
in \cite{Massey96}. In \cite[Theorem 3.1]{MaximTibar23}, L. Maxim and M. Tib{\u a}r give a more 
direct proof. In order to be relatively self-contained, we briefly reproduce the argument here. 

\begin{theorem}
  \label{thm:MorseNumbersAreEqualToThimbleNumbers}
  In the setup of Theorem \ref{thm:BouquetDecompositionHypersurface}, let 
  \[
    f_t = f + t\cdot l : (X, 0) \to (\CC, 0)
  \]
  be a Morsification of $f$ for some sufficiently general linear form 
  $l$ and suppose that $\{0\} = V^0 \subset X$ is a stratum.
  %: (\CC^n, 0) \to (\CC, 0)$. 
  Then for $1 \gg |t| > 0$ sufficiently small and every $\alpha \in A$ 
  the number $\mu'(\alpha;f)$ of Morse critical points of $f_t$ on the stratum $V^\alpha$ is 
  equal to the number $\mu(\alpha; f)$ of summands for $\alpha$ in the Bouquet 
  decomposition in Theorem \ref{thm:BouquetDecompositionHypersurface}, or
  the decomposition in Proposition \ref{prp:TibarDimensionZero}, respectively.
\end{theorem}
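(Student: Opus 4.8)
The plan is to compare the two handle decompositions of the Milnor fiber $M_f$ that underlie Equations (\ref{eqn:HomologicalBouquetDecomposition}) and (\ref{eqn:HomologyDecompositionMorsification}): the carrousel-based one of Tib{\u a}r, which yields summands indexed by $\mu(\alpha; f)$, and the Morsification-based one, which yields summands indexed by $\mu'(\alpha; f)$. The core point is that \emph{both} numbers admit the same polar-multiplicity description, so I would first recall from Equation (\ref{eqn:ThimbleNumber}) that $\mu(\alpha; f) = \mathrm{mult}_l(\Gamma_\alpha(f,l),0) - \mathrm{mult}_f(\Gamma_\alpha(f,l),0)$, and then argue that the Morse critical points of $f_t = f + t\cdot l$ on a stratum $V^\alpha$ are, for $1 \gg |t| > 0$, in bijection with a suitable subset of the points of $M_f \cap \Gamma_\alpha(f,l)$ together with the points of $M_l \cap \Gamma_\alpha(f,l)$. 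Concretely: a point $x \in V^\alpha$ is a critical point of $f_t|_{V^\alpha}$ exactly when $\D f(x) + t\,\D l(x)$ vanishes on $T_x V^\alpha$, i.e. when $x$ lies on the polar variety $\Gamma_\alpha(f,l)$ and the value of the ratio $-\D f/\D l$ along $\Gamma_\alpha(f,l)$ equals $t$; intersecting the curve $\Gamma_\alpha(f,l)$ with this level condition and letting $t \to 0$, a standard count (e.g. via the degree of the map $\Gamma_\alpha(f,l) \to \CC$, $x \mapsto$ that ratio, on the relevant pieces) produces exactly $\mathrm{mult}_l(\Gamma_\alpha(f,l),0) - \mathrm{mult}_f(\Gamma_\alpha(f,l),0)$ critical points escaping from the origin. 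This identifies $\mu'(\alpha; f)$ with the right-hand side of (\ref{eqn:ThimbleNumber}), hence with $\mu(\alpha; f)$. The zero-dimensional case is handled identically using Proposition \ref{prp:TibarDimensionZero}, where $(X,0) = \Gamma(f,l)$ and the count $\mathrm{mult}_l - \mathrm{mult}_f$ is literally the number of points attached.

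An alternative, perhaps cleaner route — and the one I would actually write out — avoids recomputing multiplicities and instead plays the two homology decompositions (\ref{eqn:HomologicalBouquetDecomposition}) and (\ref{eqn:HomologyDecompositionMorsification}) directly against each other. Note first that the hypothesis $\{0\} = V^0$ is a stratum forces $\mu'(0; f) = 1$, and the summand it contributes in (\ref{eqn:HomologyDecompositionMorsification}) is $H_{\bullet}(C(\mathcal L(X,\{0\})), \mathcal L(X,\{0\})) = \tilde H_{\bullet}(\mathcal L(X,\{0\}))$, which matches the distinguished summand in (\ref{eqn:HomologicalBouquetDecomposition}); likewise, by Theorem \ref{thm:BouquetDecompositionHypersurface}, $\mu(0; f) = 0$. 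So both decompositions have the \emph{same} form: a copy of $\tilde H_\bullet(\mathcal L(X,\{0\}))$ plus, for each positive-dimensional stratum $V^\alpha$, a number of copies of $H_{\bullet - d(\alpha) + 1}(C(\mathcal L(X,V^\alpha)), \mathcal L(X,V^\alpha))$, namely $\mu(\alpha;f)$ copies in one and $\mu'(\alpha;f)$ copies in the other. To conclude $\mu(\alpha;f) = \mu'(\alpha;f)$ I would induct on $\dim V^\alpha$: for a stratum of maximal dimension $d$, only the strata of dimension $d$ contribute in degree $k = d$ (together with possibly $\tilde H_d(\mathcal L(X,\{0\}))$, which is the same term in both), and the groups $H_1(C(\mathcal L(X,V^\alpha)),\mathcal L(X,V^\alpha)) = \tilde H_0(\mathcal L(X,V^\alpha))$ for different top strata are summands that can be told apart — or, if one prefers to avoid distinguishing them, one passes to a single numerical invariant such as the rank in a fixed degree summed over a neighborhood retracting onto $\overline{V^\alpha}$. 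Shrinking the representative $X$ so that it retracts onto the closure of a single stratum localizes the count, giving the equality for that stratum; then one removes the contributions of dimension $d$ and repeats in dimension $d-1$, and so on.

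The main obstacle, and the point that requires genuine care rather than bookkeeping, is the \emph{localization} step: a priori the groups $H_{\bullet}(C(\mathcal L(X,V^\alpha)),\mathcal L(X,V^\alpha))$ attached to distinct strata of the same dimension need not be distinguishable from the abstract isomorphism type of $\tilde H_\bullet(M_f)$, so one cannot simply read off each $\mu(\alpha;f)$ from a single global homology group. The remedy — already implicit in the build-up to both Theorem \ref{thm:BouquetDecompositionHypersurface} and Equation (\ref{eqn:HomologyDecompositionMorsification}) — is that the handle attachments are geometrically localized at the polar points on $\Gamma_\alpha(f,l)$, so by choosing a representative of $(X,0)$ small enough to see only one stratum and a ball around one of its polar points, or equivalently by working with the relative homology of the Milnor fibration over a small arc encircling a single point of $\Delta_\alpha(f,l) \subset \Delta$, the contribution of that point is isolated. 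I would therefore phrase the argument as a comparison of the two constructions on the level of the local Milnor fibers of the map $\Phi = (f, l)$ over the disc $D$, where the equality of the number of thimbles is visible point-by-point on $\Delta$; the carrousel and the Morsification produce the same set of critical points of the second coordinate restricted to fibers, and the handlebody structures they induce agree over each arc. Once this local matching is set up, Massey's theorem (and the more direct argument of Maxim–Tib{\u a}r) is exactly the statement that summing over the points of $\Delta_\alpha(f,l)$ gives $\mu(\alpha;f) = \mu'(\alpha;f)$, and I would simply reproduce that local comparison in a couple of lines, citing \cite{Massey96} and \cite{MaximTibar23} for the full details.
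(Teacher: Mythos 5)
Your preferred route (the second one) is in fact the strategy of the paper's own proof: play the decompositions (\ref{eqn:HomologicalBouquetDecomposition}) and (\ref{eqn:HomologyDecompositionMorsification}) against each other and induct. But the two concrete mechanisms you rely on do not hold as stated. First, the degree bookkeeping: a Morse point on a stratum $V^\alpha$ contributes $H_{k-d(\alpha)+1}\left(C(\mathcal L(X,V^\alpha)),\mathcal L(X,V^\alpha)\right)$ to $\tilde H_k(M_f)$, so the top-dimensional strata are visible in degree $k=d-1$, not $k=d$ (in degree $d$ everything vanishes, since $M_f$ has the homotopy type of a complex of dimension $d-1$). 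More importantly, in that degree $d-1$ the lower-dimensional strata are \emph{not} silent: a stratum $V^\beta$ with $0<d(\beta)<d$ contributes $\tilde H_{d-1-d(\beta)}(\mathcal L(X,V^\beta))$, the top homology of its complex link, which is in general nonzero (think of a one-dimensional singular stratum of a threefold with nontrivial transversal singularity). So one cannot "read off" the top-stratum multiplicities from a single homological degree without already knowing $\mu(\beta;f)=\mu'(\beta;f)$ for the smaller strata; this is precisely why an induction is needed, and the paper runs it on $\dim(X,0)$. Second, the localization: "shrinking the representative $X$ so that it retracts onto the closure of a single stratum" is not an available move -- every representative of the germ contains all strata adjacent to $0$. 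The legitimate device, which the paper uses, is to \emph{replace} $(X,0)$ by the closed germ $(\overline{V^\alpha},0)$ with $f$ restricted to it, observing that both $\mu(\alpha;f)$ (defined via $\Gamma_\alpha(f,l)\subset\overline{V^\alpha}$, cf. (\ref{eqn:ThimbleNumber})) and $\mu'(\alpha;f)$ (Morse points of $f_t$ on $V^\alpha$) depend only on that closure; there one has a single top-dimensional stratum, the boundary strata are handled by the induction hypothesis, $\mu'(0;f)=1$ matches the $\mathcal L(\overline{V^\alpha},\{0\})$-summand, and comparing ranks in degree $d(\alpha)-1$ gives the equality.

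Your closing fallback -- localizing over the points of $\Delta_\alpha(f,l)$ and then "citing \cite{Massey96} and \cite{MaximTibar23} for the full details" -- is not a proof of the statement but a citation of it: those are exactly the references in which the theorem is established, and the whole point here is to reproduce a self-contained argument. Your first sketch (counting critical points of $f+t\cdot l$ along the branches of $\Gamma_\alpha(f,l)$ via the order of vanishing of the ratio of $\D f$ and $\D l$ on the tangent spaces) is a viable independent route, close in spirit to the Maxim--Tib{\u a}r argument, but as written it leaves the substantive points unverified: that for generic $l$ this ratio is holomorphic along each branch (no degeneration of $\D l$ on limiting tangent planes), that all critical points of $f_t$ on $V^\alpha$ inside the Milnor ball converge to the origin, and that the resulting count $\sum_{\textrm{branches}}\left(\mathrm{ord}(f\circ\gamma)-\mathrm{ord}(l\circ\gamma)\right)$ really matches the right-hand side of (\ref{eqn:ThimbleNumber}) with the conventions for $\mathrm{mult}_f$ and $\mathrm{mult}_l$ used there (compare Proposition \ref{prp:TibarDimensionZero}; the subscripts are easy to get crossed). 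In short: the overall strategy is the right one, but the single-degree separation of strata and the shrink-to-one-stratum localization both fail as stated, and the completion of the argument is outsourced to the very results being proved.
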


\begin{proof}
  We proceed by induction on the dimension $\dim(X, 0)$. For dimension $1$ 
  we consider the Milnor fiber 
  \[
    M_f = \{q_1,\dots, q_m\} = B_\varepsilon \cap X \cap f^{-1}(\{\delta\})
  \]
  for a Milnor ball of radius $\varepsilon > 0$ and a successively chosen 
  regular value $\delta \neq 0$. Perturbing $f$ slightly and passing to 
  $f_t$ does not change the topology of $M_f$ for $|t|$ small enough. 
  Hence $M_f \cong B_\varepsilon \cap X \cap f_t^{-1}(\{\delta\})$ and 
  $f_t$ has only stratified Morse singularities on the interior of 
  the curve $B_\varepsilon \cap X$.

  For the singularity at $\{0\} \subset X$, the boundary of the thimble 
  to be attached to $M_f$ is precisely the complex link $\mathcal L(X, \{0\})$, 
  as $\D f(0) = 0$ and $l$ is chosen to be linear and sufficiently generic. 
  We therefore have $\mu'(0; f) = 1$ and a contribution of 
  $\tilde H_0(\mathcal L(X, \{0\}))$ in (\ref{eqn:HomologyDecompositionMorsification})
  for this stratum. 

% All other strata $V^\alpha$ are one-dimensional. We may assume that $V^\alpha$ 
% is irreducible and replace $X$ by the closure $X = \overline{V^\alpha}$ of 
% that stratum. 

  All other strata $V^\alpha$ are one-dimensional. The boundary of any thimble 
  to be attached at a critical point of $f_t$ on such a stratum 
  consists of two points, so its reduced homology contributes a 
  free $\ZZ$-module of rank $1$ in homological degree zero. 
  Overall we find that, on the one hand,
  \[
    \ZZ^{m-1} \cong \tilde H_0(M_f) \cong 
    \underbrace{\tilde H_0(\mathcal L(X, \{0\}))}_{\alpha = 0} \oplus
    \bigoplus_{\alpha \in A \setminus\{0\}} \bigoplus_{i = 1}^{\mu'(\alpha; f)} 
    \ZZ^1.
  \]
  due to e.g. \cite[Proposition 1]{Zach22}. 
  On the other hand, the Bouquet decomposition, Proposition \ref{prp:TibarDimensionZero} 
  indicates that 
  \[
    \ZZ^{m-1} \cong \tilde H_0(M_f) \cong 
    \tilde H_0(\mathcal L(X, \{0\})) \oplus
    \bigoplus_{\alpha \in A \setminus\{0\}} \bigoplus_{i = 1}^{\mu(\alpha; f)} 
    \ZZ^1.
  \]
  Repeating the above considerations for $(X, 0) = (\overline{V^\alpha}, 0)$, 
  i.e. replacing $X$ by the closure of any one of its irreducible, one-dimensional strata, 
  it becomes clear that indeed $\mu(\alpha; f) = \mu'(\alpha; f)$ for every $\alpha \neq 0$. 

  Now suppose that the claim has been established for germs $(X', 0) \subset (\CC^n, 0)$ 
  of dimension $d' < d = \dim(X, 0)$ and let $V^\alpha$ be a top-dimensional, irreducible 
  stratum of $X$. Then $f$ has an isolated singularity in the stratified sense on 
  the closure of that stratum $(X^\alpha, 0) = (\overline V^\alpha, 0)$. 
  Let $A_{\leq \alpha}$ be the set of indices $\beta \in A$ such that 
  the stratum $V^\beta \subset \overline{V^\alpha}$ is equal or adjacent to $V^\alpha$.
  Then 
  \begin{eqnarray*}
    \tilde H_d(M_f^\alpha) & \cong & 
    \bigoplus_{\beta \in A_{\leq \alpha}} \bigoplus_{i = 1}^{\mu(\beta; f)}
    \tilde H_{d - d(\beta) + 1}(C(\mathcal L(X^\alpha, V^\beta)), \mathcal L(X^\alpha, V^\beta))\\
    & \cong & 
    \tilde H_d(\mathcal L(X^\alpha, \{0\})) \oplus \\
    & & 
    \bigoplus_{\beta \in A_{\leq \alpha}\setminus\{0\}} \bigoplus_{i = 1}^{\mu'(\beta; f)}
    \tilde H_{d - d(\beta) + 1}(C(\mathcal L(X^\alpha, V^\beta)), \mathcal L(X^\alpha, V^\beta))
  \end{eqnarray*}
  According to the 
  induction hypothesis, we have $\mu(\beta; f) = \mu'(\beta; f)$ for every $\beta \in A_{\leq \alpha}$
  with $\beta \notin \{0, \alpha\}$ and, moreover, $\mu'(0; f) = 1$ 
  with a contribution of $\tilde H_d(\mathcal L(X^\alpha, \{0\}))$ to the upper direct sum.
  Counting the rank of $\tilde H_d(M^\alpha_f)$ and 
  taking into account that for $\beta = \alpha$ we get a contribution 
  $\ZZ^{\mu'(\alpha; f)}$ in the first and $\ZZ^{\mu(\alpha; f)}$ in the second line, 
  we see that indeed $\mu'(\alpha; f) = \mu(\alpha; f)$ as desired. 
\end{proof}

\subsection{Generalization to complete intersections}

\begin{definition}
  \label{def:IsolatedStratifiedCompleteInteresectionSingularity}
  Let $(X, 0) \subset (\CC^n, 0)$ be a reduced complex analytic space endowed with 
  a complex analytic Whitney stratification $X = \bigcup_{\alpha \in A} V^\alpha$.
  We say that a holomorphic function 
  \[
    f \colon (\CC^n, 0) \to (\CC^k, 0)
  \]
  has an \textit{isolated singularity in the stratified sense} on $X$, if there exists an 
  open neighborhood $0 \in U \subset \CC^n$ such that at every point 
  $0 \neq x \in U \cap X \cap f^{-1}(\{0\})$ we have 
  \[
    \D f(x) \neq 0
  \]
  on the tangent space $T_x V^\alpha$ at $x$ of the stratum $V^\alpha$ containing the point.
\end{definition}

The generalization of Tib{\u a}r's bouquet decomposition to such a function 
has already been alluded to in \cite[Section 4.3]{Tibar95}. A detailed proof for the technique 
was given in \cite{Zach18BouquetDecomp} for the special case where $(X, 0)$ 
is a product of a generic determinantal variety with some affine space. 

\begin{theorem}
  \label{thm:BouquetDecompositionsCompleteIntersections}
  In the setup of Definition \ref{def:IsolatedStratifiedCompleteInteresectionSingularity}
  the Milnor fiber admits a decomposition 
  \[
    M_{f|(X, 0)} \cong_{\mathrm{ht}} \mathcal L^{k-1}(X, \{0\}) \vee 
    \bigvee_{\alpha \in A \setminus \{0\}} \bigvee_{i = 1}^{\mu(\alpha; f)}
    S^{d(\alpha) - k + 1}(\mathcal L(X, V^\alpha))
  \]
  for some numbers $\mu(\alpha; f)$ with $\mu(\alpha; f) = 0$ for $d(\alpha) = \dim V^\alpha < k$. 
\end{theorem}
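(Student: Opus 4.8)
The plan is to reduce the complete intersection case $k \geq 1$ to the hypersurface case $k = 1$ already handled by Tib{\u a}r (Theorem \ref{thm:BouquetDecompositionHypersurface}) and its zero-dimensional companion (Proposition \ref{prp:TibarDimensionZero}), by an induction on $k$ that strips off one coordinate function of $f = (f_1, \dots, f_k)$ at a time. The key observation is that if $f$ has an isolated singularity in the stratified sense on $(X, 0)$, then for a generic choice of the last coordinate — or after a generic linear change of coordinates in $(\CC^k, 0)$ — the partial map $g = (f_1, \dots, f_{k-1}) \colon (\CC^n, 0) \to (\CC^{k-1}, 0)$ again has an isolated singularity in the stratified sense on $(X, 0)$, and moreover on the space $X' := X \cap g^{-1}(\{0\})$, equipped with the stratification induced by the $V^\alpha$, the last function $f_k$ has an isolated singularity in the stratified sense. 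One should first verify these genericity statements by a transversality argument on the jet level: the conditions "$\D g(x)$ surjective onto $(\CC^{k-1})^*$ restricted to $T_x V^\alpha$" and "$\D f_k(x) \neq 0$ on $T_x(V^\alpha \cap g^{-1}(\{0\}))$" are Zariski-open in the choice of the linear change of coordinates, and are non-empty by the original isolated-singularity hypothesis on $f$.

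Next I would set up the geometry of the Milnor-L\^e fibration for $f$ in a way compatible with this splitting. Over $1 \gg \varepsilon \gg \delta_1 \gg \dots \gg \delta_k > 0$ one has, for a generic value $v = (v_1, \dots, v_k)$, that
\[
  M_{f|(X,0)} = B_\varepsilon \cap X \cap f^{-1}(\{v\})
  = \big( B_\varepsilon \cap X \cap g^{-1}(\{(v_1,\dots,v_{k-1})\}) \big) \cap f_k^{-1}(\{v_k\}).
\]
The bracketed space is, by the inductive hypothesis applied to $g$ on $(X,0)$, homotopy equivalent to a bouquet
\[
  \mathcal L^{k-2}(X, \{0\}) \vee \bigvee_{\alpha \neq 0} \bigvee_{i=1}^{\mu(\alpha; g)} S^{d(\alpha)-k+2}(\mathcal L(X, V^\alpha)),
\]
but more importantly it is (up to homeomorphism, for the generic value) the \emph{total space} of the Milnor fiber of $f_k$ restricted to the stratified space $X' = X \cap g^{-1}(\{(v_1,\dots,v_{k-1})\})$, which near its singular locus is stratified-homeomorphic to $X \cap g^{-1}(\{0\})$. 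The strata of $X'$ are the $V^\alpha \cap g^{-1}(\{\text{value}\})$ of dimension $d(\alpha) - (k-1)$, and their complex links inside $X'$ are canonically identified with the complex links $\mathcal L(X, V^\alpha)$ of the original strata, since the complex link of a Whitney stratum is unchanged under intersecting with a generic linear slice transverse to that stratum — and $g^{-1}(\text{value})$ is, in the relevant directions, such a slice by the genericity arrangement. Applying Theorem \ref{thm:BouquetDecompositionHypersurface} (or Proposition \ref{prp:TibarDimensionZero} when $\dim X' = 0$) to $f_k$ on $X'$ then produces
\[
  M_{f|(X,0)} \cong_{\mathrm{ht}} \mathcal L(X', \{0\}) \vee \bigvee_{\beta} \bigvee_{i=1}^{\mu(\beta; f_k)} S^{d'(\beta)}(\mathcal L(X', V^\beta)),
\]
and one rewrites $\mathcal L(X', \{0\}) \cong_{\mathrm{ht}} \mathcal L^{k-1}(X, \{0\})$ (iterated complex link = repeated slicing), $d'(\beta) = d(\alpha) - k + 1$, $\mathcal L(X', V^\beta) = \mathcal L(X, V^\alpha)$, and \emph{defines} $\mu(\alpha; f) := \mu(\beta; f_k)$. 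The vanishing $\mu(\alpha; f) = 0$ for $d(\alpha) < k$ falls out because such strata meet $g^{-1}(\text{generic value})$ either emptily or in dimension $< 1$, where no thimbles are attached.

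The main obstacle I anticipate is the careful identification of complex links across the slicing: one must show that $\mathcal L(X \cap g^{-1}(\{0\}), V^\alpha \cap g^{-1}(\{0\})) \cong \mathcal L(X, V^\alpha)$ as homotopy types, which requires that the map $g$, restricted to a normal slice to $V^\alpha$, behaves like a \emph{generic} linear projection onto $\CC^{k-1}$ — and this genericity must be arranged simultaneously for all strata $\alpha$, compatibly with the choice already forced by the isolated-singularity condition on $f_k|_{X'}$. This is where the Whitney conditions do the work: transversality of $g^{-1}(\text{value})$ to all strata, together with the local triviality of a Whitney stratification along a stratum, ensures the complex link is computed after slicing. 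A secondary technical point is checking that the "parameters" $\delta_1 \gg \dots \gg \delta_k$ can indeed be chosen so that all the fibrations involved are simultaneously realized — this is the standard "nested Milnor ball/polydisc" argument, and one can cite the discussion in \cite{Tibar95} and \cite{HandbookVol1} rather than redo it. I would also remark, as in \cite[Section 4.3]{Tibar95} and \cite{Zach18BouquetDecomp}, that the induction base $k=1$ is exactly Theorem \ref{thm:BouquetDecompositionHypersurface} together with Proposition \ref{prp:TibarDimensionZero}, so no new topological input beyond those results and standard stratification theory is needed.
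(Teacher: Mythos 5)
Your overall strategy (induct on $k$, slice off the first $k-1$ components using the genericity provided by Lemma \ref{lem:RegularValueEtc}, apply the hypersurface Bouquet Theorem to $f_k$ on the slice, and identify the complex links of strata meeting the slice transversally) is the same as the paper's first step, and those parts are fine. The genuine gap is your identification $\mathcal L(X', \{0\}) \cong_{\mathrm{ht}} \mathcal L^{k-1}(X, \{0\})$ for $X' = X \cap \{f_1 = \dots = f_{k-1} = 0\}$, together with the ensuing definition $\mu(\alpha; f) := \mu(\beta; f_k)$. The codimension-$(k-1)$ complex link of $(X,0)$ is cut out by \emph{generic} functions, whereas $f_1, \dots, f_{k-1}$ are only required to be part of a map with isolated stratified singularity; no linear change of coordinates in the target $(\CC^k,0)$ can make them generic (for $k=1$ there is no choice at all, and the same rigidity persists at the origin for $k>1$). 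Concretely, take $X = \CC^2$ with strata $\{0\}$ and $\CC^2 \setminus \{0\}$ and $f = (x^2 - y^2, \; xy)$: then $X' = \{x^2 = y^2\}$ is two lines, so $\mathcal L(X', \{0\})$ consists of two points, while $\mathcal L^{1}(\CC^2, \{0\})$ is a disc, hence contractible. The Milnor fiber $M_f$ has $4$ points, but your recipe yields $\mathcal L^1(\CC^2,\{0\}) \vee S^0 \vee S^0$, i.e.\ $3$ points; the missing $S^0$ is exactly the contribution of $f_1$ that your count discards.

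The paper closes precisely this gap by \emph{not} stopping after one slicing step: it observes that $\mathcal L(Z, \{0\})$ (your $\mathcal L(X',\{0\})$) is the Milnor fiber of a generic linear form $l$ on $Z$, hence equal to the Milnor fiber of $\tilde f = (f_1,\dots,f_{k-1})$ restricted to $X \cap l^{-1}(\{0\})$, and then applies the induction hypothesis to $\tilde f$ on this new germ. Each of the $k$ induction steps contributes further wedge summands of $S^{d(\alpha)-k+1}(\mathcal L(X, V^\alpha))$, so the numbers $\mu(\alpha;f)$ are accumulated as a sum of $k$ terms — this is exactly Corollary \ref{cor:MilnorNumbersCompleteIntersections}, and in the example above it gives $2 + 1 = 3$ as it should. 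Note also that in your write-up the induction hypothesis applied to $g$ on $(X,0)$ is stated but never actually used, which is a symptom of the same issue: the inductive step has to do its work on the leftover link term $\mathcal L(Z,\{0\})$, not on $M_g$.
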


Here $\mathcal L^{k-1}(X, \{0\})$ denotes the \textit{complex link of codimension} $k-1$, 
cf. \cite{Zach21Links}. 

\begin{example}
  \label{exp:RunningExampleCompleteIntersection}
  The symmetric matrices $\CC^{2\times 2}_{\sym}$ appear as a hyperplane section of 
  the full $2\times 2$-matrices 
  \[
    \CC^{2\times 2}_\sym = \left\{
      \begin{pmatrix}
        x & w \\ z & y
      \end{pmatrix}
      \in \CC^{2\times 2} : 
      w - z = 0
    \right\}
    \subset \CC^{2\times 2}.
  \]
  Therefore, the singularity given by the function $f = y-x^k$ from 
  Examples \ref{exp:RunningExampleMorsification} and \ref{exp:RunningExampleIntroduction}
  can be given the structure of an isolated singularity for 
  \[
    g \colon (\CC^{2\times 2}, 0) \to (\CC^2, 0), \quad 
    \begin{pmatrix}
      x & w \\ z & y
    \end{pmatrix}
    \mapsto 
    \left(z - w, y - x^k\right)
  \]
  and its restriction to $X = \{ \varphi \in \CC^{2\times 2} : \det \varphi = 0\}$, 
  again considered with its rank stratification. 

  The critical locus of $g$ on $V^1 = \{\varphi \in \CC^{2\times 2} : \rank \varphi = 1\}$ 
  is given by the zero locus of the equations 
  \[
    w + z = y + k\cdot x^k = 0.
  \]
  Together with the equation $xy-zw = 0$ for the determinant, this 
  defines a curve $C \subset \CC^{2\times 2}$. 
  The geometric intersection $C \cap g^{-1}(\{0\})$ with the central 
  fiber of $g$, however, consists only of the origin, so that the requirements 
  for $g|(X, 0)$ to have an isolated singularity in the stratified sense 
  in Definition \ref{def:IsolatedStratifiedCompleteInteresectionSingularity}
  are met.

  It is evident from the simplicity of the construction of this example 
  that again $M_{g|(X, 0)} \cong_{\mathrm{ht}} \bigvee_{j=1}^k S^1$, 
  $\mathcal L^1(X, \{0\}) \cong_{\mathrm{ht}} S^1$, and therefore also
  $\mu(1;g) = k-1$.
\end{example}

Before we can prove Theorem \ref{thm:BouquetDecompositionsCompleteIntersections}, 
we need the following adaptation of a well known lemma in the context of 
ICIS.

\begin{lemma}
  \label{lem:RegularValueEtc}
  Let $\mathbb P = \mathbb P(\CC^k)$ be the space of lines in the codomain 
  of $f$ and let $L \in \mathbb P$ be a regular value of the map 
  \[
    \mathbb P f \colon X \setminus f^{-1}(\{0\}) \to \mathbb P, \quad x \mapsto [f(x)].
  \]
  Then the function 
  \[
    f' \colon (\CC^n, 0) \to (\CC^k/L, 0) \cong (\CC^{k-1}, 0), \quad x \mapsto f(x) + L
  \]
  has an isolated singularity on $(X, 0)$ in the stratified sense. 

  Suppose that $\{0\} = V^0$ is a stratum of $X$. 
  Then the stratification on the space $Z = f^{-1}(L) \subset X$ induced by 
  $\{Z \cap V^\alpha\}_{\alpha \in A}$ is Whitney regular and the restriction
  \[
    f|Z \colon (Z, 0) \to (L, 0)
  \]
  also has an isolated singularity in the stratified sense. 
\end{lemma}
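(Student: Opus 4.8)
The plan is to fix once and for all a linear splitting $\CC^k = L \oplus W$ and to write $f = (g, f')$, where $g \colon (\CC^n, 0) \to (L, 0) \cong (\CC, 0)$ is the $L$-component and $f' \colon (\CC^n, 0) \to (W, 0) \cong (\CC^{k-1}, 0)$ is the $W$-component; under the identification $\CC^k/L \cong W$ this $f'$ is precisely the germ in the statement, and under $L \cong \CC$ the restriction $f|_Z$ becomes $g|_Z$. The only bookkeeping needed is that
\[
  f'^{-1}(\{0\}) \cap X = \{x \in X : f(x) \in L\} = Z ,
\]
so that each $Z \cap V^\alpha$ splits into the ``special'' part $f^{-1}(\{0\}) \cap V^\alpha$ and the ``generic'' part $(\mathbb{P}f|_{V^\alpha})^{-1}(L)$. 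The form $L$ will be chosen generically: by Sard's theorem applied to the restrictions $\mathbb{P}f|_{V^\alpha \setminus f^{-1}(0)}$ together with the classical genericity theorems for sections of Whitney stratified germs (applied along the graph of $\mathbb{P}f$; see e.g. \cite{GoreskyMacPherson88}), a dense open set of $L \in \mathbb{P}$ is simultaneously a regular value of every $\mathbb{P}f|_{V^\alpha \setminus f^{-1}(0)}$, makes $Z = f^{-1}(L)$ a Whitney section along $f^{-1}(\{0\})$, and makes $Z$ transverse to all sufficiently small spheres about $0$; we fix such an $L$.

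The two isolated-singularity assertions then follow from a single pointwise differential computation. Let $0 \neq x \in X$ be a small point with $f'(x) = 0$, lying on a stratum $V^\alpha$. If $f(x) \neq 0$, then $f(x) \in L \setminus \{0\}$, so $x \in (\mathbb{P}f)^{-1}(L)$; in the affine chart of $\mathbb{P}$ around $L$ in which a nearby line is the graph of a homomorphism $L \to W$, the map $\mathbb{P}f$ is represented by $x \mapsto g(x)^{-1}\cdot f'(x)$, whose differential at our point is $g(x)^{-1}\cdot \D f'(x)$ because $f'(x) = 0$ and $g(x) \neq 0$; as $L$ is a regular value, this, and hence $\D f'(x)$, is surjective on $T_x V^\alpha$. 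If $f(x) = 0$, then by the isolated-singularity hypothesis on $f$ the differential $\D f(x)|_{T_x V^\alpha}$ is surjective onto $\CC^k$ (for $k = 1$ this is exactly the condition $\D f(x) \neq 0$ of Definition \ref{def:IsolatedStratifiedCompleteInteresectionSingularity}), so its image is not contained in the line $L$ and post-composing with $\CC^k \to \CC^k/L$ keeps it nonzero; thus $\D f'(x) \neq 0$ on $T_x V^\alpha$, which proves the first assertion. For the last assertion, note that at such an $x$ the set $Z \cap V^\alpha$ is smooth with tangent space $\ker(\D f'(x)|_{T_x V^\alpha})$, and a one-line linear-algebra check shows that $\D g(x)$ restricted to this kernel is nonzero precisely when $L \subseteq \im \D f(x)|_{T_x V^\alpha}$, which again holds because $\D f(x)|_{T_x V^\alpha}$ is surjective; hence $g|_Z = f|_Z$ has an isolated singularity in the stratified sense on $(Z, 0)$.

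It remains to establish Whitney regularity of $Z$ with the stratification $\{Z \cap V^\alpha\}$. Away from the origin each $Z \cap V^\alpha \setminus \{0\}$ is, near its points, the zero set of the submersion $f'|_{V^\alpha}$ — submersive at points of $(\mathbb{P}f)^{-1}(L)$ because $L$ is a regular value, and at points of $f^{-1}(\{0\})$ because $\D f|_{V^\alpha}$ is surjective there — hence a smooth submanifold of $V^\alpha$; and along $(\mathbb{P}f)^{-1}(L)$ the Whitney conditions are inherited from $X \setminus f^{-1}(\{0\})$ by the standard stability of Whitney regularity under transverse pull-back, $\mathbb{P}f$ being transverse to the point $L$. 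The genuinely delicate point — which is where the generic choice of $L$ really matters and which I expect to be the main obstacle — is the Whitney condition between a stratum $f^{-1}(\{0\}) \cap V^\beta$ lying in the central fibre and an adjacent stratum $(\mathbb{P}f|_{V^\alpha})^{-1}(L)$; this amounts to $f^{-1}(L)$ being a sufficiently general member of the stratified family $\{f^{-1}(t)\}_t$, and I would deduce it from the classical genericity theorems for sections of Whitney stratified germs applied to the graph of $\mathbb{P}f$ (equivalently, on the blow-up of $X$ along $f^{-1}(\{0\})$), which is precisely the step in which the ICIS prototype of this lemma is adapted. Finally, since $\{0\} = V^0$ is a stratum of $X$, we adjoin it as the unique zero-dimensional stratum of $Z$: condition (a) is automatic at an isolated point, and condition (b) for each pair $(Z \cap V^\alpha, \{0\})$ is part of the same genericity package for linear-type sections of Whitney stratified germs. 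Together with the two isolated-singularity statements above, this completes the argument.
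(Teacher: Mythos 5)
Your two pointwise computations (surjectivity of $\D f'$ on $T_xV^\alpha$ at points of $Z$ off and on the central fibre, and the kernel argument showing $\D(f|Z)\neq 0$ at central-fibre points) are correct and are essentially the paper's argument, with the chart computation for $\mathbb{P}f$ spelled out. The problem is the Whitney-regularity part, which you do not actually prove: you declare the regularity at points of $Z$ lying on $f^{-1}(\{0\})$ to be the ``genuinely delicate'' main obstacle, defer it to unspecified genericity theorems for sections of stratified germs, and --- crucially --- re-choose $L$ from a dense open subset of $\mathbb{P}$. The lemma assumes only that $L$ is a regular value of $\mathbb{P}f$, so strengthening the choice of $L$ changes the statement, and the deferral leaves the step unestablished. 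Moreover the step is not delicate, and your own first computation already contains what is needed: at \emph{every} point $0\neq x\in Z$ near the origin the restriction of $f'$ (equivalently of $h=(f_1,\dots,f_{k-1})$ after putting $L$ on the $y_k$-axis) to the stratum through $x$ is submersive --- at central-fibre points because $\D f$ has full rank on $T_xV^\alpha$ by the isolated-singularity hypothesis of Definition \ref{def:IsolatedStratifiedCompleteInteresectionSingularity}, and elsewhere because $L$ is a regular value of $\mathbb{P}f$. Hence $Z\setminus\{0\}$ is a transverse stratified section of $X\setminus\{0\}$ and inherits Whitney regularity for the induced strata $Z\cap V^\alpha$ by the standard transverse-intersection lemma, uniformly and with no extra genericity of $L$. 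This is exactly the paper's one-line argument. Note also that $f^{-1}(\{0\})\cap V^\alpha$ and $(\mathbb{P}f|_{V^\alpha})^{-1}(L)$ are \emph{not} separate strata of the induced stratification, so there is no mixed pair to check; only the pair along the point stratum $\{0\}$ remains, and there the paper is as terse as you are, so I do not count that against you.

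A smaller discrepancy: for the final claim the paper proves more than your central-fibre computation. Using the Curve Selection Lemma it shows that stratified critical points of $f|Z$ \emph{outside} the central fibre cannot accumulate at $0$ (along a path $\gamma$ in the critical locus one gets $f(\gamma(t))=\int_0^t \D f(\gamma(s))\cdot\dot\gamma(s)\,\operatorname{d}\!s=0$, a contradiction), so $0$ is an isolated stratified critical point of $f$ on $Z$, not merely isolated within $Z\cap f^{-1}(\{0\})$. Read literally against Definition \ref{def:IsolatedStratifiedCompleteInteresectionSingularity} your weaker statement suffices, but you should at least flag the stronger fact, since it is what the paper records and uses.
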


\begin{proof}
  Without loss of generality, we may assume that $L$ is the $y_k$-axis in $\CC^k$ 
  so that for $h = (f_1,\dots, f_{k-1})$ we have $Z = h^{-1}(\{0\})$.
  We claim that $0 \in \CC^{k-1}$ is a regular value of $h$ on $X\setminus\{0\}$. 

  Indeed, suppose that $0 \neq x\in Z \subset X$ was an arbitrary point. For 
  $x \notin f^{-1}(\{0\})$ there is nothing to show. If $x \in f^{-1}(\{0\})$
  happens to lay on the central fiber, 
  then $\D f(x)$ has full rank on the tangent space $T_x V^\alpha$ of the stratum of $X$ 
  containing $x$. In particular this implies that also $h = (f_1, \dots, f_{k-1})$ must 
  have full rank there. 

  It follows that $(Z \setminus \{0\}) \subset (X \setminus \{0\})$ inherits the Whitney 
  stratification from $X$. This can be extended by the one-point-stratum $V^0 = \{0\}$ 
  to a full Whitney stratification of the germ $(Z, 0)$. 

  It is already clear from the construction that $f|Z$ does not have a critical point 
  on the central fiber outside the origin. 
  For points in $Z \setminus f^{-1}(\{0\})$ let $C \subset Z$ be the closure of the 
  stratified critical locus of $f|Z$.
  If $0 \in C$ was in the closure of critical points, then, according to the 
  Curve Selection Lemma, we can choose a real analytic path $\gamma \colon ([0, \varepsilon), 0) \to (C, 0)$
  with $\gamma(t) \notin f^{-1}(\{0\})$ for $t \neq 0$. 
  But then 
  \[
    f(\gamma(t)) = f(\gamma(t)) - f(0) = 
    \int_0^t \D f(\gamma(t)) \cdot \dot \gamma(t) \D t = \int_0^t 0 \D t = 0,
  \]
  a contradiction for $t \neq 0$. 
  Hence $0 \in Z$ must be an isolated critical point of $f$ on $Z$.
\end{proof}

\begin{proof}(of Theorem \ref{thm:BouquetDecompositionsCompleteIntersections})
  We prove the theorem by induction on the dimension $k$ of the codomain of $f$. 
  Choose $L$ as in Lemma \ref{lem:RegularValueEtc} so that 
  \[
    f|Z \colon (f^{-1}(L), 0) \to (L, 0)
  \]
  has an isolated singularity at $0$ on $Z = f^{-1}(L)$ in the sense of Lemma 
  \ref{lem:RegularValueEtc}.
  We may apply Theorem \ref{thm:BouquetDecompositionHypersurface} and obtain 
  a decomposition 
  \[
    M_{f|Z} \cong \mathcal L(Z, \{0\}) \vee 
    \bigvee_{\alpha \in A} \bigvee_{i = 1}^{\mu(\alpha; f|Z)} S^{d(\alpha) - k + 1}(\mathcal L(Z, Z\cap V^\alpha)).
  \]
  In particular $\mu(\alpha; f|Z) = 0$ whenever $d(\alpha) = \dim V^\alpha < k$ 
  so that the space $V^\alpha \cap Z$ is $V^0 = \{0\}$ or empty. 
  Furthermore, $\mathcal L(Z, Z\cap V^\alpha) = \mathcal L(X, V^\alpha)$ 
  since on all strata $V^\alpha$ meeting $Z$ non-trivially, the intersection 
  is transversal. 

  The complex link $\mathcal L(Z, \{0\})$ is the Milnor fiber of a generic 
  linear form $l \colon (Z, 0) \to (\CC, 0)$. If we assume that $L$ 
  was the $y_k$-axis, then this naturally agrees with the Milnor fiber
  of 
  \[
    h = (f_1,\dots, f_{k-1}, l) \colon (X, 0) \to (\CC^{k-1} \oplus \CC^1, 0)
  \]
  and, hence, also the Milnor fiber of 
  \[
    \tilde f = (f_1,\dots, f_{k-1}) \colon (X \cap l^{-1}(\{0\}), 0) \to (\CC^{k-1}, 0)
  \]
  We may therefore proceed by induction with $\tilde f$ in place of $f$. 
\end{proof}

\begin{corollary}
  \label{cor:MilnorNumbersCompleteIntersections}
  For a sufficiently generic choice of coordinates $(y_1,\dots, y_k)$ of the codomain 
  of $f$ and linear forms $l_1,\dots, l_k \colon \CC^n \to \CC$, 
  the numbers $\mu(\alpha; f)$ in Theorem \ref{thm:BouquetDecompositionsCompleteIntersections} 
  can be computed as 
  \begin{equation}
    \label{eqn:MilnorNumbersCompleteIntersections}
    \mu(\alpha; f) = \sum_{j = 1}^k \mu\left(\alpha; f_j|(X \cap \{f_{1} = \dots = f_{j-1} = 0\} \cap \{l_k = \dots = l_{j+1} = 0\})\right).
  \end{equation}
\end{corollary}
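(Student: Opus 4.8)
The plan is to extract the formula directly from the inductive argument used to prove Theorem~\ref{thm:BouquetDecompositionsCompleteIntersections}, since that proof already exhibits the Milnor fiber of $f$ as built from a tower of hypersurface-type Milnor fibers and the numbers $\mu(\alpha; f)$ are determined by the bouquet decomposition. First I would fix a generic line $L = $ the $y_k$-axis in the codomain as in Lemma~\ref{lem:RegularValueEtc}, and a sufficiently generic linear form $l_k$, so that the proof of Theorem~\ref{thm:BouquetDecompositionsCompleteIntersections} identifies $\mathcal L^{k-1}(X,\{0\})$ with the Milnor fiber of $\tilde f = (f_1,\dots,f_{k-1})$ on $X \cap \{l_k = 0\}$, and simultaneously identifies, for each $\alpha$ with $d(\alpha) \geq k$, the number $\mu(\alpha;f)$ with $\mu(\alpha; f_k|Z)$ where $Z = f^{-1}(L) = \{f_1 = \dots = f_{k-1} = 0\} \cap X$. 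In other words, a single step of the induction peels off exactly the summand $\mu(\alpha; f_k|(X \cap \{f_1 = \dots = f_{k-1} = 0\}))$ corresponding to the last index $j = k$ in the claimed sum --- after one notices that the complex link $\mathcal L(Z, Z\cap V^\alpha)$ equals $\mathcal L(X, V^\alpha)$ by the transversality observed in that proof, so the thimble exponents match up and the $\mu(\alpha; \cdot)$ on $Z$ really is the $\mu(\alpha; f)$ we want.

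Next I would iterate. Applying the same peeling to $\tilde f = (f_1,\dots,f_{k-1})$ on $X \cap \{l_k = 0\}$ produces the term $j = k-1$, namely $\mu(\alpha; f_{k-1}|(X \cap \{f_1 = \dots = f_{k-2} = 0\} \cap \{l_k = 0\}))$, together with a further reduction to $(f_1,\dots,f_{k-2})$ on $X \cap \{l_k = l_{k-1} = 0\}$. Continuing down to $j = 1$, where the base case of Theorem~\ref{thm:BouquetDecompositionHypersurface} (or Proposition~\ref{prp:TibarDimensionZero} in the zero-dimensional situation) applies, gives exactly the $k$ summands on the right-hand side of Equation~(\ref{eqn:MilnorNumbersCompleteIntersections}), with the linear cuts $\{l_k = \dots = l_{j+1} = 0\}$ accumulating as advertised. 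The bookkeeping amounts to tracking, at each stage of the induction, which space the hypersurface function lives on and which $\alpha$ still contributes (those with $d(\alpha) \geq j$, the lower strata dropping out as in the vanishing clause $\mu(\alpha;f) = 0$ for $d(\alpha) < k$).

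The one point requiring care --- and what I expect to be the main obstacle --- is the genericity hypothesis and its stability under the recursion: at each step we must choose the coordinate $y_j$ (equivalently the line being modded out) to be a regular value of the projectivized map as in Lemma~\ref{lem:RegularValueEtc}, and the linear form $l_j$ generic for the Morsification/polar-curve picture, all on the space $X \cap \{f_1 = \dots = f_{j-1} = 0\} \cap \{l_k = \dots = l_{j+1} = 0\}$ obtained from the previous steps. One must check that these are open dense conditions on the finitely many choices involved and that a single sufficiently generic choice of $(y_1,\dots,y_k)$ and $(l_1,\dots,l_k)$ made at the outset simultaneously satisfies all of them; this is where the phrase ``sufficiently generic'' in the statement does its work. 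Since each condition is the complement of a proper analytic subset (regular values form a dense set by Sard, and the polar-curve genericity conditions in \cite{Tibar95} are Zariski-open), a finite intersection of such conditions is again dense, so the choice exists. Finally I would remark that the intermediate restrictions $f_j|(X \cap \dots)$ indeed have isolated stratified singularities, which follows inductively from Lemma~\ref{lem:RegularValueEtc} applied repeatedly, so that each $\mu(\alpha; f_j | \cdot)$ on the right-hand side is well-defined in the sense of Theorem~\ref{thm:BouquetDecompositionHypersurface}.
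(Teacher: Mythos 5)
Your proposal is correct and follows essentially the route the paper intends: the corollary is exactly the bookkeeping of the inductive proof of Theorem \ref{thm:BouquetDecompositionsCompleteIntersections}, peeling off one hypersurface-stage multiplicity per step via Lemma \ref{lem:RegularValueEtc}, identifying complex links through transversality, and accumulating the generic linear slices $\{l_k = \dots = l_{j+1} = 0\}$ (the paper offers no separate proof precisely because the numbers arise this way). Just phrase the first step additively — the induction identifies $\mu(\alpha; f_k|Z)$ as the $j=k$ \emph{summand} of $\mu(\alpha;f)$, not as $\mu(\alpha;f)$ itself — which is what your subsequent iteration in fact uses.
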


\begin{example}
  \label{exp:RunningExampleMilnorNumbers}
  In Example \ref{exp:RunningExampleCompleteIntersection} we have two summands 
  in (\ref{eqn:MilnorNumbersCompleteIntersections}) for $f_1 = l_1 = z - w$ 
  and $f_2 = y - z^k$. For the second linear form, we may again choose $l_2 = x + y$.

  For $j=1$ we find $X \cap \{f_1 = 0\}$ which recovers the symmetric $2\times 2$-matrices 
  of rank $\leq 1$. In this case it is clear that the strata of the rank stratification 
  for symmetric matrices is given by the intersection of the rank strata with the linear 
  subspace of symmetric matrices, but Lemma \ref{lem:RegularValueEtc} assures that we 
  can always achieve this situation if we need to. 

  The number $\mu(1; f_2|(X \cap \{f_1 = 0\}))$ must then be equal to $k-1$ as before 
  for the Morsification by $l_2 = x+y$. Then $\mathcal L(Z, \{0\})$ in the 
  corresponding induction step of the proof of Theorem 
  \ref{thm:BouquetDecompositionsCompleteIntersections} is the Milnor 
  fiber of $l_2$ on $(X \cap \{z-w = 0\}, 0)$. 

  The summand in (\ref{eqn:MilnorNumbersCompleteIntersections}) for $j=2$ would compute 
  the Morse critical points of a Morsification of $f_1 = z - w$ on 
  $V^1 \cap \{x + y = 0\}$. But since $f_1$ is already a linear form in general position, 
  there are no such critical points and the summand is zero. 
  Therefore $\mu(1; g) = k-1 + 0 = k-1$ as expected.
\end{example}

\section{Obstructions on fibers of maps}
\label{sec:TopologicalObstructionsOnFibersOfMaps}

Throughout this section we suppose $U \subset \CC^n$ is 
open and $X \subset U$ is a reduced complex analytic space, 
endowed with a Whitney $a$-regular stratification $X = \bigcup_{\alpha \in A} V^\alpha$. 

\subsection{Collections of $1$-forms on stratified spaces}

\begin{definition}
  \label{def:DegeneracyPointCollectionOf1Forms}
  Let $\omega_1,\dots, \omega_m \in \Gamma(U, \Omega^1_U)$ be a collection of $1$-forms. 
  We say that $x \in X$ is a \textit{degeneracy point} of $\omega = (\omega_1,\dots, \omega_m)$ 
  if the $\omega_i$ are linearly dependent in the dual $T_x^*V^\alpha$ of the tangent space 
  to the stratum $V^\alpha$ containing $x$.
\end{definition}

Note that the $a$-regularity of the stratification assures that for any given collection 
of $1$-forms $\omega$ as in Definition \ref{def:DegeneracyPointCollectionOf1Forms}
the set of degeneracy points is not only locally closed, but actually closed. 
We denote the set of degeneracy points of $\omega$ on $X$ by $D(\omega; X)$. 

\subsubsection{Obstruction theory}
We briefly recall the required notions for what follows. The reader is 
referred to \cite{Brasselet22} and the references given there for details.

Let $(K, \partial K)$ be an oriented simplicial complex of real dimension $n$ 
with an oriented subcomplex $\partial K$ of real dimension $<n$ and let 
$E \to K$ be a complex vector bundle of rank $r$. A collection of sections 
$s_1,\dots, s_m$ in $E$ is called an $m$-\textit{frame} in $E$ if the vectors 
$s_1(p), \dots, s_m(p)$ are linearly independent in the fiber $E_p$ at all points 
$p \in K$. 

For any $n$-simplex $(\Delta, \partial \Delta)$ in $K$ and an 
$m$-frame $s = (s_1,\dots, s_m)$ in $E$ over $\partial \Delta$, 
one can try to extend $s$ to an $m$-frame in $E$ over the interior of $\Delta$. 
Whenever $n - 1 < 2r - 2m + 1$ is smaller than the \textit{obstruction dimension}, 
this is always possible. In case $n -1  = 2r - 2m + 1$ there is a well-defined 
\textit{topological obstruction} to extending $s$ as an $m$-frame 
from $\partial \Delta$ to the interior of $\Delta$. This obstruction is 
an integer and it can be defined as the evaluation of the obstruction 
cocycle associated to $s$ over $\partial \Delta$ on the 
orientation class of $(\Delta, \partial \Delta)$. An alternative description 
of this number is given by the degree of the map 
\[
  s|_\Delta \colon \partial \Delta \to \Stief(m, r)
\]
to the Stiefel manifold of $m$-frames in $\CC^r$. More precisely, 
we choose a local trivialization $E|_{\Delta} \cong \CC^r \times \Delta$ of 
$E$ over $\Delta$ so that $s$ takes any point $p\in \Delta$ to an $m$-frame 
in $\CC^r$. The homology group $H_{2r - 2m + 1}(\Stief(m, r)) \cong \ZZ^1$ 
is cyclic with a distinguished generator $\gamma$ of positive orientation and the 
degree of $s|_\Delta$ is defined as the coefficient $d$ of the image of the 
orientation class 
\[
  \left(s|_\Delta\right)_*([\partial \Delta]) = d \cdot \gamma.
\]
We refer to e.g. \cite{EbelingGuseinZade05CharNumbers} for details, where 
the authors call this the \textit{index} of the $m$-frame.

\medskip
This notion of obstruction is naturally additive on the simplexes $(\Delta, \partial \Delta)$ 
of $(K, \partial K)$ in the following sense. Given an $m$-frame $s$ in $E$ over 
$\partial K$, we can extend $s$ over the subcomplex $K' \subset K$ given by all 
simplices of dimension $<n$. 
Now on every single $n$-dimensional simplex $\Delta$, we may assume to have 
a prescribed extension $s|_{\partial \Delta}$ on the boundary with a local 
obstruction on $(\Delta, \partial \Delta)$. One can then define the total obstruction 
to extending $s$ from $\partial K$ to $K$ as the sum of the obstructions on the 
top-dimensional simplices $(\Delta, \partial \Delta)$. 

\medskip
It is a well-known fact that when $n - 1 = 2r - 2m + 1$ is equal to the obstruction 
dimension, then the obstruction to extending $s$ as an $m$-frame 
from $\partial K$ to $K$ counts the number of isolated degeneracy points of 
a generic extension $\tilde s$ of $s$ as a collection of sections in $E$ over $K$. 
See e.g. \cite[Proposition 2]{EbelingGuseinZade05CollectionsOf1Forms} for a variant 
of this. For our purposes, this will also follow from the ``Morsification Lemma'', 
Lemma \ref{lem:Morsification}, below and an inspection of the local obstructions 
at such isolated degeneracy points, together with the aforementioned additivity.  

\medskip
At last, we would like to remind the reader that every compact real analytic space 
can be triangulated (see e.g. \cite{Lojasiewicz64}) and that, moreover, a
reduced complex analytic space $X$ always carries a unique orientation class. 
The latter is clear in the smooth case, but a little more care is needed for 
singular varieties. There, it basically follows from the fact that the singular 
locus has real codimension $\geq 2$ so that the orientation class 
of $X_\reg$ always extends naturally to an orientation class of $X$.

%\begin{example}
%  \label{exp:Obstructions}
%  Let $\pi \colon \mathrm{Bl}_0 \CC^2 \to \CC^2$ be the blowup at the origin 
%  of $\CC^2$ and consider the preimage of the unit ball $B \subset \CC^2$ 
%  and its boundary: $(K, \partial K) = (\pi^{-1}(B), \pi^{-1}(\partial B))$.
%  We have 
%  \[
%    \mathrm{Bl}_0 \CC^2 = \left\{ (p, L) \in \CC^2 \times \PP^1 : p \in L\right\}
%  \]
%  and over it the tautological bundle $\OO(-1)$ which is basically given by 
%  the same incidence relation. We therefore have a natural section 
%  given by 
%  \[
%    \tau \colon \pi^{-1}(B) \to \mathcal O(-1)|_{\pi^{-1}(B)}, \quad 
%    \left((x, y), \left\langle 
%    \begin{pmatrix}
%      x \\ y
%    \end{pmatrix}
%    \right\rangle_\CC
%    \right)
%    \mapsto 
%    \begin{pmatrix}
%      x \\ y
%    \end{pmatrix}
%    \in \CC^2.
%  \]
%  The obstruction to extending $\tau$ as a non-zero section of $\OO(-1)$ 
%  to the interior of $\pi^{-1}(B)$ is equal to $-1$, the integral of the 
%  first Chern class of $\OO(-1)$ on the exceptional locus 
%  $\pi^{-1}(\{0\}) \cong \PP^1$.
%\end{example}

\subsubsection{Nash modifications}
For any stratum $V^\alpha$ we may consider its closure $X_\alpha = \overline{V^\alpha}$ 
and the \textit{Nash modification}
\[
  \nu_\alpha \colon \tilde X_\alpha \to X_\alpha.
\]
It is defined as the closure of the graph of the Gauss map 
\[
  \gamma \colon V^\alpha \to \Grass(d(\alpha), n), \quad p \mapsto [T_p V^\alpha \subset T_p \CC^n]
\]
with $\nu_\alpha$ induced by the natural projection 
$\Grass(d(\alpha), n) \times X_\alpha \to X_\alpha$.
On $\tilde X_\alpha$ we find the \textit{Nash bundle} $\tilde T$, 
the restriction of the tautological bundle on $\Grass(d(\alpha), n)$ to $\tilde X_\alpha$. 

We denote the dual of the Nash bundle by $\tilde \Omega^1$.
For a $1$-form $\omega \in \Gamma(U, \Omega^1_{\CC^n})$ there is a natural 
notion of pullback 
$\nu_\alpha^* \omega \in \Gamma(\nu_\alpha^{-1}(U), \tilde \Omega^1)$
where at a point $(T, p) \in \tilde X_\alpha$ the section $\nu_\alpha^* \omega$ 
is given by the map 
\[
  T \to \CC, \quad v \mapsto \langle \omega(p), v \rangle.
\]
Here we denote by $\langle \cdot, \cdot \rangle$ the natural pairing between a vector 
space and its dual. 

By construction, $\nu_\alpha$ is an isomorphism over $V^\alpha$ with 
$\nu_\alpha^* TV^\alpha \cong \tilde T$ there. 
For points $p \in X_\alpha \setminus V^\alpha$ in the boundary of $V^\alpha$ 
the fiber of $\nu_\alpha$ consists of pairs 
\[
  %\nu_\alpha^{-1}(\{p\}) = 
  \{(T, p) \in \Grass(d(\alpha, n)) \times X_\alpha : 
  T \textnormal{ is a limiting tangent space to } V^\alpha \textnormal{ at } p \}.
\]
In general we have little control over these sets. Note, however, that
\[
  \dim \nu_\alpha^{-1}(X_\alpha \setminus V^\alpha) < \dim X_\alpha = \dim V^\alpha
\]
since $\tilde X_\alpha$ contains $V^\alpha$ as a dense open subset. 

%\begin{example}
%  \label{exp:NashModification}
%  Consider the space $\CC^{m\times n}$ of complex $m\times n$-matrices and 
%  the subset $X = \left\{\varphi \in \CC^{m \times n} : \rank \varphi < t\right\}$, 
%  again with its stratification by rank
%  \[
%    V_{m, n}^r = \left\{ \varphi \in \CC^{m\times n} : \rank \varphi = r\right\}.
%  \]
%  At a point $\varphi \in V_{m, n}^r \subset X$ of rank $r = t-1$, 
%  the tangent space to ...
%\end{example}

\subsection{The main technical result}

As before let $U \subset \CC^n$ be open, $X \subset U$ an 
equidimensional complex analytic space 
endowed with a Whitney stratification $X = \bigcup_{\alpha \in A} V^\alpha$ with 
irreducible strata and suppose that $0 \in X \subset \CC^n$ with $V^0 = \{0\}$ 
one of the strata. 

Furthermore, let
\[
  f \colon X \to \CC^k, \quad f(0) = 0
\]
be a holomorphic function and $\omega = (\omega_1,\dots, \omega_m)$ a 
collection of $1$-forms $\omega_j \in \Gamma(U, \Omega^1_{\CC^n})$ such that 
the intersection 
\begin{equation}
  \label{eqn:IntersectionAssumption}
  f^{-1}(\{0\}) \cap D(\omega; X) = \{0\}
\end{equation}
consists of only one single point.

We may choose a ``Milnor ball'' for the germ $(X, 0)$, i.e. a ball 
$B_\varepsilon$ of radius $\varepsilon > 0$ around $0 \in X$ such that for all 
$\varepsilon \geq \varepsilon' > 0$ the intersection 
$\partial B_{\varepsilon'} \cap X$ is transversal. 
Then $X \cap B_\varepsilon$ inherits a natural real analytic stratification 
by intersecting the interior of $B_\varepsilon$ and its boundary with the strata $V^\alpha$ 
of $X$. 

Consider a regular value $v \in \CC^k$ of $f|(X \cap B_\varepsilon)$. 
Since the boundary $X \cap \partial B_\varepsilon$ and its intersection 
with the fibers of $f$ are compact, we have that 
for $1 \gg |v| \geq 0$ small enough, the boundary of the fiber 
$X \cap \partial B_\varepsilon \cap f^{-1}(\{v\})$ remains disjoint 
from the degeneracy locus $D(\omega; X)$. 

Let $\nu \colon \tilde X \to X$ be the Nash transformation and consider 
its restriction to $X \cap B_\varepsilon$. 
We let 
\[
  M_f = X \cap B_\varepsilon \cap f^{-1}(\{v\})
\]
be the fiber over the regular value $v$ and 
\[
  \tilde M_f = \nu^{-1}(M_f) \subset \tilde X
\]
its preimage under the Nash modification. By construction, 
the pullback $\nu^* \omega$ provides us with a $k$-frame 
in the dual of the Nash bundle $\tilde \Omega^1$ over the 
boundary $\partial \tilde M_f := \nu^{-1}(\partial M_f)$ 
and there is a well-defined topological obstruction 
to extending $\nu^* \omega$ as an $m$-frame to the interior of $\tilde M_f$:
\begin{equation}
  \label{eqn:TopologicaObstruction}
  \lambda = \langle \mathrm{Obs}(\nu^*\omega), [\tilde M_f]\rangle.
\end{equation}
A priori, this number seems to depend on various choices, such as the 
radius $\varepsilon$ of the ball and the regular value $v$. 
However, we will see in Theorem \ref{thm:MainWorkhorse}
that under some further mildly restrictive assumptions, this is not the 
case.

\begin{remark}
  \label{rem:RegularValueRestriction}
  In the setting introduced above, we would like the obstruction to 
  extending $\nu^* \omega$ from $\partial \tilde M_f$ to its interior 
  to be located only in the top-dimensional strata. Without loss of 
  generality, we may assume $X$ to be irreducible, i.e. the closure 
  of its unique open stratum $V^d$ of complex dimension $d$. 

  Since $v$ is a regular value of $f|(X \cap B_\varepsilon)$, it is 
  evident that $M_f \cap X \cap \partial B_\varepsilon$ is of 
  real dimension $<2d$. 
  Unfortunately, for $\tilde M_f \cap \nu^{-1}(X \cap \partial B_\varepsilon)$
  this need no longer be the case: A priori, $\tilde M_f$ could be reducible 
  with additional components over the complement of the smooth stratum $V^d$.
  Then the obstruction to extending $\nu^*\omega$ to the interior of $\tilde M_f$
  could potentially also take non-trivial values there, i.e. outside 
  the open subset $\nu^{-1}(V^d)$.

  In order to exclude such ``phantom obstructions'', we need to make the following 
  restrictions. We refine the natural decomposition 
  \[
    \tilde X = \bigcup_{\alpha \in A} \nu^{-1}(V^\alpha)
  \]
  to a Whitney regular stratification 
  of $\tilde X$ in its new ambient space. Since $\nu$ is an isomorphism on the smooth 
  locus $X_{\reg}$, we can do so without altering $\nu^{-1}(X_\reg)$. If we choose
  $v$ to be a regular value of 
  \[
    f \circ \nu \colon \tilde X \cap \nu^{-1}(B_\varepsilon) \to \CC^k
  \]
  with respect to this stratification, then it is easy to see that indeed 
  $\nu^{-1}(X_\reg)$ is again dense in $\tilde M_f$ -- just as already anticipated 
  by intuition.
\end{remark}

The following will now be our main workhorse. Since we expect this to be of some interest 
of its own in this generality, we state it as an independent theorem.
%\subsection{An analytic formula for the topological obstruction} 

\begin{theorem}
  \label{thm:MainWorkhorse}
  For every sufficiently small $\varepsilon > 0$ such that $B_\varepsilon$ is a Milnor 
  ball for $(X, 0)$, there exists $\delta > 0$ and a dense open subset 
  $\Omega \subset D_\delta \subset \CC^k$ of regular values of $f|(X \cap B_\varepsilon)$ 
  such that 
  for every $v \in \Omega$ 
  the topological obstruction to extending $\nu^*\omega$ as an $m$-frame 
  from $\nu^{-1}(X \cap \partial B_\varepsilon \cap f^{-1}(\{v\}))$ to the interior 
  of $\nu^{-1}(X \cap B_\varepsilon \cap f^{-1}(\{v\}))$ is equal to 
  \[
    \lambda = %(-1)^d \cdot 
    \chi\left(R \nu_* \left(\mathcal Kosz(\nu^*f) \otimes \mathcal ENC(\nu^*\omega)\right)\right).
  \]
  In particular, this number is non-negative.
\end{theorem}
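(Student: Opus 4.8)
The plan is to prove Theorem \ref{thm:MainWorkhorse} by a chain of reductions that progressively replaces the topological obstruction $\lambda$ on the left with the holomorphic Euler characteristic on the right. First I would establish a \emph{Morsification Lemma}: by perturbing the collection $\omega$ to a generic nearby collection $\omega_t$ (e.g. $\omega_t = \omega + t \cdot \eta$ for generic linear $1$-forms $\eta$), one arranges that on $\nu^{-1}(V^d)$, the induced section of the appropriate Stiefel bundle (or rather its associated frame bundle of $m$-frames in $\tilde\Omega^1$) has only \emph{isolated} degeneracy points, each nondegenerate in the sense that the local obstruction there equals $+1$. By the additivity of the topological obstruction over the top-dimensional simplices and the fact (ensured by Remark \ref{rem:RegularValueRestriction}, via the choice of $v$ as a regular value for $f\circ\nu$ on the refined stratification) that $\nu^{-1}(X_\reg)$ is dense in $\tilde M_f$, the obstruction $\lambda$ equals the total count of these isolated degeneracy points of $\omega_t$ inside the interior of $\tilde M_f$. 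This already shows $\lambda \geq 0$ and reduces the problem to \emph{counting} a zero-dimensional scheme.

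Next I would identify this count with an intersection multiplicity on $\tilde X$. The degeneracy locus of $\omega$ together with the condition $\nu^* f = v$ cuts out, for generic $v$, a finite set of reduced points on $\nu^{-1}(B_\varepsilon)$; the Koszul complex $\mathcal{K}osz(\nu^* f)$ resolves $\OO_{\tilde M_f}$ while the Eagon–Northcott complex $\mathcal{ENC}(\nu^*\omega)$ resolves (after twisting) the structure sheaf of the degeneracy scheme where the $m$ sections of $\tilde\Omega^1$ drop rank. The tensor product $\mathcal{K}osz(\nu^* f) \otimes \mathcal{ENC}(\nu^*\omega)$ is then a complex whose hypercohomology, pushed forward by $\nu$, has Euler characteristic equal to the length of the intersection — precisely because, for generic $v$, all higher homology sheaves of the tensor complex vanish (the two subschemes meet properly in the expected dimension $0$) so that $\chi$ computes $\dim_\CC$ of the intersection scheme, supported away from $\nu^{-1}(X\setminus V^d)$ by our genericity choices. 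Conservation of number (flatness of the family over the disc $D_\delta$, using that $0$ is the only degeneracy point on the central fibre by \eqref{eqn:IntersectionAssumption}) then guarantees this Euler characteristic is independent of the generic $v \in \Omega$ and of small $\varepsilon$.

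Finally I would match the two counts. On $\nu^{-1}(V^d)$, where $\nu$ is an isomorphism and $\tilde\Omega^1 = \nu^*\Omega^1_{V^d}$, the local topological obstruction of a generic frame at an isolated degeneracy point is classically computed by the local intersection multiplicity of the corresponding determinantal/Eagon–Northcott ideal with the fibre of $f$ — this is exactly the "homological index" identification used in \cite{EbelingGuseinZadeSeade04} and \cite{GorskyGuseinZade18}, and it is here that the Eagon–Northcott complex enters as the natural resolution of the rank locus. Summing over the finitely many such points and invoking the conservation-of-number statement from the previous paragraph yields $\lambda = \chi\bigl(R\nu_*(\mathcal{K}osz(\nu^*f)\otimes \mathcal{ENC}(\nu^*\omega))\bigr)$.

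The main obstacle I anticipate is the second step: controlling the homology sheaves of the tensor complex $\mathcal{K}osz(\nu^*f)\otimes\mathcal{ENC}(\nu^*\omega)$ on the \emph{singular} Nash modification $\tilde X$, where $\nu^*f$ need not form a regular sequence and $\tilde X$ itself may fail to be Cohen–Macaulay. The resolution of this is to work with a \emph{generic} value $v$ and a generic perturbation of $\omega$ so that the supports of all higher $\mathcal{H}^i$ are forced into dimension $<0$, hence empty, over the relevant open set — which is why the theorem is stated only for $v$ in a dense open $\Omega$ — and to handle the potential contributions from $\nu^{-1}(X_\reg\setminus V^d)$ (other smooth strata, in the reducible case) by the dimension bound $\dim\nu^{-1}(X_\alpha\setminus V^\alpha) < \dim V^\alpha$ recorded above, together with the Whitney-stratification refinement of Remark \ref{rem:RegularValueRestriction}.
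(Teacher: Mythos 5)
Your proposal takes essentially the same route as the paper: perturb the pair $(f,\omega)$, use a Morsification statement (Lemma \ref{lem:Morsification}) together with additivity of the obstruction to turn $\lambda$ into a count of nondegenerate degeneracy points in the smooth locus, transfer the holomorphic Euler characteristic through the unfolding by the Giraldo--G\'omez-Mont conservation of number, and match the two contributions pointwise via the Eagon--Northcott-to-Koszul reduction (Theorem \ref{thm:EagonNorthcottSubbundles}), i.e.\ the homological-index identification. Your intermediate claims that the Koszul and Eagon--Northcott factors are genuine resolutions with vanishing higher homology sheaves on $\tilde X$ for generic $v$ are neither needed nor easy to justify on the possibly singular Nash transform, but since your final assembly only uses local Euler characteristics at smooth points of the generic fiber plus conservation of number --- exactly as in the paper --- this does not affect the argument.
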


\medskip
Here %$d = \dim(X, 0)$ and 
$\mathcal Kosz(\nu^* f)$ and 
$\mathcal ENC(\nu^* \omega)$ are certain complexes of 
sheaves associated to a collection of functions and 
$1$-forms which will be described below. The additional restriction made for the 
regular values $v$ to be elements of $\Omega$ is due to the reasons explained in 
Remark \ref{rem:RegularValueRestriction}.

\medskip
The formula for $\lambda$ in Theorem \ref{thm:MainWorkhorse} depends on three ingredients:
The space $(X, 0)$, the function $f$, and the collection of $1$-forms $\omega$. 
Note that $(X, 0)$ alone determines the Nash transformation and all the vector bundles 
involved in the complex of sheaves $\mathcal Kosz(\nu^*f) \otimes \mathcal ENC(\nu^*\omega)$.
The pair $(f, \omega)$ only introduces maps between them. 
Since the derived pushforward is functorial, this allows us to pre-compute the direct 
image complex as a function of $f$ and $\omega$:
\begin{corollary}
  \label{cor:bakingComplexes}
  In the setup of Theorem \ref{thm:MainWorkhorse} there exists a finite complex of free 
  modules 
  $\mathrm{Prep}(X; k, m)$ 
  over the ring
  \[
    \OO_{\CC^n, 0}[c_l, a_{i, j} : l = 1,\dots, k, \, i = 1, \dots, m, \, j = 1, \dots, n]
  \]
  such that for every function $f \colon (\CC^n, 0) \to (\CC^k, 0)$ and every collection 
  of $1$-forms $\omega = \omega_1, \dots, \omega_m$, $\omega_i \in \Omega^1_{\CC^n, 0}$, 
  we have that 
  \[
    R \nu_* \left(\mathcal Kosz(\nu^*f) \otimes \mathcal ENC(\nu^*\omega)\right)
  \]
  is quasi-isomorphic to the complex obtained from $\mathrm{Prep}(X; k, m)$ 
  by substituting $f$ for $c$ and the components $\omega_{i, j}$ for the $a_{i, j}$.
\end{corollary}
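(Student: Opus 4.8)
The plan is to construct $\mathrm{Prep}(X; k, m)$ by writing down the Koszul and Eagon--Northcott complexes in their \emph{generic} form over a polynomial extension of $\OO_{\tilde X}$, applying the derived pushforward \emph{before} any substitution of $f$ and $\omega$, and then exhibiting a finite free model of the result over the subring where the parameters live. First I would recall that for an indeterminate vector $c = (c_1, \dots, c_k)$ the complex $\mathcal Kosz(c)$ is the Koszul complex on the images of the $c_l$ under the $\OO_{\tilde X}$-linear map determined by $c_l \mapsto \nu^* f_l$; written generically, its terms are $\bigwedge^\bullet (\OO_{\tilde X}^k)[c]$ and the differential is multiplication by $\sum_l c_l\, e_l^\vee$, so after the substitution $c \leadsto f$ one recovers $\mathcal Kosz(\nu^* f)$ on the nose. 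Similarly, $\mathcal ENC$ of an $m \times n$ matrix $A = (a_{i,j})$ of indeterminates has terms built functorially from the free module $\OO_{\tilde X}^n$, the dual $\tilde\Omega^1$ of the Nash bundle, and divided powers of $\OO_{\tilde X}^m$, with a differential whose entries are linear in the $a_{i,j}$; under $a_{i,j} \leadsto \omega_{i,j}$ (the components of the pulled-back $1$-forms, which are the images of the coordinate $1$-forms $\D x_j$ evaluated on the Nash bundle) this specializes to $\mathcal ENC(\nu^* \omega)$. The tensor product of the two, taken over $\OO_{\tilde X}[c, a]$, is then a finite complex of locally free $\OO_{\tilde X}$-modules tensored up by the polynomial ring in the parameters.

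The second step is the derived pushforward. Since $\nu = \nu_\alpha$ is proper (it is projective, being the closure of a graph in a Grassmann bundle), $R\nu_*$ sends coherent complexes to coherent complexes; and because the parameters $c, a$ are ``external'' — the complex is $(\text{locally free } \OO_{\tilde X}\text{-complex}) \otimes_{\OO_{\CC^n,0}} \OO_{\CC^n,0}[c,a]$ in each degree — flat base change along the inclusion $\OO_{\CC^n,0} \hookrightarrow \OO_{\CC^n,0}[c,a]$ gives $R\nu_*$ of the generic complex $\simeq (R\nu_* \text{ of the }\OO_{\tilde X}\text{-complex}) \otimes \OO_{\CC^n,0}[c,a]$ as far as the \emph{terms} are concerned, with the parameter-dependent differentials carried along functorially. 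One then replaces this object by a bounded complex of finite free $\OO_{\CC^n,0}[c,a]$-modules quasi-isomorphic to it: this is possible because $\OO_{\CC^n,0}$ is Noetherian with finite global dimension, hence so is $\OO_{\CC^n,0}[c,a]$, and any bounded-above coherent complex admits a finite free resolution. Call the result $\mathrm{Prep}(X; k, m)$; by construction it depends only on $(X,0)$ (equivalently on $\nu$ and the Nash bundle), on $k$, and on $m$, not on $f$ or $\omega$.

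Finally I would check the substitution property. The substitution homomorphism $\OO_{\CC^n,0}[c,a] \to \OO_{\CC^n,0}$, $c_l \mapsto f_l$, $a_{i,j} \mapsto \omega_{i,j}$, makes $\OO_{\CC^n,0}$ a module over the polynomial ring, and $-\otimes^{\mathbf L}_{\OO_{\CC^n,0}[c,a]} \OO_{\CC^n,0}$ applied to the generic complex $\mathcal Kosz(c)\otimes\mathcal ENC(a)$ over $\tilde X$ yields $\mathcal Kosz(\nu^* f)\otimes\mathcal ENC(\nu^* \omega)$ by the defining functoriality of both complexes. Since $R\nu_*$ commutes with this base change — again because the terms of the complex are flat over $\OO_{\CC^n,0}[c,a]$ in the relevant sense, the module structure being through $\OO_{\CC^n,0}$ only — and $\mathrm{Prep}(X;k,m)$ is a free resolution of $R\nu_*$ of the generic complex, applying $-\otimes_{\OO_{\CC^n,0}[c,a]} \OO_{\CC^n,0}$ (no derived correction needed, as free modules are flat) to $\mathrm{Prep}(X;k,m)$ gives a complex quasi-isomorphic to $R\nu_*\big(\mathcal Kosz(\nu^* f)\otimes\mathcal ENC(\nu^* \omega)\big)$, as claimed. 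The main obstacle I anticipate is the bookkeeping needed to verify that the generic complexes really are $T$-independent in each degree in the precise sense required — i.e.\ that the Eagon--Northcott construction over the Nash bundle can be organized so that the parameters $a_{i,j}$ enter \emph{only} through the differentials and the divided-power/exterior-power terms are genuinely pulled back from $\OO_{\CC^n,0}$-modules — and that flat base change for $R\nu_*$ applies verbatim in this mildly non-standard ``polynomial coefficients'' setting; both are routine but must be stated carefully.
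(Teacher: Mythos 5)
Your proposal is correct and follows essentially the same route as the paper: the paper likewise observes that $(X,0)$ alone fixes the terms of $\mathcal Kosz(\nu^*f)\otimes\mathcal ENC(\nu^*\omega)$ while $(f,\omega)$ enters only through the differentials (linearly, via the parameters $c, a$), and it realizes $R\nu_*$ by a model with parameter-independent terms (a suitably truncated \v{C}ech complex as in \cite{Zach22}), so that substitution commutes with the pushforward by functoriality. Your flat-base-change and finite-free-resolution bookkeeping, including the remark that the substitution $c\mapsto f$, $a_{i,j}\mapsto\omega_{i,j}$ causes no derived correction because the terms are extended from $\OO_{\CC^n,0}$-modules along the flat inclusion into the parameter ring, is just a more explicit spelling-out of that same argument.
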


\begin{remark}
  Note that we have not made any non-degeneracy conditions on the pair $(f, \omega)$ 
  in the formulation of Corollary \ref{cor:bakingComplexes}. The complex 
  $\mathrm{Prep}(X; k, m)$ always exists and can be considered as an analogue 
  of the complexes used to define the classical ``homological index'', e. g. 
  in \cite{GomezMont98}. Whether or not these complexes have finitely supported 
  cohomology after substitution of $(c, a)$ with $(f, \omega)$ will depend 
  on the (non-)degeneracy of the latter on $(X, 0)$.
\end{remark}

\subsection{Geometric interpretations of some commutative algebra}
\subsubsection{The Koszul complex}

Let $R$ be a commutative ring and $G \cong R^n$ a free $R$-module with basis $\{e_i\}_{i = 1}^n$.
Denote the $R$-dual of $G$ by $G^\vee = \Hom_R(G, R) \cong R^n$ and let $\{\varepsilon_i\}_{i=1}^n$ be the dual basis. 
To an element $\varphi = \sum_{i=1}^n \varphi_i \cdot \varepsilon_i \in G^\vee$ we associate the 
\textit{Koszul complex}:
\begin{equation}
  \label{eqn:KoszulComplexDefinition}
  \Kosz(\varphi) \colon 
  \begin{xy}
    \xymatrix{
      0 \ar[r]&
      \bigwedge^n G \ar[r]^{\varphi \invneg } &
      \bigwedge^{n-1} G \ar[r]^{\varphi \invneg } &
      \dots \ar[r]^{\varphi \invneg } &
      \bigwedge^1 G \ar[r]^{\varphi \invneg } &
      \bigwedge^0 G \ar[r] &
      0
    }
  \end{xy}
\end{equation}
with maps given by 
\[
  e_I = e_{i_1} \wedge \dots \wedge e_{i_p} \mapsto \varphi \invneg e_I = \sum_{k = 1}^p (-1)^{k+1} \varphi_{i_k} \cdot e_{i_1} \wedge \dots \wedge \widehat{e_{i_k}} \wedge \dots \wedge e_{i_p}
\]
where the $\widehat{\cdot}$ indicates that the term should be omitted. We will also write $e_{I \setminus \{i_k\}}$ for the $k$-th such generator.

\begin{remark}
  In the literature it is customary to introduce the Koszul complex for a sequence 
  of elements $a_1,\dots,a_n \in R$. We will write 
  \[
    \Kosz(a_1,\dots, a_n) := \Kosz(\varphi)
  \]
  for the the linear form $\varphi = \sum_{i=1}^n a_i \cdot \varepsilon_i$ on a free
  module $G = R^n$.
\end{remark}

\begin{lemma}
  \label{lem:FunctorialityKoszulComplex}
  Let $F \cong R^m$ and $G \cong R^n$ be free modules, $\varphi \in G^\vee$, 
  %with basis $\{e'_i\}_{i=1}^m$ 
  and $f \colon F \to G$
% \[
%   f \colon F \to G, \quad e'_i \mapsto \sum_{j=1}^n s_{j, i} \cdot e_j
% \]
  a morphism. Then we obtain a natural morphism of complexes 
  $f_* \colon \Kosz(f^*\varphi) \to \Kosz(\varphi)$ from the induced maps 
  $f^{\wedge p} \colon \bigwedge^p F \to \bigwedge^p G$ on the exterior powers 
  and $f^* \varphi \in F^\vee$.
\end{lemma}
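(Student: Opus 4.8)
The plan is to produce the chain map degreewise via the exterior powers of $f$ and then verify compatibility with the Koszul differentials by a direct computation on generators. First I would set up the relevant algebra: given $f \colon F \to G$, we have the induced maps $\bigwedge^p f \colon \bigwedge^p F \to \bigwedge^p G$ on exterior powers for each $p$, and the pullback $f^*\varphi = \varphi \circ f \in F^\vee$. I would declare the candidate chain map $f_* \colon \Kosz(f^*\varphi) \to \Kosz(\varphi)$ to be $(\bigwedge^p f)_{p \geq 0}$. Since both complexes are concentrated in degrees $0, \dots, \max(\dim F, \dim G)$ and everything in sight is $R$-linear, the only thing to check is that each square
\[
  \begin{xy}
    \xymatrix{
      \bigwedge^p F \ar[r]^{f^*\varphi \invneg} \ar[d]_{\bigwedge^p f} & \bigwedge^{p-1} F \ar[d]^{\bigwedge^{p-1} f} \\
      \bigwedge^p G \ar[r]^{\varphi \invneg} & \bigwedge^{p-1} G
    }
  \end{xy}
\]
commutes.

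To verify commutativity it suffices, by $R$-linearity, to evaluate both composites on a basis element $e_I = e_{i_1} \wedge \dots \wedge e_{i_p}$ of $\bigwedge^p F$. Going right-then-down gives $\bigwedge^{p-1} f$ applied to $\sum_{k=1}^p (-1)^{k+1} (f^*\varphi)_{i_k} \, e_{I \setminus \{i_k\}}$, which equals $\sum_k (-1)^{k+1} (f^*\varphi)_{i_k} \, f(e_{i_1}) \wedge \dots \wedge \widehat{f(e_{i_k})} \wedge \dots \wedge f(e_{i_p})$. Going down-then-right gives $\varphi \invneg \bigl(f(e_{i_1}) \wedge \dots \wedge f(e_{i_p})\bigr) = \sum_k (-1)^{k+1} \langle \varphi, f(e_{i_k})\rangle \, f(e_{i_1}) \wedge \dots \wedge \widehat{f(e_{i_k})} \wedge \dots \wedge f(e_{i_p})$. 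These two expressions agree termwise once we observe that $(f^*\varphi)_{i_k} = \langle f^*\varphi, e_{i_k}\rangle = \langle \varphi, f(e_{i_k})\rangle$, which is exactly the definition of pullback. Naturality of the construction in $f$ (i.e. $(g \circ f)_* = g_* \circ f_*$ and $(\id)_* = \id$) then follows from functoriality of exterior powers.

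I do not expect any genuine obstacle here; the statement is essentially a bookkeeping exercise, and the only subtlety worth flagging is the sign convention implicit in the contraction operator $\varphi \invneg (-)$ together with the order in which one writes the wedge factors $f(e_{i_1}), \dots, f(e_{i_p})$ — once one fixes that the induced map on $\bigwedge^p$ sends $e_{i_1} \wedge \dots \wedge e_{i_p}$ to $f(e_{i_1}) \wedge \dots \wedge f(e_{i_p})$ in that order, the signs match automatically. I would keep the written proof to a few lines: state the candidate map, reduce to the single square by linearity, and display the termwise comparison using $(f^*\varphi)_{i_k} = \langle \varphi, f(e_{i_k})\rangle$.
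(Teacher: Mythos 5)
Your proof is correct and is precisely the ``straightforward computation'' the paper leaves to the reader: define $f_*$ degreewise by $\bigwedge^p f$ and check the commuting square on basis elements via $\langle f^*\varphi, e_{i_k}\rangle = \langle \varphi, f(e_{i_k})\rangle$. The only implicit step worth noting is that evaluating $\varphi \invneg$ on the non-basis decomposable $f(e_{i_1}) \wedge \dots \wedge f(e_{i_p})$ uses the standard fact that the contraction defined on basis elements extends multilinearly to the antiderivation formula on arbitrary wedges, which is immediate.
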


\noindent
We leave this straightforward computation to the reader. 

\begin{theorem}
  \label{thm:KoszulComplexFromSection}
  Let $\mathcal E$ be a locally free sheaf of $\OO_X$-modules of rank $n$ on a ringed space $(X, \OO_X)$ 
  and $\varphi \in \Gamma(X, \mathcal E^\vee)$ a section in its dual. Then we obtain a 
  natural complex of sheaves 
  \[
    \mathcal K\mathrm{osz}(\varphi) \colon 
    \begin{xy}
      \xymatrix{
        0 \ar[r]&
        \bigwedge^n \mathcal E \ar[r]^{\varphi\invneg} &
        \bigwedge^{n-1} \mathcal E \ar[r]^{\varphi\invneg} &
        \dots \ar[r]^{\varphi\invneg} &
        \bigwedge^1 \mathcal E \ar[r]^{\varphi\invneg} &
        \bigwedge^0 \mathcal E \ar[r] &
        0.
      }
    \end{xy}
  \]
  %which coincides with the Koszul complex (\ref{eqn:KoszulComplexDefinition}) 
  %on every local trivialization for $R = \OO_X(U)$ on some open set $U\subset X$. 
\end{theorem}

\begin{proof}
  It is clear that a local trivialization of $\mathcal E$ and its dual gives rise to a Koszul complex. That such 
  local models for $\mathcal K\mathrm{osz}(\varphi)$ glue naturally on the overlaps
  of any two local trivializations follows from Lemma \ref{lem:FunctorialityKoszulComplex}.
\end{proof}

\begin{remark}
  In \cite{Zach22} we have worked with the Koszul complex which arises from taking successive wedge products 
  with an element $v \in R^n$ in a free module. This is also functorial in the above sense, 
  but it does not
  provide a natural resolution of the quotient $R/\langle v_1,\dots, v_n\rangle$ since one has to choose
  an isomorphism $\bigwedge^n R^n \cong R$ for the augmentation map. These choices do not glue naturally 
  and therefore we prefer to work with the Koszul complex as introduced above in this note.
\end{remark}

\subsubsection{The Eagon-Northcott complex}

We briefly recall the construction of the Eagon-Northcott complex \cite{EagonNorthcott62}, 
thereby fixing notation for what follows. 

Let $A \in R^{k \times n}$ be a matrix with entries in a commutative ring $R$. We consider the rows $A_i$ 
of $A$ to be elements in the dual of a free $R$-module $G \cong R^n$, i.e. if we 
denote by 
$\{e_j\}_{j = 1}^n$ the canonical basis of $R^n$ and by $\varepsilon_j$ their respective dual 
elements, then
\[
  A_i = \sum_{j=1}^n a_{i, j} \cdot \varepsilon_j \in G^\vee.
\]
Let $S = R[x_1,\dots,x_k]$ be the polynomial ring in $k$ variables over $R$ and denote the homogeneous 
part of degree $d$ by $S_d$. The terms of the Eagon-Northcott complex are
\begin{equation}
  \label{eqn:TermsENC}
  \mathrm{ENC}(A)_p = 
  \begin{cases}
    R & \textrm{ for } p = 0 \\
    S_{p-1} \otimes_R \bigwedge^{k+p-1} G & \textrm{ for } p > 0.
  \end{cases}
\end{equation}
The morphisms between these are given by 
\[
  S_0 \otimes_R \bigwedge^k G \to R, \quad 1 \otimes e_I \mapsto \det A_{\{1,\dots, k\}, I}
\]
for $p = 1$ and 
\begin{eqnarray*}
  S_d \otimes_R \bigwedge^{k+d} G &\to & S_{d-1} \otimes_R \bigwedge^{k+d-1} G, \\
  x_1^{\alpha_1} \cdots x_k^{\alpha_k} \otimes e_I &\mapsto &
  \sum_{i=1}^k 
  \begin{cases}
    \frac{x^\alpha}{x_i} \otimes A_i \invneg e_I & \textnormal{ if } \alpha_i > 0 \\
    0 & \textnormal{otherwise}
  \end{cases}
\end{eqnarray*}
for $p > 1$. Here we denote by $A_{I, J}$ the submatrix of $A$ specified by the indices 
in $I$ and $J$, respectively.

\medskip
We would like to highlight some properties of the Eagon-Northcott complex 
which have probably been known to the the authors of \cite{EagonNorthcott62} 
but have not been spelled out to the full extent. 

\begin{lemma}
  \label{lem:FunctorialityENC}
  Let $f \colon F \to G$ be a morphism of free modules
  as in Lemma \ref{lem:FunctorialityKoszulComplex} 
  and $A \in R^{k \times n}$ a matrix whose rows are elements of $G^\vee$.
  Then we obtain a natural morphism 
  of complexes $f_* \colon \mathrm{ENC}(f^*A) \to \mathrm{ENC}(A)$ for the induced 
  morphisms 
  \[
    \mathrm{id} \otimes f^{\wedge (k+d)} \colon S_d \otimes \bigwedge^{k+d} F \to 
    S_d \otimes \bigwedge^{k+d} G
  \]
  on the terms of the Eagon-Northcott complexes associated to the matrices $A$ and 
  $f^*A$, respectively. 
\end{lemma}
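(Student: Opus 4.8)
The plan is to reduce the statement to the functoriality of the Koszul complex, Lemma~\ref{lem:FunctorialityKoszulComplex}, together with the naturality of the top exterior pairing. First I would observe that, by \eqref{eqn:TermsENC}, the terms of both $\mathrm{ENC}(A)$ and $\mathrm{ENC}(f^*A)$ have the shape $S_{p-1}\otimes_R\bigwedge^{k+p-1}(-)$ with the \emph{same} polynomial ring $S = R[x_1,\dots,x_k]$ (the number of rows $k$ is unchanged under pullback), and that the proposed maps $\mathrm{id}\otimes f^{\wedge(k+p-1)}$ act as the identity on the $S$-factor. Hence it suffices to check commutativity of the two types of squares appearing in the complexes.

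For $p>1$, the differential of $\mathrm{ENC}(A)$ sends $x^\alpha\otimes e_I$ to $\sum_{i\,:\,\alpha_i>0}(x^\alpha/x_i)\otimes(A_i\invneg e_I)$; that is, it is an $R$-linear combination, with monomial coefficients from $S$ that are literally the same for $\mathrm{ENC}(A)$ and $\mathrm{ENC}(f^*A)$, of the interior products $A_i\invneg(-)\colon\bigwedge^{k+d}G\to\bigwedge^{k+d-1}G$ with $d=p-1$. Thus the relevant square commutes summand by summand once we know the identity $A_i\invneg f^{\wedge(k+d)}(\eta)=f^{\wedge(k+d-1)}\bigl((f^*A_i)\invneg\eta\bigr)$ for $\eta\in\bigwedge^{k+d}F$ and each row $A_i\in G^\vee$; but this is exactly the computation underlying Lemma~\ref{lem:FunctorialityKoszulComplex}. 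In particular the sign conventions for $\invneg$ are the ones already fixed in the Koszul setting, so no fresh bookkeeping is needed.

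For the bottom differential $\mathrm{ENC}(A)_1\to\mathrm{ENC}(A)_0$ I would identify $1\otimes e_I\mapsto\det A_{\{1,\dots,k\},I}$ with the $R$-linear map $\bigwedge^kG\to R$ obtained by pairing against $A_1\wedge\cdots\wedge A_k\in\bigwedge^kG^\vee$ under the canonical perfect pairing $\bigwedge^kG^\vee\otimes_R\bigwedge^kG\to R$, using $\langle A_1\wedge\cdots\wedge A_k,\,e_{i_1}\wedge\cdots\wedge e_{i_k}\rangle=\det(a_{a,i_b})_{a,b}$. Since the induced map $\bigwedge^\bullet G^\vee\to\bigwedge^\bullet F^\vee$ is an algebra homomorphism we have $f^*(A_1\wedge\cdots\wedge A_k)=(f^*A_1)\wedge\cdots\wedge(f^*A_k)$, and naturality of the pairing gives $\langle A_1\wedge\cdots\wedge A_k,\,f^{\wedge k}(\eta)\rangle=\langle f^*A_1\wedge\cdots\wedge f^*A_k,\,\eta\rangle$, which is precisely $\det(f^*A)_{\{1,\dots,k\},\,\bullet}$ evaluated on $\eta$; so the last square commutes as well. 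Finally, functoriality of $f_*$ in $f$ — that $(g\circ f)_*=f_*\circ g_*$ and $\mathrm{id}_*=\mathrm{id}$ — is immediate from the corresponding properties of the functors $\bigwedge^p(-)$.

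The computation is essentially routine, and I do not expect a genuine obstacle; the one place that deserves a little care, and which I regard as the crux, is recasting the first differential as a pairing so that it visibly falls within the scope of the naturality statement. Once that reformulation is in hand, every square of $f_*$ reduces either to an instance of Lemma~\ref{lem:FunctorialityKoszulComplex} or to the naturality of the exterior pairing, and the morphism of complexes $f_*\colon\mathrm{ENC}(f^*A)\to\mathrm{ENC}(A)$ is obtained without further effort.
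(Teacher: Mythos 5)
Your proposal is correct, and it coincides with what the paper intends: the paper explicitly leaves this verification to the reader as a ``straightforward computation,'' and your write-up is exactly that check, reducing the higher squares to the interior-product identity underlying Lemma \ref{lem:FunctorialityKoszulComplex} and the bottom square to naturality of the determinant pairing on $\bigwedge^k$. No gap to report.
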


\noindent
Again, the verification of Lemma \ref{lem:FunctorialityENC}
is a straightforward computation which we leave to the reader. 

\begin{theorem}
  \label{thm:SheafVersionENC}
  Let $\mathcal E$ be a locally free sheaf of $\OO_X$-modules of rank $n$ on a ringed space $(X, \OO_X)$ 
  and $\Phi = (\varphi_1,\dots,\varphi_k)$ a collection of 
  sections $\varphi_i \in \Gamma(X, \mathcal E^\vee)$. 
  Then we obtain a natural complex of sheaves 
  $\mathcal ENC(\Phi)$. 
%
%  For a local trivialization of $\mathcal E$ over 
%  some open subset $U \subset X$ this complex restricts to the Eagon-Northcott 
%  complex $\mathrm{ENC}(A)$ for the ring $R = \OO_X(U)$ and the matrix $A \in R^{k \times n}$ 
%  whose rows are given by the local forms of the $\varphi_i$.
\end{theorem}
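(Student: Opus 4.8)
The plan is to mirror the proof of Theorem \ref{thm:KoszulComplexFromSection} almost verbatim, since the situation is entirely analogous: the Eagon-Northcott complex $\mathrm{ENC}(A)$ associated to a matrix $A \in R^{k\times n}$ only depends on $R$, on the free module $G = R^n$ in whose dual the rows of $A$ live, and on the rows $A_i \in G^\vee$ themselves, together with the auxiliary polynomial ring $S = R[x_1,\dots,x_k]$. In particular, no choice of an isomorphism $\bigwedge^n G \cong R$ enters the construction (unlike in the alternate Koszul complex discussed in the remark after Theorem \ref{thm:KoszulComplexFromSection}), so there is nothing obstructing the gluing. First I would make precise what ``natural complex of sheaves $\mathcal ENC(\Phi)$'' means: its terms are $\mathcal ENC(\Phi)_0 = \OO_X$ and $\mathcal ENC(\Phi)_p = \mathrm{Sym}^{p-1}(\OO_X^k) \otimes_{\OO_X} \bigwedge^{k+p-1}\mathcal E$ for $p > 0$ (where $\mathrm{Sym}^{d}(\OO_X^k)$ is the rank-$\binom{k+d-1}{d}$ free sheaf playing the role of $S_d$), with differentials given locally by the formulas recalled above after choosing a trivialization of $\mathcal E$.

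The key steps are then: (i) over an open $V \subset X$ on which $\mathcal E$ admits a trivialization $\mathcal E|_V \cong \OO_V^n$, the sections $\varphi_i|_V$ become elements of $(\OO_V^n)^\vee$, i.e. rows of a matrix $A \in \Gamma(V,\OO_X)^{k\times n}$, and the construction of \cite{EagonNorthcott62} recalled in \eqref{eqn:TermsENC} produces a genuine complex $\mathrm{ENC}(A)$ of $\OO_V$-modules; (ii) on an overlap $V \cap W$ of two trivializing opens, the change-of-trivialization is an isomorphism $g \colon \OO_{V\cap W}^n \to \OO_{V\cap W}^n$ of free modules, and the two matrices of rows are related by $A^{(W)} = g^*(A^{(V)})$; by Lemma \ref{lem:FunctorialityENC} applied to this isomorphism (and to its inverse) we obtain mutually inverse isomorphisms of complexes $g_* \colon \mathrm{ENC}(A^{(W)}) \xrightarrow{\sim} \mathrm{ENC}(A^{(V)})$, which moreover are induced termwise by $\mathrm{id}\otimes g^{\wedge(k+d)}$, hence are precisely the isomorphisms identifying the candidate terms $\mathrm{Sym}^{p-1}(\OO^k)\otimes\bigwedge^{k+p-1}\mathcal E$ computed in the two trivializations; (iii) the cocycle condition on triple overlaps follows from the functoriality in Lemma \ref{lem:FunctorialityENC} being compatible with composition of morphisms of free modules (the assignment $f \mapsto f_*$ is itself functorial, again a routine check on the defining formulas), so the local complexes glue to a well-defined complex of sheaves $\mathcal ENC(\Phi)$ on all of $X$.

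I expect the only mildly delicate point to be bookkeeping in step (ii): one must check that the transition isomorphism $\mathrm{id}\otimes g^{\wedge(k+d)}$ on the terms of the Eagon–Northcott complex agrees on the nose with the transition function that $\bigwedge^{k+d}\mathcal E$ inherits from that of $\mathcal E$ — but this is exactly the statement that the exterior power of a vector bundle is functorial, combined with the fact that the $\mathrm{Sym}^{d}(\OO_X^k)$ factor carries the trivial transition data and hence contributes nothing. Once this identification is in place, naturality (functoriality of $\Phi \mapsto \mathcal ENC(\Phi)$ in $\mathcal E$ and in the collection of sections) is immediate from the sheafified version of Lemma \ref{lem:FunctorialityENC}. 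Since all of this is a direct globalization of a local, coordinate-dependent construction via a functoriality lemma already granted to us, I would simply write: ``The proof is identical to that of Theorem \ref{thm:KoszulComplexFromSection}, using Lemma \ref{lem:FunctorialityENC} in place of Lemma \ref{lem:FunctorialityKoszulComplex} to glue the local models \eqref{eqn:TermsENC} on overlaps of trivializations of $\mathcal E$.''
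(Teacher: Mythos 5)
Your proposal is correct and follows exactly the paper's own argument: the paper proves Theorem \ref{thm:SheafVersionENC} by invoking Lemma \ref{lem:FunctorialityENC} for the change of basis of $\mathcal E$ on overlaps of local trivializations, just as in the proof of Theorem \ref{thm:KoszulComplexFromSection}. Your write-up merely spells out the gluing and cocycle details that the paper leaves implicit.
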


\begin{proof}
  Similar to Theorem \ref{thm:KoszulComplexFromSection}
  this follows directly from Lemma \ref{lem:FunctorialityENC} applied 
  to the change of basis for $\mathcal E$ on the overlap of two different local 
  trivializations.
\end{proof}

Let $A \in R^{(k+1)\times (n+1)}$ be a matrix, $k \leq n$, and suppose 
that one entry of $A$ is a unit. Without loss of generality, we assume 
this entry to be $a_{k+1, n+1}$. With an appropriate change of basis 
and an application of Lemma \ref{lem:FunctorialityENC}, we may furthermore 
assume $A$ to be of the form 
\[
  A = 
  \left(
  \begin{array}{ccc|c}
    a_{1,1} & \cdots & a_{1, n} & a_{1, n+1} \\
    \vdots & & \vdots & \vdots \\
    a_{k,1} & \cdots & a_{k, n} & a_{k, n+1} \\
    \hline
    0 & \cdots & 0 & 1
  \end{array}
  \right).
\]
Let $A' \in R^{k \times n}$ be the submatrix for the upper left block of $A$. 
\begin{lemma}
  The Eagon-Northcott complex for $A$ is homotopy equivalent to the 
  complex for $A'$. 
  \label{lem:HomotopyENC}
\end{lemma}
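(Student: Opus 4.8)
The plan is to exhibit an explicit contracting homotopy that realizes $\mathrm{ENC}(A)$ as a mapping cone over $\mathrm{ENC}(A')$ with the cone piece being split exact, using the special shape of $A$. Write $G \cong R^{n+1}$ with basis $e_1,\dots,e_n,e_{n+1}$ and let $G' \cong R^n$ be spanned by $e_1,\dots,e_n$, so that $\bigwedge^p G = \bigwedge^p G' \oplus \left(\bigwedge^{p-1} G' \wedge e_{n+1}\right)$. Because the last row of $A$ is $\varepsilon_{n+1}$, contraction by $A_{k+1}$ is simply the operation $e_{n+1}$-deletion, and contraction by $A_i$ for $i \leq k$ splits as $A'_i \invneg (-) $ on the $\bigwedge^\bullet G'$ summand plus a correction term involving $a_{i,n+1}$ on the $e_{n+1}$ summand. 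First I would record this block decomposition of every term $\mathrm{ENC}(A)_p = S_{p-1}\otimes \bigwedge^{k+p}G$ into an $e_{n+1}$-free part and an $e_{n+1}$-containing part, and of every differential with respect to it; the polynomial ring here is $S = R[x_1,\dots,x_{k+1}]$ for $A$ versus $R[x_1,\dots,x_k]$ for $A'$, so I would also track the extra variable $x_{k+1}$ paired with the last row.

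The key step is then to identify the subcomplex (or quotient complex) on which $x_{k+1}$ and $e_{n+1}$ are "active" and check it is contractible. Concretely, the map $\sigma$ that wedges with $e_{n+1}$ and divides a monomial by $x_{k+1}$ (when the exponent is positive) — roughly the partial inverse to the $i=k+1$ summand of the Eagon–Northcott differential — should serve as a null-homotopy on that part, because contracting by $A_{k+1} = \varepsilon_{n+1}$ and then re-wedging with $e_{n+1}$ is the identity on $e_{n+1}$-containing terms. After peeling off this acyclic piece, what survives is exactly $\mathrm{ENC}(A')$ with its standard differential: the terms $S'_{d}\otimes\bigwedge^{k+d}G'$ match $\mathrm{ENC}(A')_{d+1}$ after the degree shift coming from the extra row (a $(k{+}1)\times(n{+}1)$ matrix has its ENC indexed so that $\bigwedge^{k+1+d}$ appears, and the one-row reduction shifts this to $\bigwedge^{k+d}$ of the smaller module). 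I would verify the differentials agree on the nose using Lemma~\ref{lem:FunctorialityENC} for the inclusion $G' \hookrightarrow G$, which is precisely the functoriality that makes the surviving maps the Eagon–Northcott maps for $A'$.

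Having set up $\mathrm{ENC}(A) \cong \mathrm{Cone}(\mathrm{ENC}(A')[-1] \to 0) $-style decomposition, the homotopy equivalence follows formally: a bounded complex of free modules that is a direct sum (as graded modules, compatibly with the differential up to the explicit homotopy) of an acyclic complex of free modules and a copy of $\mathrm{ENC}(A')$ is homotopy equivalent to $\mathrm{ENC}(A')$, since the acyclic free summand is contractible. I expect the main obstacle to be bookkeeping: matching the grading conventions (the index shift $p \leftrightarrow d$, and the role of the extra polynomial variable $x_{k+1}$) so that the isomorphism of the "inactive" part with $\mathrm{ENC}(A')$ is the \emph{identity} and not merely an abstract isomorphism, and checking that the homotopy $\sigma$ together with the differential satisfies $d\sigma + \sigma d = \mathrm{id}$ exactly on the acyclic part rather than only up to terms that leak into the $\mathrm{ENC}(A')$ summand. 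Carefully choosing $\sigma$ to kill those cross terms — possibly by also subtracting a correction built from the $a_{i,n+1}$ — is where the real computation lies; everything else is formal homological algebra.
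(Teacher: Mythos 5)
Your strategy is essentially the paper's: split $G \cong G' \oplus R\cdot e_{n+1}$ and $S_d \cong \bigoplus_{j} S'_{d-j}\cdot x_{k+1}^{j}$, locate a copy of $\mathrm{ENC}(A')$ inside $\mathrm{ENC}(A)$ in the same homological degree (via $x^\alpha \otimes e_J \mapsto \pm\, x^\alpha \otimes e_J \wedge e_{n+1}$, compatible with the augmentation because maximal minors of $A$ involving the last column reduce to maximal minors of $A'$), and contract the complementary summands by an explicit homotopy. One concrete slip: you describe $\sigma$ as wedging with $e_{n+1}$ and \emph{dividing} by $x_{k+1}$; that lowers the symmetric degree, so it does not map $\mathrm{ENC}(A)_p = S_{p-1}\otimes \bigwedge^{k+p} G$ into $\mathrm{ENC}(A)_{p+1} = S_{p}\otimes \bigwedge^{k+p+1} G$. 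The partial inverse of the $i = k+1$ summand of the differential (which divides by $x_{k+1}$ and contracts $e_{n+1}$) must \emph{multiply} by $x_{k+1}$ and wedge with $e_{n+1}$; the paper's homotopy is exactly $x^\alpha x_{k+1}^{j}\otimes v \mapsto x^\alpha x_{k+1}^{j+1}\otimes e_{n+1}\wedge v$ on the $e_{n+1}$-free summands and zero elsewhere, and with this correction your plan coincides with the paper's proof. Your closing caveat about cross terms built from the entries $a_{i,n+1}$ is well placed: these are precisely what makes the verification of $h\partial + \partial h = \mathrm{id} - \iota\pi$ (which the paper leaves to the reader) nontrivial, since the naive inclusion of the $\mathrm{ENC}(A')$-summand fails to be a chain map until those terms are accounted for, or until the last column is cleared beforehand.
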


\begin{proof}
  The free module $F = R^{n+1}$ decomposes as $F \cong F' \oplus R \cdot e_{n+1}$ 
  which induces a splitting on its exterior powers 
  \[
    \bigwedge^p F \cong \left(\bigwedge^p F'\right)
    \oplus \left(R^1 \otimes \bigwedge^{p-1} F'\right)
  \]
  If we let $S' = R[x_1,\dots,x_k] \subset S = R[x_1,\dots,x_{n+1}]$, then 
  \[
    S_d \cong \bigoplus_{j=0}^d S'_{d-j} \cdot x_{n+1}^j.
  \]
  The terms in the Eagon-Northcott complex split accordingly as 
  \begin{equation}
    \label{eqn:DirectSumDecompositionENC}
    S_d \otimes \bigwedge^{m+1-d} F \cong 
    \left(\bigoplus_{j=0}^d S'_{d-j} \cdot x_{n+1}^j \otimes \bigwedge^{m+1-d} F'\right)
    \oplus
    \left(\bigoplus_{j=0}^d S'_{d-j} \cdot x_{n+1}^j \otimes R^1 \otimes \bigwedge^{m-d} F'\right).
  \end{equation}
  Note that in homological degree $p>0$ the summands 
  \[
    S'_{p-1} \otimes R^1 \otimes \bigwedge^{m-p+1} F' \cong S'_{p-1} \otimes \bigwedge^{m-p+1} F'
  \]
  provide an inclusion of $\mathrm{ENC}(A')$ into $\mathrm{ENC}(A)$. 
  Denote this inclusion by $\iota$ and the 
  corresponding projection by $\pi$. We have to specify a homotopy of 
  $\mathrm{ENC}(A)$ by means of maps 
  \[
    h \colon \mathrm{ENC}(A)_p \to \mathrm{ENC}(A)_{p+1}.
  \]
  For $p=0$ we choose $h$ to be the zero map. In case $p>0$ we define $h$ to be zero on 
  the second set of summands in (\ref{eqn:DirectSumDecompositionENC}). On the remainder 
  we let 
  \begin{eqnarray*}
    h \colon S'_{p-j-1} \cdot x_{n+1}^j \otimes \bigwedge^{m+p-1} F' &\to &
    S'_{p-j} \cdot x_{n+1}^{j+1} \otimes R^1 \otimes \bigwedge^{m+p-1} F',\\
    x^\alpha \cdot x_{n+1}^j \otimes v &\mapsto & x^\alpha \cdot x_{n+1}^{j+1} \otimes e_{n+1} \otimes v.
  \end{eqnarray*}
  We leave it to the reader to verify that indeed $h \circ \partial + \partial \circ h = \id - \iota \circ \pi$.
\end{proof}

In the context of vector bundles this allows for the following interpretation of 
the Eagon-Northcott complex. 

\begin{theorem}
  \label{thm:EagonNorthcottSubbundles}
  Let $X$ be a Haussdorff topological space and $E \cong \CC^r \times X \to X$ a 
  trivial complex vector 
  bundle of rank $r$. Suppose $\varphi = (\varphi_1,\dots, \varphi_t)$ 
  is a $t$-frame in $E^\vee$. Then for any additional set of sections 
  $\psi = (\psi_1,\dots,\psi_k)$ in $E^\vee$, the Eagon-Northcott complex 
  for the composed collection $(\psi | \varphi)$
  is homotopy equivalent to the Eagon-Northcott complex induced 
  by the restriction of $\psi$ to the kernel 
  sub-bundle $F \hookrightarrow E$ of $\varphi$. 
\end{theorem}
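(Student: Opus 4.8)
The plan is to deduce the claim from Lemma \ref{lem:HomotopyENC} by running a global version of its proof $t$ times, once for each $\varphi_i$. The only genuinely delicate point is that chain homotopy equivalences do not glue from local trivializations, so before invoking Lemma \ref{lem:HomotopyENC} --- which is stated over a ring and implicitly uses a local trivialization --- I would first replace the local picture by an honest global direct sum decomposition of $E$.

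First I would split $E$. The components of $\varphi$ assemble into a bundle map $\varphi\colon E \to \CC^t$, and the $t$-frame hypothesis says precisely that $\varphi$ is surjective with $\ker\varphi = F$. Since $E \cong \CC^r \times X$ is trivial, $\varphi$ is a continuous family of $t\times r$ matrices of constant rank $t$, which therefore admits a continuous right inverse (e.g.\ the fibrewise Moore--Penrose pseudo-inverse --- only continuity is needed, as $X$ is merely topological). Its image is a rank-$t$ subbundle of $E$ mapped isomorphically onto $\CC^t$ by $\varphi$, so $E \cong F \oplus \CC^t$ as topological vector bundles, with $\varphi$ becoming the projection onto $\CC^t$ followed by the coordinate functionals; under this identification each $\varphi_i$ is the $i$-th coordinate functional on the $\CC^t$-summand and each $\psi_j$ decomposes into its restriction $\psi_j|_F$ and its $\CC^t$-components. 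This is the only place the triviality of $E$ enters.

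Next I would globalize Lemma \ref{lem:HomotopyENC}. Fix the decomposition $E = F \oplus \CC^t$ with its standard basis on the second summand; peeling off the last coordinate line gives $\ker\varphi_t = F \oplus \CC^{t-1}$ and puts the matrix of $(\psi\mid\varphi)$ into the normal form of Lemma \ref{lem:HomotopyENC}. The maps $\iota$, $\pi$ and the homotopy $h$ in the proof of that lemma are built purely from the canonical decomposition $\bigwedge^p(\mathcal F \oplus \CC\cdot e) \cong \bigwedge^p\mathcal F \oplus \big(\bigwedge^{p-1}\mathcal F \otimes \CC\cdot e\big)$, the analogous splitting of each graded strand $S_d$ of the relevant polynomial ring according to the power of the last variable, and wedging resp.\ multiplication with the fixed generators; these operations are natural, so the same formulas define morphisms of complexes of sheaves over all of $X$. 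The identities making $\iota$, $\pi$ chain maps with $\pi\circ\iota = \id$ and $h\,\partial + \partial\,h = \id - \iota\circ\pi$ are local on $X$, and over any open set on which $F$ is trivial they are exactly what Lemma \ref{lem:HomotopyENC} asserts; hence they hold globally, giving a chain homotopy equivalence $\mathcal ENC(\psi\mid\varphi) \simeq \mathcal ENC\big((\psi,\varphi_1,\dots,\varphi_{t-1})|_{F\oplus\CC^{t-1}}\big)$.

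Finally I would iterate along the flag $F = F\oplus\CC^0 \subset F\oplus\CC^1 \subset \dots \subset F\oplus\CC^t = E$, where $\CC^{t-i}$ is the vanishing locus of the last $i$ coordinates: the construction of the previous step applies verbatim at each stage (none of the bundles involved need be trivial, only the line being split off) and removes $\varphi_t,\varphi_{t-1},\dots,\varphi_1$ in turn, producing a chain of homotopy equivalences from $\mathcal ENC(\psi\mid\varphi)$ down to $\mathcal ENC(\psi|_F)$. I expect the globalization in the third paragraph to be the main obstacle: one has to check that nothing in the proof of Lemma \ref{lem:HomotopyENC} involves a choice that fails to sheafify, which is precisely why the global splitting of $E$ is isolated first; the splitting itself and the bookkeeping of the iteration are routine.
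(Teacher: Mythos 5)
Your proof is correct, and while it runs on the same engine as the paper's -- an iterated application of Lemma \ref{lem:HomotopyENC}, peeling off one $\varphi_i$ at a time -- it organizes the globalization differently. The paper argues locally: near each point it picks a non-vanishing $t\times t$-minor of $\varphi$, changes basis over that open set, applies Lemma \ref{lem:HomotopyENC} repeatedly there, and then disposes of the patching with the single sentence that the construction ``glues'' -- which is exactly the delicate point you flag, since homotopy equivalences built from different minors and different changes of basis need not agree on overlaps. Your version defuses this by first producing one global continuous splitting $E\cong F\oplus\CC^t$ (constant rank plus triviality of $E$ gives a continuous right inverse of $\varphi$, e.g.\ fibrewise Moore--Penrose), after which $(\psi\mid\varphi)$ is globally in the normal form of Lemma \ref{lem:HomotopyENC} and the explicit $\iota$, $\pi$, $h$ from its proof are natural in the complementary summand, hence define sheaf-level chain maps and homotopies whose identities can be checked locally; this is arguably tighter at precisely the spot the paper glosses over. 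The trade-off is categorical: the paper's purely local argument works verbatim in the holomorphic and smooth settings (as its subsequent remark claims, and as the application at stalks of points of $Z$ in the proof of Theorem \ref{thm:MainWorkhorse} actually uses), whereas a Moore--Penrose splitting is only continuous or smooth, so your global argument covers the theorem as literally stated (Hausdorff topological $X$) but would have to fall back on local or stalkwise holomorphic splittings of $\varphi\colon E\to\CC^t$ -- which always exist, and with which your naturality argument still goes through -- to recover the holomorphic variant.
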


\begin{proof}
  Let $p \in X$ be an arbitrary point. By assumption $\varphi$ is a $t$-frame, so there 
  is at least one $t\times t$-minor of the $t \times r$-matrix associated to $\varphi$
% \[
%   \begin{pmatrix}
%     \varphi_{1,1} & \dots & \varphi_{1, r} \\
%     \vdots & & \vdots \\
%     \varphi_{t,1} & \dots & \varphi_{t, r} \\
%   \end{pmatrix}
% \]
  which does not vanish on some open neighborhood $U$ of $p$. 
  A repeated application of Lemma \ref{lem:HomotopyENC} 
  shows that over $U$ the Eagon-Northcott complex for the extended matrix 
  \[
    \begin{pmatrix}
      \psi_{1,1} & \dots & \psi_{1, r} \\
      \vdots & & \vdots \\
      \psi_{k,1} & \dots & \psi_{k, r} \\
      \hline
      \varphi_{1,1} & \dots & \varphi_{1, r} \\
      \vdots & & \vdots \\
      \varphi_{t,1} & \dots & \varphi_{t, r} \\
    \end{pmatrix}
  \]
  is homotopy equivalent to the Eagon-Northcott complex associated to the 
  $k$ sections induced by $\psi$ in the dual of the kernel bundle of $\varphi$.
  It is evident from the geometric nature of this construction that it glues 
  over different open subsets corresponding to different sets of non-vanishing 
  minors.
\end{proof}

Note that in the above theorem we have not specified whether we have a holomorphic, 
a smooth, or only a continuous $t$-frame. As we only use the usual arithmetic in the 
respective sheaf of rings, the theorem holds true in either category. 

\begin{corollary}
  \label{cor:ENCExactForFrame}
  The Eagon-Northcott complex associated to a $t$-frame is exact.
\end{corollary}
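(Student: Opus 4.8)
The plan is to deduce Corollary \ref{cor:ENCExactForFrame} directly from Theorem \ref{thm:EagonNorthcottSubbundles} by a degenerate application. Take $X$ to be a point (or any Hausdorff space, but a point suffices for the pure algebra statement), let $E \cong \CC^r$ be the trivial bundle, let $\varphi = (\varphi_1,\dots,\varphi_t)$ be the given $t$-frame, and apply the theorem with the \emph{empty} collection $\psi$, i.e. $k = 0$. Then the composed collection $(\psi\,|\,\varphi)$ is just $\varphi$ itself, so $\mathrm{ENC}(\varphi)$ is homotopy equivalent to the Eagon-Northcott complex associated to the empty collection of sections in the dual of the kernel subbundle $F \hookrightarrow E$ of $\varphi$.

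The point is then to identify the Eagon-Northcott complex of the empty collection. Looking at the term formula (\ref{eqn:TermsENC}) with $k = 0$: the ring $S = R[x_1,\dots,x_k]$ becomes $S = R$ with $S_0 = R$ and $S_d = 0$ for $d > 0$, so $\mathrm{ENC}$ has only the terms $p = 0$, giving $R$, and $p = 1$, giving $S_0 \otimes \bigwedge^0 G = R$, with the $p=1$ differential $1 \otimes e_\emptyset \mapsto \det$ of the empty $0\times 0$ minor, which is $1$. Hence the Eagon-Northcott complex of the empty collection is $0 \to R \xrightarrow{\cdot 1} R \to 0$, which is visibly exact (in fact contractible). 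Since homotopy equivalence preserves exactness, $\mathrm{ENC}(\varphi)$ is exact as claimed. I would spell out the $k=0$ reading of the construction carefully, since the convention for the empty matrix and the degree-$0$ polynomial ring is the only subtle point; alternatively, one can phrase it as: over a neighborhood of each point a suitable $t\times t$-minor of $\varphi$ is a unit, so repeated application of Lemma \ref{lem:HomotopyENC} collapses $\mathrm{ENC}(\varphi)$ all the way down to $\mathrm{ENC}$ of a $0\times(r-t)$ matrix, which is the trivial exact complex above.

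A second, essentially equivalent route avoids even invoking Theorem \ref{thm:EagonNorthcottSubbundles} and argues locally from Lemma \ref{lem:HomotopyENC} alone: at any point $p$ some $t\times t$-minor of the matrix of $\varphi$ is invertible on a neighborhood $U$; after the change of basis furnished by Lemma \ref{lem:FunctorialityENC} the matrix acquires a unit entry, and Lemma \ref{lem:HomotopyENC} replaces $\mathrm{ENC}$ by the complex of a smaller matrix with one fewer row and column; iterating $t$ times removes all rows and yields the trivial exact complex $0 \to \OO_U \xrightarrow{\cdot 1} \OO_U \to 0$. Exactness is local, so $\mathcal{ENC}(\varphi)$ is exact on all of $X$.

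The main obstacle — really the only thing requiring care — is the boundary case bookkeeping: making sure the Eagon-Northcott construction in (\ref{eqn:TermsENC}) is read correctly when the number of rows $k$ is $0$ (empty determinant equal to $1$, $S = R$ concentrated in degree $0$), and checking that Lemma \ref{lem:HomotopyENC}, which is stated for a $(k+1)\times(n+1)$ matrix with $k \leq n$, still applies in each reduction step down to and including the step producing a single-row situation. Neither of these is deep, but the statement of the corollary is so terse that one should make the reduction explicit rather than leave it to the reader. I expect the whole proof to be two or three sentences once the $k=0$ convention is pinned down.
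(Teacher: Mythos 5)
Your proof is correct and essentially the same as the paper's: the paper proves the corollary by an iterated application of Lemma \ref{lem:HomotopyENC} exactly as in the proof of Theorem \ref{thm:EagonNorthcottSubbundles} (it also notes the statement is \cite[Proposition 3.1, i)]{EagonNorthcott62}), which is your second route; your first route, applying the theorem with empty $\psi$ and identifying $\mathrm{ENC}$ of the empty collection as $0 \to R \xrightarrow{\;1\;} R \to 0$, is just a repackaging of the same reduction.
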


\begin{proof} This is the content of \cite[Proposition 3.1, i)]{EagonNorthcott62}. 
  However, with the above preparations it now also follows directly 
  from an iterated application of Lemma \ref{lem:HomotopyENC} as in the proof of 
  Theorem \ref{thm:EagonNorthcottSubbundles}.
\end{proof}

\subsection{Proof of Theorem \ref{thm:MainWorkhorse}}

We first note that the cohomology of the complex of sheaves 
$\mathcal Kosz(\nu^*f) \otimes \mathcal ENC(\nu^* \omega)$
is supported in the central fiber $\nu^{-1}(\{0\})$. 
To see this, let $0 \neq q \in X \cap B_\varepsilon$ be an arbitrary point 
and $p \in \nu^{-1}(\{q\})$ a point in its preimage. 

Suppose $\nu^* f(p) \neq 0$. Then $\mathcal Kosz(\nu^*f)_p$ is exact 
and so then is its tensor product with $\mathcal ENC(\nu^*\omega)$. 
Now if $p$ comes to lay on the preimage of the central fiber of $f$ under 
$\nu$, the point $q = \nu(p)$ can not be a degeneracy point of $\omega$
due to the assumptions made in (\ref{eqn:IntersectionAssumption}). 
Consequently, the sections $\nu^*\omega$ are linearly independent 
at $p$ and $\mathcal ENC(\nu^* \omega)_p$ is exact according to 
Corollary \ref{cor:ENCExactForFrame}. 
Shrinking $X$ if necessary, it follows that 
the complex 
\[
  R\nu_* \left(\mathcal Kosz(\nu^*f) \otimes \mathcal ENC(\nu^* \omega)\right)
\]
has homology supported at most at the origin $0 \in X$.

\medskip

Now let $\underline c = (c_1,\dots, c_k)$ and 
$\underline a = (a_{i, j})$, $i=1,\dots, m$, $j = 1,\dots, n$ be 
indeterminates. We regard them as parameters in an unfolding of 
both $f$ and $\omega$ on the trivial family 
\[
  X \times \CC^k \times \CC^{m \times n} \to \CC^k \times \CC^{m \times n}.
\]
In other words, over a point $(c, a)$ in the parameter space we consider the 
function 
\[
  f^c \colon X \to \CC^k, \quad x \mapsto f(x) + c
\]
and the collection of $1$-forms $\omega^a$ given by
\[
  \omega^a_i + \sum_{j=1}^n a_{i, j} \cdot \D x_j.
\]
The respective pullbacks induce a family of complexes of coherent sheaves
\begin{equation}
  \label{eqn:FamilyOfComplexesOnNashTransform}
  \mathcal Kosz(\nu^* f^c) \otimes \mathcal ENC(\nu^*\omega^a)
\end{equation}
on the Nash transform $\tilde X \times (\CC^k \times \CC^{m \times n})$. 
Note that the sheaves of modules in this family are all locally 
free and independent of the deformation parameters $(c, a)$. 
It is only the morphisms between them which vary. 

Taking derived pushforward along $\nu$, we obtain a family of 
complexes of coherent sheaves 
\begin{equation}
  \label{eqn:PushforwardFamily}
  R \nu_* \left(\mathcal Kosz(\nu^*f^c) \otimes \mathcal ENC(\nu^*\omega^a)\right)
\end{equation}
on $X \times (\CC^k \times \CC^{m \times n})$. For the stalk at $0 \in X$ 
we can again choose a representative of this complex so that the modules 
involved are constant with respect to the deformation parameters and only 
the maps vary; for instance, one can take a strand of a suitably truncated 
\v{C}ech-complex as in \cite{Zach22}.

\medskip
As already done in \cite{Zach22}, we may at this point invoke the main 
theorem of \cite{GiraldoGomezMont02}:
The law of conservation of number for local Euler characteristics. 
It asserts that there exists a product $U' \times T$ of neighborhoods 
of $0$ in $X$ and $(0, 0)$ in $\CC^k \times \CC^{m \times n}$, respectively,
such that for every $(c, a) \in T$ we have 
\begin{equation}
  \label{eqn:ConservationPrinicpleApplied}
  \chi \left(R \nu_* \left(
  \begin{array}{c}
    \mathcal Kosz(\nu^*f) \\
    \otimes \\
    \mathcal ENC(\nu^*\omega)
  \end{array}
  \right)\right)_0
  = \sum_{p \in U'}
  \chi \left(R \nu_* \left(
  \begin{array}{c}
    \mathcal Kosz(\nu^*f^c) \\
    \otimes \\
    \mathcal ENC(\nu^*\omega^a)
  \end{array}
  \right)\right)_p
\end{equation}
This is colloquially phrased as ``The holomorphic Euler characteristic of a 
(perfect) complex of coherent sheaves satisfies the law of conservation of number''.

Choose a Milnor ball $B_\varepsilon$ for $(X, 0)$ which is small enough 
so that $B_\varepsilon \cap X$ is contained in $U'$ and suppose that 
$v \in \CC^k$ is a regular value of $f$ restricted to $B_\varepsilon \cap X$, 
with $\{v\} \times \CC^{m \times n} \cap T \ni (v, 0)$ nonempty.
In order to link this number with topological obstructions, we need 
the following technical 'Morsification lemma'.

\begin{lemma}
  \label{lem:Morsification}
  Let $(X, \partial X)$ be a compact, reduced, real analytic space with boundary,
  containing a smooth, 
  dense open subset $U \subset X$ of even real dimension $2d$ and let 
  $E \to X$ be a complex sub-bundle of rank $r$ of a trivial vector 
  bundle $\CC^n \times X \to X$. Suppose 
  \[
    A \colon X \to \CC^{m \times n}
  \]
  is an arbitrary real analytic function for some $m$ with $d \leq r - m + 1$. 

  Then there exist arbitrarily small constant matrices $B \in \CC^{m \times n}$ 
  such that the degeneracy locus 
  $D((A + B)|_E; X)$ is either empty, or a zero dimensional submanifold of $U$. 
  The latter can only happen in case of equality $d = r - m + 1$.
\end{lemma}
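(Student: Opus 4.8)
The plan is to realise the degeneracy locus as the preimage of a generic determinantal stratum under a section of a bundle of homomorphisms, and then to perturb that section into general position by a parametrized transversality argument with the constant matrices $B$ as parameters. Any real analytic map $C\colon X\to\CC^{m\times n}$, read through its rows $C_i\colon X\to(\CC^n)^\vee$, defines a section of the trivial bundle $\Hom(\underline{\CC^m},\underline{(\CC^n)^\vee})$ over $X$, where $\underline W:=W\times X$; composing with the surjection $\underline{(\CC^n)^\vee}\twoheadrightarrow E^\vee$ dual to the sub-bundle inclusion $E\hookrightarrow\underline{\CC^n}$ produces a section $\Phi_C$ of $\mathcal H:=\Hom(\underline{\CC^m},E^\vee)$, a vector bundle of complex rank $mr$ on $X$. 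A point $x$ is a degeneracy point of $C|_E$ (in the sense of Definition \ref{def:DegeneracyPointCollectionOf1Forms}, with $E$ replacing the tangent spaces of the strata) exactly when $\Phi_C(x)$ fails to be injective as a map $\CC^m\to E^\vee_x$, i.e.\ when $\Phi_C(x)$ lies in the generic determinantal subvariety $\Sigma\subset\mathcal H$ of homomorphisms of rank $<m$; hence $D(C|_E;X)=\Phi_C^{-1}(\Sigma)$. Stratify $\Sigma=\bigsqcup_{j=0}^{m-1}\Sigma_j$ by the rank: each $\Sigma_j$ (rank exactly $j$) is a smooth submanifold of $\mathcal H$ with $\codim_{\mathcal H}\Sigma_j=2(r-j)(m-j)$, so that $\codim_{\mathcal H}\Sigma_{m-1}=2(r-m+1)$ and $\codim_{\mathcal H}\Sigma_j>2(r-m+1)\ge 2d$ for all $j\le m-2$.

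I would then fix a finite real analytic stratification $X=U\sqcup\bigsqcup_\beta W_\beta$ into smooth manifolds with $\dim W_\beta<2d$ for every $\beta$ (the $W_\beta$ absorbing $\partial X$ and $X\setminus U$), which exists by standard real analytic stratification theory; cf.\ \cite{Lojasiewicz64}. Consider the evaluation map
\[
  \Psi\colon X\times\CC^{m\times n}\longrightarrow\mathcal H,\qquad (x,B)\longmapsto\Phi_{A+B}(x),
\]
covering the first projection, where $\Phi_{A+B}$ is the section attached to the map $x\mapsto A(x)+B$. For fixed $x$ the partial map $B\mapsto\Psi(x,B)$ is affine with linear part $B\mapsto B|_{E_x}$, which is surjective onto $\Hom(\CC^m,E^\vee_x)$ because $(\CC^n)^\vee\to E^\vee_x$ is onto; hence $\Psi$ is a submersion, in particular transverse to every $\Sigma_j$ and to the analogous determinantal strata over each $W_\beta$. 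The parametrized transversality theorem then yields, for each of the finitely many strata $Y\in\{U\}\cup\{W_\beta\}$, a residual (hence dense) set of $B\in\CC^{m\times n}$ for which $\Phi_{A+B}|_Y$ is transverse to all the $\Sigma_j$; intersecting these conditions over the finitely many $Y$ and once more with an arbitrarily small ball about $0\in\CC^{m\times n}$, one obtains an arbitrarily small admissible $B$ that works on all strata simultaneously.

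Fixing such a $B$ and setting $\tilde A=A+B$, the conclusion becomes a dimension count. On a stratum $W_\beta$ one has $\dim W_\beta<2d\le\codim_{\mathcal H}\Sigma_j$ for all $j\le m-1$, so transversality forces $\Phi_{\tilde A}^{-1}(\Sigma_j)\cap W_\beta=\emptyset$, and therefore $D((A+B)|_E;X)\subset U$. On $U$ itself, for $j\le m-2$ we have $\codim_{\mathcal H}\Sigma_j>2d=\dim U$, so these preimages are again empty, while transversality to $\Sigma_{m-1}$ makes $\Phi_{\tilde A}^{-1}(\Sigma_{m-1})$ a smooth submanifold of $U$ of dimension $2d-2(r-m+1)\le 0$. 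Hence $D((A+B)|_E;X)=\Phi_{\tilde A}^{-1}(\Sigma_{m-1})$ is empty when $d<r-m+1$ and a zero-dimensional submanifold of $U$ when $d=r-m+1$ --- in fact a finite set, being the preimage of the closed locus $\{\operatorname{rank}<m\}$ in the compact space $X$. This is exactly the assertion; the case $d=0$ is trivial, a compact reduced real analytic space of dimension $0$ being a finite set of points.

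The substantive points, and where I would spend the care, are two. First, the submersivity of the parameter family in the second step: it rests entirely on the surjectivity $(\CC^n)^\vee\to E^\vee_x$, i.e.\ on the hypothesis that $E$ be a \emph{sub}-bundle of the trivial bundle $\underline{\CC^n}$ --- it is precisely this that makes perturbing the rows of $A$ by constant covectors enough to move $\Phi_A$ freely in $\mathcal H$. Second, the bookkeeping that lets one choose a \emph{single} small $B$ which simultaneously pushes the degeneracy locus off every lower-dimensional stratum (the boundary $\partial X$ included) and cuts it down to the expected dimension inside $U$; this is what forces the preliminary finite stratification of $X$. Everything else is a routine transversality-and-Sard computation.
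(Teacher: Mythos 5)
Your argument is correct, but it takes a genuinely different route from the paper. The paper proves the lemma by induction on $m$: one first perturbs the last row alone so that it vanishes nowhere on $E$ — this is the case $m=1$, handled by exactly the incidence-variety-plus-Sard device that underlies the parametrized transversality theorem you invoke (the set $N=\{(p,\varphi): (A(p)+\varphi)|_{E_p}=0\}$, projection to the $\varphi$-coordinate, choice of a regular value avoiding the image of the part of $N$ lying over $X\setminus U$) — and then replaces $E$ by the kernel sub-bundle $E'$ of that perturbed row and repeats with the remaining $m-1$ rows. You instead treat all rows at once, reading $A+B$ as a section of $\Hom(\underline{\CC^m},E^\vee)$ and making it transverse to the rank strata $\Sigma_j$ over each stratum of $X$, with the codimension count $2(m-j)(r-j)$ finishing the dimension bookkeeping. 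Each approach buys something: your one-shot argument is shorter, avoids the row-by-row reduction, and yields a little more (transversality to $\Sigma_{m-1}$ at every degeneracy point and emptiness of the deeper rank strata), whereas the paper's inductive kernel-bundle reduction is deliberately parallel to the algebraic side — passing to $E'=\ker(a_m+b_m)\cap E$ is the geometric counterpart of Lemma \ref{lem:HomotopyENC} and Theorem \ref{thm:EagonNorthcottSubbundles}, and this is precisely how the proof of Theorem \ref{thm:MainWorkhorse} later identifies the local Eagon--Northcott complex at a degeneracy point with a Koszul complex of length $d-k$. Both proofs rest on the same two implicit facts you single out: that $\dim_\RR(X\setminus U)<2d$ (standard for subanalytic sets, and used without comment in the paper's estimate for $N_Z$ as well), and that $E$ is analytic over the strata so that Sard-type arguments apply; so neither of these is a defect of your version relative to the paper's.
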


\begin{proof}
  We proceed by induction on $m$. For $m=1$ we let 
  \[
    N = \left\{ (p, \varphi) \in X \times \CC^{1\times n} : (A(p) + \varphi)|E_p = 0\right\}.
  \]
  Let $Z = X \setminus U$ be the complement of $U$ and denote by $N_Z$ and $N_U$ 
  the intersections of $N$ with $Z \times \CC^{1\times n}$ and $U \times \CC^{1\times n}$, 
  respectively. The fiber of $N$ over any point $p \in X$ is an affine linear subspace of 
  complex dimension $n - r$ and therefore both $N$ and $N_U$ have 
  real dimension $2d + 2n - 2r \leq 2n$. 
  On the contrary, the space $N_Z$ has real dimension $\dim_\RR Z + 2n - 2r < 2n$. 
  
  Let $\pi \colon N \to \CC^{1\times n}$ be the projection to the second factor. 
  Being of lesser dimension, the image of $N_Z$ is nowhere dense. We can choose 
  $B \in \CC^{1 \times n}$ to be any regular value of $\pi$ on the manifold $N_U$ 
  which is not contained in the image of $N_Z$. Clearly, $\pi^{-1}(\{B\})$ must 
  be empty if $d < r$ and if it is not, then it is a smooth submanifold of $U$.

  Now suppose $m > 1$ and let $a_m$ be the last row of $A$. We may apply the 
  above procedure to obtain $b_m$ arbitrarily small for $a_m$ with empty degeneracy 
  locus $D((a_m + b_m)|_E; X)$ on $X$. Consequently, the kernel bundle 
  $E' = \ker b_m \cap E$ is of rank $r-1$. We can then repeat the argument 
  for the matrix $A' \in \CC^{(m-1) \times n}$ which is obtained from $A$ 
  by deleting the last row and the vector bundle $E'$. 
\end{proof}

We can now finish the proof of Theorem \ref{thm:MainWorkhorse}. 
Since $X \cap B_\varepsilon$ is compact, the dense set of regular values 
of $f$ is also open in the Euclidean topology. 
The same holds for $f \circ \nu$ restricted to 
$\nu^{-1}(X \cap B_\varepsilon)$ as discussed in Remark \ref{rem:RegularValueRestriction}. 
We set $\Omega \subset T$ to be the intersection 
of these two sets in $T$. 

Choose $\delta > 0$ so that $D_\delta \subset T$ and $\nu^*\omega$ 
is an $m$-frame in $\tilde \Omega^1$ along 
$\nu^{-1}(X \cap \partial B_\varepsilon \cap f^{-1}(\{v\})$
for every $v \in D_\delta$. We choose $v$ to be in $D_\delta \cap \Omega$ 
and let again $M_f = f^{-1}(\{v\}) \cap X \cap B_\varepsilon$ be 
the associated fiber and $\tilde M_f = \nu^{-1}(M_f)$ its
preimage under the Nash blowup. Then $V = M_f \cap X_{\mathrm{reg}}$ 
is a dense open subset of both $M_f$ and $\tilde M_f$
and we can apply Lemma \ref{lem:Morsification} to the Nash bundle 
and the collection of $1$-forms $\nu^* \omega$ on $\tilde M_f$. 

If $d - k = \dim M_f < n - m + 1$, then there is no topological 
obstruction to extending $\nu^*\omega$ as an $m$-frame from 
$\partial \tilde M_f$ to the interior of $M_f$. Accordingly, 
Lemma \ref{lem:Morsification} tells us that the degeneracy 
locus of arbitrarily small perturbations $\omega'$ of $\nu^*\omega$ 
is empty and hence $\mathcal ENC(\omega')$ is exact along 
$\tilde M_f$. Consequently, both 
$\mathcal Kosz(\nu^*f^v) \otimes \mathcal ENC(\nu^* \omega')$ and 
its derived pushforward along $\nu$ must be trivial. 
Since $(\nu^* f^v, \omega')$ can be chosen arbitrarily close to 
$(\nu^*f, \nu^*\omega)$, it 
follows from the principle of conservation of number that 
also the Euler characteristic on the left hand side of 
(\ref{eqn:ConservationPrinicpleApplied}) must vanish. 

Suppose therefore that $d - k = n - m + 1$. The topological 
obstruction for $\nu^*\omega$ is independent of sufficiently small 
perturbations of $\nu^* \omega$ and we can choose $\omega'$ as 
in Lemma \ref{lem:Morsification} accordingly. If the degeneracy locus 
of $\omega'$ on $M_f$ is empty, then the existence of $\omega'$ shows 
that $\nu^*\omega$ can be extended as an $m$-frame to the interior 
of $M_f$. If it is non-empty, but a smooth submanifold $Z$ of the regular 
stratum of $M_f$, then it is easy to see from the additivity 
of the obstruction that every point $p \in Z$ contributes an increment 
of one to the overall obstruction. 

On the analytic side 
the sum on the right hand side of (\ref{eqn:ConservationPrinicpleApplied})
is supported on $Z$. Since $\nu$ is locally an isomorphism there, the 
derived pushforward can be dropped at such points. Following the 
argument in the proof of Lemma \ref{lem:Morsification} and 
applying Theorem \ref{thm:EagonNorthcottSubbundles}, we also see 
that at any point $p \in Z$ the complex $\mathcal ENC(\omega')_p$ 
is homotopy equivalent to a Koszul complex $\mathcal K'$ on $d - k$ local coordinates 
for $M_f$ at $p$. Altogether, $(\mathcal Kosz(\nu^* f^v) \otimes \mathcal K')_p$ then 
resolves $\mathcal O_{Z, p}$ as an $\OO_{X, p}$-module. The Euler characteristic 
of this complex is equal to one, just like the local contribution to the 
topological obstruction. This finishes the proof.

\section{Some L\^e-Greuel type formulas}

We can now provide some ways to compute 
the numbers $\mu(\alpha; f)$ 
%for a function $f$ with an 
%isolated singularity on $(X, 0) \subset (\CC^n, 0)$ 
from Theorem 
\ref{thm:BouquetDecompositionsCompleteIntersections} 
as holomorphic Euler characteristics. 

\begin{theorem}
  \label{thm:LeGreuelFormulaI}
  Let $f \colon (\CC^n, 0) \to (\CC^k, 0)$ be a holomorphic function 
  with an isolated singularity on a Whitney stratified germ 
  $(X, 0) \subset (\CC^n, 0)$, $X = \bigcup_{\alpha \in A} V^\alpha$. 
  Then there exists a change of coordinates in the codomain $(\CC^k, 0)$
  and linear forms $l_1,\dots, l_k$ on $(\CC^n, 0)$ such that
  \begin{eqnarray*}
    \mu(\alpha; f)
    & = & 
    \sum_{j=1}^k 
    \chi\left(
      R(\nu_\alpha)_*\left(
      \begin{array}{c}
        {\mathcal K}osz(\nu_\alpha^*(f_1,\dots,f_{j-1}, l_{j+1},\dots, l_k))\\
        \otimes \\
        {\mathcal E}NC(\nu_\alpha^*(\D f_1,\dots, \D f_j, \D l_{j+1},\dots, \D l_k))
      \end{array}
      \right)
    \right)\\
    & & - 
    \sum_{j=1}^k 
    \chi\left(
      R(\nu_\alpha)_*\left(
      \begin{array}{c}
        {\mathcal K}osz(\nu_\alpha^*(f_1,\dots,f_{j-1}, l_{j+1},\dots, l_k))\\
        \otimes \\
        {\mathcal E}NC(\nu_\alpha^*(\D f_1,\dots, \D f_{j-1}, \D l_{j},\dots, \D l_k))
      \end{array}
      \right)
    \right)\\
    & = & 
    \sum_{j=1}^k 
    \chi\left(
      R(\nu_\alpha)_*\left(
      \begin{array}{c}
        {\mathcal K}osz(\nu_\alpha^*(f_1,\dots,f_{j}, l_{j+1},\dots, l_k))\\
        \otimes \\
        {\mathcal E}NC(\nu_\alpha^*(\D f_1,\dots, \D f_{j}, \D l_{j},\dots, \D l_k))
      \end{array}
      \right)
    \right)\\
    & & - 
    \sum_{j=1}^k 
    \chi\left(
      R(\nu_\alpha)_*\left(
      \begin{array}{c}
        {\mathcal K}osz(\nu_\alpha^*(f_1,\dots,f_{j-1}, l_{j},\dots, l_k))\\
        \otimes \\
        {\mathcal E}NC(\nu_\alpha^*(\D f_1,\dots, \D f_{j}, \D l_{j},\dots, \D l_k))
      \end{array}
      \right)
    \right).
  \end{eqnarray*}
  If, moreover, the closures of the strata $X^\alpha = \overline{V^\alpha}$ 
  have rectified homological depth $\mathrm{rHd}(X^\alpha) = \dim(X^\alpha)$ 
  for rational coefficients and all $\alpha \in A$, then 
  \begin{eqnarray*}
    & & \sum_{\alpha \in A} \mu(\alpha; f) \cdot 
    \chi(X, V^\alpha)\\
    %b_{\dim (X,0) - \dim(V^\alpha)}(C(\mathcal L(X, V^\alpha)), \mathcal L(X, V^\alpha))\\
    & = & 
    \sum_{j=1}^{k} (-1)^{j-1}\sum_{\alpha \in A}\chi\left(
      R(\nu_\alpha)_*\left(
      \begin{array}{c}
        {\mathcal K}osz(\nu_\alpha^*(f_1,\dots,f_{j-1}))\\
        \otimes \\
        {\mathcal E}NC(\nu_\alpha^*(\D f_1,\dots, \D f_{j}))
      \end{array}
    \right)
    \right)\cdot \chi(X, V^\alpha).
  \end{eqnarray*}
  where $\chi(X, V^\alpha) = \chi_{\mathrm{top}}(C(\mathcal L(X, V^\alpha)), \mathcal L(X, V^\alpha))$
  denotes the topological Euler characteristic of the pair of spaces.

  \medskip
  \noindent
  All holomorphic Euler characteristics appearing in these formulae are non-negative.
\end{theorem}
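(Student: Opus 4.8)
The plan is to deduce Theorem~\ref{thm:LeGreuelFormulaI} from the combination of three ingredients already developed: the additive L\^e-Greuel decomposition in Corollary~\ref{cor:MilnorNumbersCompleteIntersections}, the identification of the thimble numbers $\mu(\alpha; f)$ with Morse numbers and the resulting homological decomposition from Theorem~\ref{thm:MorseNumbersAreEqualToThimbleNumbers}, and the local Riemann--Roch statement Theorem~\ref{thm:MainWorkhorse} which turns a polar multiplicity (a topological obstruction) into a holomorphic Euler characteristic. The key geometric translation is the formula \eqref{eqn:ThimbleNumber}, $\mu(\alpha; f) = \mathrm{mult}_l(\Gamma_\alpha(f,l),0) - \mathrm{mult}_f(\Gamma_\alpha(f,l),0)$ in the hypersurface case, together with its iterate coming from Corollary~\ref{cor:MilnorNumbersCompleteIntersections}. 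So the first step is to fix a generic change of coordinates $(y_1,\dots,y_k)$ in the codomain and generic linear forms $l_1,\dots,l_k$ as in that corollary, so that
\[
  \mu(\alpha; f) = \sum_{j=1}^k \mu\bigl(\alpha;\, f_j \mid (X \cap \{f_1=\dots=f_{j-1}=0\}\cap\{l_{j+1}=\dots=l_k=0\})\bigr).
\]

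Second, for each fixed $j$ I would analyze the single summand $\mu(\alpha; f_j\mid Z_j)$ where $Z_j = X\cap\{f_1=\dots=f_{j-1}=0\}\cap\{l_{j+1}=\dots=l_k=0\}$. By \eqref{eqn:ThimbleNumber} this equals $\mathrm{mult}_{l_j}(\Gamma^{Z_j}_\alpha(f_j,l_j),0) - \mathrm{mult}_{f_j}(\Gamma^{Z_j}_\alpha(f_j,l_j),0)$, the difference of two polar multiplicities on the stratum closure inside $Z_j$. The crucial observation is that each of these polar multiplicities is precisely the topological obstruction of Theorem~\ref{thm:MainWorkhorse} for an appropriate collection of $1$-forms: the relative polar variety $\Gamma_\alpha$ is cut out (on the Nash level) by the degeneracy locus of the collection of differentials $(\D f_1,\dots,\D f_j,\D l_{j+1},\dots,\D l_k)$, and slicing by a generic fiber of the remaining function (either $l_j$ or $f_j$) plus restricting to a Milnor ball realizes the multiplicity as $\lambda = \chi(R(\nu_\alpha)_*(\mathcal Kosz(\nu_\alpha^*(\cdots))\otimes\mathcal ENC(\nu_\alpha^*(\cdots))))$. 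This is where the two ``$f_1,\dots,f_{j-1},l_{j+1},\dots,l_k$'' Koszul factors and the two different Eagon--Northcott factors (one with $\D l_j$, one with $\D f_j$) in the first displayed formula come from: the minuend uses $\D f_j$ and the subtrahend uses $\D l_j$, matching $\mathrm{mult}_{l_j}$ versus $\mathrm{mult}_{f_j}$. I would verify that the hypotheses of Theorem~\ref{thm:MainWorkhorse}, in particular the isolatedness condition \eqref{eqn:IntersectionAssumption}, are met for these collections by genericity of the $l_i$, invoking Lemma~\ref{lem:RegularValueEtc} and the Curve Selection Lemma argument there; this genericity check is the main technical obstacle, since one must simultaneously control the polar loci on all strata closures $X^\alpha$ and for all $j$.

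Third, the second (equivalent) displayed formula is obtained by a telescoping/re-indexing of the same summands: I would show by hand that the alternating combination $\sum_j [\,\chi(\text{Kosz}(\dots f_j\dots)\otimes\text{ENC}(\dots\D f_j\dots)) - \chi(\text{Kosz}(\dots l_j\dots)\otimes\text{ENC}(\dots\D f_j\dots))\,]$ rearranges to the first one, using that adjacent terms with matching Koszul factors cancel — essentially the same bookkeeping that underlies the classical L\^e-Greuel alternating sum \eqref{eqn:OriginalLeGreuel}. For the last, global formula, the input is the homological bouquet decomposition \eqref{eqn:HomologicalBouquetDecomposition}: taking topological Euler characteristics of both sides and using that each thimble $(C(\mathcal L(X,V^\alpha)),\mathcal L(X,V^\alpha))$ contributes $\chi(X,V^\alpha)$, one gets $\chi_{\topo}(M_f) - 1 = \sum_\alpha \mu(\alpha;f)\cdot\chi(X,V^\alpha)$ up to sign conventions; the rectified-homological-depth hypothesis $\mathrm{rHd}(X^\alpha)=\dim X^\alpha$ is what guarantees the relevant perversity/duality so that the local Euler characteristics of the sheaf complexes $R(\nu_\alpha)_*(\mathcal Kosz(\nu_\alpha^*(f_1,\dots,f_{j-1}))\otimes\mathcal ENC(\nu_\alpha^*(\D f_1,\dots,\D f_j)))$ assemble into $\chi_{\topo}$ with the right multiplicities $\chi(X,V^\alpha)$ (this is the point where one needs a result in the spirit of the index theorems of \cite{EbelingGuseinZadeSeade04}, applied stratum by stratum). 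Finally, the non-negativity of every holomorphic Euler characteristic is not a new argument: it is exactly the last sentence of Theorem~\ref{thm:MainWorkhorse}, since each such $\chi$ has just been identified with a topological obstruction counting (with signs that turn out positive, by the local model being a resolution of a structure sheaf as in the proof of Theorem~\ref{thm:MainWorkhorse}) isolated degeneracy points. I expect the genericity verification in the second step and the rectified-homological-depth bookkeeping in the global formula to be the two places demanding real care; everything else is assembly of results already in hand.
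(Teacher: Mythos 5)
Your scaffolding---reduction via Corollary \ref{cor:MilnorNumbersCompleteIntersections} to one function at a time, then Theorem \ref{thm:MainWorkhorse} to convert point counts into holomorphic Euler characteristics---is indeed the paper's strategy, but you have attached the wrong geometric argument to the first displayed formula, and the step that actually proves it is missing. The identity (\ref{eqn:ThimbleNumber}) realizes the two polar multiplicities as obstructions on a generic fiber of the map containing $f_j$, respectively of the map containing $l_j$; translated through Theorem \ref{thm:MainWorkhorse}, both Eagon--Northcott factors must then contain \emph{both} $\D f_j$ and $\D l_j$ (the polar curve is the degeneracy locus of the full collection), while the two \emph{Koszul} factors differ. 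That is exactly the shape of the \emph{second} displayed formula, and that is how the paper proves it. The \emph{first} formula has the opposite shape: both terms are computed on one and the same slice, a generic fiber of $F=(f_1,\dots,f_{j-1},l_{j+1},\dots,l_k)$, and only the ENC factor changes ($\D f_j$ versus $\D l_j$). Its minuend counts \emph{all} degeneracy points of $(\D f_1,\dots,\D f_j,\D l_{j+1},\dots,\D l_k)$ on that fiber, i.e.\ the Morse points of the Morsification $f_j - t'\,l_j$ \emph{plus} a residual contribution concentrated at the origin, and the entire content of the proof is the identification of that residual term with the subtrahend. In the paper this is done by conservation of number applied to the unfolding $\D f_j - t\cdot\D l_j$, followed by a homotopy-of-frames argument: since $\D f_j(0)$ vanishes on all limiting tangent planes, along $\nu^{-1}(X\cap\partial B_\varepsilon\cap F^{-1}(\{0\}))$ the frame $\nu^*\omega^{t'}$ is, for $\varepsilon$ small, homotopic to that of $(\D f_1,\dots,\D f_{j-1}, t'\D l_j,\D l_{j+1},\dots,\D l_k)$, so the residual obstruction is computed by the complex with $\D l_j$ in place of $\D f_j$. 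Nothing in your proposal plays this role, and your assertion that the minuend/subtrahend of the first formula directly realize $\mathrm{mult}_{l_j}$ versus $\mathrm{mult}_{f_j}$ is not correct as stated, because neither term of the first formula is sliced by a fiber of $f_j$ or of $l_j$.

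Two further mismatches. The proposed ``telescoping/re-indexing'' deriving the second formula from the first is not a formal manipulation available to you: the Koszul factors in the two formulas involve $k-1$ versus $k$ functions and the ENC factors $k$ versus $k+1$ forms, so the individual Euler characteristics are different numbers and do not cancel pairwise across $j$; the second equality should be (and in the paper is) proved directly from (\ref{eqn:ThimbleNumber})---essentially the argument you placed under the first formula, which needs no residual term at all. For the global formula, your appeal to perversity/duality and stratum-wise index theorems misidentifies the role of the hypothesis $\mathrm{rHd}(X^\alpha)=\dim X^\alpha$: in the paper it is used to obtain the short exact sequence $0\to H_{d-k+1}(M_{f'};\QQ)\to H_{d-k+1}(M_{f'},M_f;\QQ)\to H_{d-k}(M_f;\QQ)\to 0$ whose middle term has dimension $\lambda$, with $\lambda$ supplied by Theorem \ref{thm:MainWorkhorse} applied to $f'=(f_1,\dots,f_{k-1})$ and the forms $(\D f_1,\dots,\D f_k)$, and the alternating sign $(-1)^{j-1}$ then comes out of the induction on $k$; your sketch produces neither the alternation in $j$ nor the place where the connectivity of the Milnor fibers enters. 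Your final point, non-negativity via Theorem \ref{thm:MainWorkhorse}, is fine.
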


\begin{remark}
  \label{rem:ClassicalLeGreuel}
  If $(X, 0) = (\CC^n, 0)$, then the last formula transforms naturally into 
  the classical L\^e-Greuel formula from \cite{Le74} and \cite{Greuel75}. 
  There is only one stratum and the Nash modification is an isomorphism. 
  Due to dimensional reasons, both $\mathrm{Kosz}(f_1,\dots, f_{j-1})$ and 
  $\mathrm{ENC}(\D f_1,\dots, \D f_j)$ are exact but in degree $0$. 
  This reduces the computation of the Euler characteristic to the colength 
  of the ideal 
  \[
    \langle f_1,\dots, f_{j-1}\rangle + J
    %\left\langle
    %\det\left(\mathrm{Jac}(f_1,\dots, f_j)\right)_{(1,\dots,j), (i_1,\dots, i_j)}: 
    %\det\left(\frac{\partial(f_1,\dots, f_j)}{\partial x_{i_1} \dots \partial x_{i_j}}\right) : 
    %0 < i_1 < \dots < i_j \leq n \right\rangle
  \]
  where $J$ is the ideal of maximal minors of the jacobian matrix of $(f_1,\dots, f_j)$. 
\end{remark}

\begin{proof}
  Recall formula (\ref{eqn:MilnorNumbersCompleteIntersections}) 
  from Corollary \ref{cor:MilnorNumbersCompleteIntersections}:
  \[
    \mu(\alpha; f) = \sum_{j = 1}^k \mu\left(\alpha; f_j|(X \cap \{f_{1} = \dots = f_{j-1} = 0\} \cap \{l_k = \dots = l_{j+1} = 0\})\right).
  \]
  Every one of the summands on the right hand side is the number 
  of Morse points of a function $f_j$ on the \textit{central} 
  fiber of a map 
  \[
    F = (f_1,\dots,f_{j-1}, l_{j+1}, \dots, l_k) \colon (X, 0) \to (\CC^{k-1}, 0)
  \]
  which has an isolated singularity on $(X, 0)$ in the stratified sense 
  by the assumptions made using Lemma \ref{lem:RegularValueEtc}.
  Without loss of generality we may replace $(X, 0)$ by the closure of an 
  irreducible stratum $V^\alpha$ and henceforth drop $\alpha$ from the 
  notation in what follows, carrying out the proof only for the single, irreducible, 
  top-dimensional stratum.

  We may apply Theorem \ref{thm:MainWorkhorse} to $F$ and the 
  collection of $1$-forms given by 
  \[
    \omega = (\D f_1,\dots, \D f_{j-1}, \D f_{j}, \D l_{j+1}, \dots, \D l_k).
  \]
  The resulting Euler characteristic
  \[
    \chi\left(
      R(\nu_\alpha)_*\left(
      \begin{array}{c}
        {\mathcal K}osz(\nu_\alpha^*(f_1,\dots,f_{j-1}, l_{j+1},\dots, l_k))\\
        \otimes \\
        {\mathcal E}NC(\nu_\alpha^*(\D f_1,\dots, \D f_j, \D l_{j+1},\dots, \D l_k))
      \end{array}
      \right)
    \right)
  \]
  counts the number of degeneracy points of $\omega$ on a generic fiber of 
  $F$. 

  %Let $c = (c_1,\dots, c_n)$ be the coefficients of the linear form $l_j$. 
  Consider the families $F^v$ and $\omega^t$ in the parameters 
  $(t, v) \in \CC \times \CC^{k-1}$ induced by 
  the unfolding 
  \[
    (f_1 - v_1, \dots, f_{j-1} - v_{j-1}, l_{j+1} - v_j, \dots, l_k - v_{k-1})
  \]
  and 
  \[
    \D f_j - t \cdot \D l_j.
  \]
  We may assume $l_j$ to be sufficiently generic as to provide a Morsification 
  of the function 
  \[
    f_j|(X \cap \{f_1 = \dots = f_{j-1} = l_{j+1} = \dots = l_k = 0\})
  \]
  which has an isolated singularity at the origin in the stratified sense according to 
  Lemma \ref{lem:RegularValueEtc}. 

  Since the holomorphic Euler characteristic satisfies the law of conservation 
  of number \cite{GiraldoGomezMont02}, there exist open neighborhoods 
  $0 \in U \subset X$ and $(0, 0) \in T \subset \CC \times \CC^{k-1}$ such that 
  for all $(t, v) \in T$ one has 
  \[
    \chi\left(
      R(\nu_\alpha)_*\left(
      \begin{array}{c}
        {\mathcal K}osz(\nu_\alpha^*(F^0))\\
        \otimes \\
        {\mathcal E}NC(\nu_\alpha^*(\omega^0))
      \end{array}
      \right)
    \right)_0
    = \sum_{p \in U}
    \chi\left(
      R(\nu_\alpha)_*\left(
      \begin{array}{c}
        {\mathcal K}osz(\nu_\alpha^*(F^v))\\
        \otimes \\
        {\mathcal E}NC(\nu_\alpha^*(\omega^t))
      \end{array}
      \right)
    \right)_p
  \]
  For $0 \neq t'$ sufficiently small the sum on the right hand side splits 
  into a contribution of $1$ for each Morse critical point $\{p_i\}_{i=1}^\mu$
  of $f_j - t'\cdot l_j$ 
  on the regular locus of the central fiber of $F^0$ and a potential 
  residual contribution at the origin $p = 0$.
  Now let $(t, v)$ be close to $(t', 0)$ so that $v \in \Omega$ in the sense of 
  Theorem \ref{thm:MainWorkhorse} (cf. Remark \ref{rem:RegularValueRestriction}) 
  and so that $\omega^t$ degenerates on 
  $(F)^{-1}(\{v\}) \cap X$ as in Lemma \ref{lem:Morsification}. Passing from 
  $(t', 0)$ to $(t, v)$ we see no local changes at the Morse critical points 
  $p_i$, except that they may move their position. The residual point $p = 0$, 
  on the other hand, splits up into further degeneracy points
  $\{q_j\}_{j = 1}^\lambda$ of $\omega^t$, 
  the number of which coincides with 
  \[
    \lambda = \chi\left(
      R(\nu_\alpha)_*\left(
      \begin{array}{c}
        {\mathcal K}osz(\nu_\alpha^*(F^0))\\
        \otimes \\
        {\mathcal E}NC(\nu_\alpha^*(\omega^{t'}))
      \end{array}
      \right)
    \right)_0.
  \]
  As $t' \neq 0$ while $\D f_j(0)$ vanishes on any limiting tangent space at 
  the origin, this Euler characteristic coincides with the one for 
  \[
    \tilde \omega = (\D f_1,\dots, \D f_{j-1}, t' \cdot \D l_j, \D l_{j+1}, \dots, \D l_k)
  \]
  in place of $\omega^{t'}$. To see this, recall that according to 
  Theorem \ref{thm:MainWorkhorse}, $\lambda$ as above 
  is a \textit{topological} obstruction which depends only on the homotopy class 
  of the frame given by $\nu^*\omega^{t'}$ along the boundary 
  \[
    \nu^{-1}(X \cap \partial B_\varepsilon \cap F^{-1}(\{0\})).
  \]
  We claim that for $\varepsilon > 0$ sufficiently small, the homotopy classes 
  of $\nu^*\omega^{t'}$ and $\nu^* \tilde \omega$ coincide. 
  To see this, choose an auxillary metric on the frame bundle for $\tilde \Omega^1$ and 
  note that
  \[
    %\lim_{\varepsilon \to 0} 
    \max_{p \in \nu^{-1}(X \cap B_\varepsilon)} 
    |\tilde \omega(p) - \omega^{t'}(p)|
    \overset{\varepsilon \to 0}{\longrightarrow}
    0
  \]
  by continuity, as their difference tends to $\nu^*(\D f_j(0)) = 0$.
  At the same time, $\tilde \omega$ is non-degenerate along the fiber $\nu^{-1}(\{0\})$ 
  and, since the latter is compact, the distance of $\tilde \omega$ from the 
  locus of degenerate collections of sections is positively bounded away from zero.
  But then over $\nu^{-1}(\partial B_\varepsilon \cap X)$ the section
  $\omega^{t'}$ must come to lay in some tubular neighborhood of 
  $\tilde \omega$ for $\varepsilon >0$ small enough.
  
  It follows that the number 
  $\lambda$ is also computed using $\tilde \omega$ which finishes the proof 
  of the first equality in Theorem \ref{thm:LeGreuelFormulaI}.

  The second equality follows in a similar way, but using (\ref{eqn:ThimbleNumber}). 
  One just applies Theorem \ref{thm:MainWorkhorse} to the functions 
  \[
    (f_1,\dots,f_{j-1}, f_j, l_{j+1}, \dots, l_k)
  \]
  and
  \[
    (f_1,\dots,f_{j-1}, l_j, l_{j+1}, \dots, l_k);
  \]
  both with the collection of $1$-forms given by 
  \[
    (\D f_1,\dots, \D f_j, \D l_j, \dots, \D l_k).
  \]
  The proof is even easier as no eventual 'residual contribution' has to be taken 
  into account. 

  For the third and last equality, recall the strategy of proof for the classical 
  L\^e-Greuel formula. According to Lemma \ref{lem:RegularValueEtc}, we may assume that 
  the function 
  \[
    f' \colon (\CC^n, 0) \to (\CC^{k-1}, 0), \quad x \mapsto (f_1(x), \dots, f_{k-1}(x))
  \]
  has an isolated singularity on $(X,0)$ in the stratified sense. 
  An application of Theorem \ref{thm:MainWorkhorse} to this function and the collection 
  of $1$-forms given by 
  \[
    \omega = (\D f_1,\dots, \D f_{k-1}, \D f_k)
  \]
  counts the number $\lambda$ of critical points of a Morsification of $f_k$ on the 
  Milnor fiber $M_{f'}$ of $f'$ on $(X, 0)$. According to the additional assumptions 
  on the rectified homological depths for this last formula we have 
  a short exact 
  sequence\footnote{This needs to be adapted in the obvious ways for the case $d = k$.} 
  in homology with rational coefficients 
  \[
    0 \to 
    H_{d-k+1}(M_{f'}) \to 
    H_{d-k+1}(M_{f'}, M_f) \to 
    H_{d-k}(M_f) \to 
    0
  \]
  where $M_{f}$ is the Milnor fiber of the original function $f$. 
  The term in the middle is a $\QQ$-vector space of dimension $\lambda$. 
  The claim now follows by induction on the codimension $k$.
\end{proof}

\section{Computational aspects}

Advances in the field of Computer Algebra allow one to tackle increasingly 
difficult computational problems. In this last section we briefly report 
on our attempts to render the findings in this paper computable in 
OSCAR \cite{Oscar}. This section might be subject to updates in a forthcoming 
version of this preprint.

\begin{example}
  \label{exp:PreBaked2x2Surface}
  We continue with a modified version of Example \ref{exp:RunningExampleMorsification}, 
  where instead of the symmetric matrices we consider the full space
  \[
    X = \left\{ \det \begin{pmatrix} x & w \\ z & y \end{pmatrix} = 0 \right\} \subset \CC^{2 \times 2}
  \]
  and only one function 
  \[
    f \colon \CC^{2\times 2} \to \CC, \quad 
    \begin{pmatrix} x & w \\ z & y \end{pmatrix} \mapsto y - x^k.
  \]
  The same reasoning as in Example \ref{exp:RunningExampleIntroduction} applies 
  and we find that $M_{f|(X, 0)} \cong \bigvee_{i=1}^k S^2$ to be 
  a bouquet of $k$ spheres, but of dimension $2$, accounting for the higher-dimensional setup.
  Again, according to Theorem \ref{thm:BouquetDecompositionHypersurface}, one 
  of these spheres stems from the complex link $\mathcal L(X, \{0\})$ so that 
  we expect $\mu(1; f) = k-1$.
  Let us verify this using Theorem \ref{thm:LeGreuelFormulaI}
  and Corollary \ref{cor:bakingComplexes}. 

  Indeed, due to the simplicity of the example, the sum in the first formula 
  of Theorem \ref{thm:LeGreuelFormulaI} reduces to a single pair 
  \[
    \mu(1; f) = R(\nu_1)_* \left(\mathcal ENC(\nu_1^*\D f)\right)
     - R(\nu_1)_* \left(\mathcal ENC(\nu_1^*\D l)\right)
  \]
  for some sufficiently general linear form $l$ leading to a Morsification of $f|(X, 0)$. 
  Again, we will choose this linear form to be $l = x + w$.

  The complex $\mathrm{Prep}(X; 0, 1)$ from Corollary \ref{cor:bakingComplexes} takes the form 
  \begin{equation}
    \label{eqn:PreBakedComplex2x2Surface}
    \begin{xy}
      \xymatrix{
        0 & 
        R^1 \ar[l] & 
        R^9 \ar[l]_{A_{-1}} & 
        R^{16} \ar[l]_{A_{-2}} & 
        R^9 \ar[l]_{A_{-3}} & 
        R^1 \ar[l]_{A_{-4}} & 
        0 \ar[l]
      }
    \end{xy}
  \end{equation}
  in cohomological degrees $0$ to $-5$ 
  over the ring $R = \OO_{\CC^{2 \times 2}, 0}[a_{1, 1}, a_{1, 2}, a_{2, 1}, a_{2, 2}]$ 
  with matrices 
%matrix(map(coh, 1)) = [-x\cdot z+y\cdot w; -a_{1, 2}\cdot y-a_{2, 2}\cdot z; -a_{1, 2}\cdot x-a_{2, 2}\cdot w; -a_{1, 1}\cdot a_{2, 2}+a_{1, 2}\cdot a_{2, 1}; a_{1, 1}\cdot x+a_{1, 2}\cdot y; a_{2, 1}\cdot x+a_{2, 2}\cdot y; a_{1, 1}\cdot w+a_{1, 2}\cdot z; a_{1, 1}\cdot y+a_{2, 1}\cdot z; a_{1, 1}\cdot x+a_{2, 1}\cdot w]
\[
  A_{-1} = 
  \begin{pmatrix}
                                -x\cdot z + y\cdot w\\
                  -a_{1, 2}\cdot y - a_{2, 2}\cdot z\\
                  -a_{1, 2}\cdot x - a_{2, 2}\cdot w\\
    -a_{1, 1}\cdot a_{2, 2} + a_{1, 2}\cdot a_{2, 1}\\
                   a_{1, 1}\cdot x + a_{1, 2}\cdot y\\
                   a_{2, 1}\cdot x + a_{2, 2}\cdot y\\
                   a_{1, 1}\cdot w + a_{1, 2}\cdot z\\
                   a_{1, 1}\cdot y + a_{2, 1}\cdot z\\
                   a_{1, 1}\cdot x + a_{2, 1}\cdot w
  \end{pmatrix}
  \quad
%matrix(map(coh, 2)) = [a_{1, 2} w -z 0 0 0 0 0 0; a_{2, 2} -x y 0 0 0 0 0 0; 0 -a_{1, 1} 0 z 0 0 0 -a_{1, 2} 0; 0 -a_{2, 1} 0 -y 0 0 0 -a_{2, 2} 0; 0 0 -a_{1, 1} w 0 0 0 0 -a_{1, 2}; 0 0 -a_{2, 1} -x 0 0 0 0 -a_{2, 2}; -a_{1, 1} 0 0 0 -z 0 y 0 0; a_{1, 2} 0 0 0 -w 0 x 0 0; -a_{2, 1} -y 0 0 -y -z 0 0 y; a_{2, 2} -x 0 0 -x -w 0 0 x; 0 -a_{1, 2} 0 -w -a_{1, 2} 0 -a_{2, 2} 0 a_{1, 2}; 0 0 0 0 -a_{1, 1} 0 -a_{2, 1} a_{1, 2} a_{1, 1}; 0 0 0 -x -a_{2, 2} a_{1, 2} 0 0 0; 0 0 0 y -a_{2, 1} a_{1, 1} 0 0 0; a_{1, 1} 0 0 0 0 0 0 -w z; a_{2, 1} 0 0 0 0 0 0 x -y]
  A_{-2} = 
  \begin{pmatrix}
     a_{1, 2} &         w &        -z & \cdots &         0 &         0 &         0\\
     a_{2, 2} &        -x &         y & \cdots &         0 &         0 &         0\\
            0 & -a_{1, 1} &         0 & \cdots &         0 & -a_{1, 2} &         0\\
       \vdots & \vdots & \vdots & \ddots & \vdots & \vdots & \vdots \\
            0 &         0 &         0 & \cdots &         0 &         0 &         0\\
     a_{1, 1} &         0 &         0 & \cdots &         0 &        -w &         z\\
     a_{2, 1} &         0 &         0 & \cdots &         0 &         x &        -y
  \end{pmatrix}
\]

%matrix(map(coh, 3)) = [-a_{1, 1} 0 -w 0 z 0 0 0 0 0 0 0 0 0 a_{1, 2} 0; -a_{2, 1} -a_{1, 1} x -w -y z 0 0 0 0 0 0 0 0 a_{2, 2} a_{1, 2}; 0 -a_{2, 1} 0 x 0 -y 0 0 0 0 0 0 0 0 0 a_{2, 2}; -a_{2, 1} 0 0 -w 0 z a_{2, 2} 0 -a_{1, 2} 0 y 0 -z 0 a_{2, 2} 0; 0 0 0 0 0 0 0 a_{2, 2} 0 -a_{1, 2} x 0 -w 0 0 0; 0 0 y 0 0 0 a_{2, 1} 0 -a_{1, 1} 0 0 y 0 -z 0 0; -a_{2, 1} 0 x -w 0 z 0 a_{2, 1} 0 -a_{1, 1} 0 x 0 -w a_{2, 2} 0; 0 0 0 a_{1, 2} -a_{2, 1} a_{1, 1} 0 0 0 0 -a_{2, 1} a_{2, 2} -a_{1, 1} a_{1, 2} 0 0; y z 0 0 0 0 x -y w -z 0 0 0 0 x w]
\[
  A_{-3} = 
  \begin{pmatrix}
    -a_{1, 1} &         0 & -w &        0 & \cdots &        0 & a_{1, 2} &        0\\
    -a_{2, 1} & -a_{1, 1} &  x &       -w & \cdots &        0 & a_{2, 2} & a_{1, 2}\\
            0 & -a_{2, 1} &  0 &        x & \cdots &        0 &        0 & a_{2, 2}\\
       \vdots & \vdots & \vdots & \vdots & \ddots & \vdots & \vdots & \vdots \\
            0 &         0 &  0 & a_{1, 2} & \cdots & a_{1, 2} &        0 &        0\\
            y &         z &  0 &        0 & \cdots &        0 &        x &        w
  \end{pmatrix}
\]

%matrix(map(coh, 4)) = [-a_{2, 1}\cdot x-a_{2, 2}\cdot y -a_{2, 1}\cdot w-a_{2, 2}\cdot z a_{1, 1}\cdot w+a_{1, 2}\cdot z a_{1, 1}\cdot x+a_{2, 1}\cdot w -a_{1, 1}\cdot y-a_{2, 1}\cdot z -a_{1, 2}\cdot x-a_{2, 2}\cdot w a_{1, 2}\cdot y+a_{2, 2}\cdot z -x\cdot z+y\cdot w -a_{1, 1}\cdot a_{2, 2}+a_{1, 2}\cdot a_{2, 1}]
\[
  A_{-4} = 
  \begin{pmatrix}
    -a_{2, 1}\cdot x - a_{2, 2}\cdot y & -a_{2, 1}\cdot w - a_{2, 2}\cdot z & \cdots & -x\cdot z + y\cdot w & -a_{1, 1}\cdot a_{2, 2} + a_{1, 2}\cdot a_{2, 1}
  \end{pmatrix}
% \begin{pmatrix}
%   -a_{2, 1}\cdot x - a_{2, 2}\cdot y & -a_{2, 1}\cdot w - a_{2, 2}\cdot z & a_{1, 1}\cdot w + a_{1, 2}\cdot z & a_{1, 1}\cdot x + a_{2, 1}\cdot w & -a_{1, 1}\cdot y - a_{2, 1}\cdot z & -a_{1, 2}\cdot x - a_{2, 2}\cdot w & a_{1, 2}\cdot y + a_{2, 2}\cdot z & -x\cdot z + y\cdot w & -a_{1, 1}\cdot a_{2, 2} + a_{1, 2}\cdot a_{2, 1}
% \end{pmatrix}
\]
We substitute the $a_{i, j}$ for the differential of $f$
\[
  \begin{pmatrix}
    a_{1, 1} & a_{1, 2} &
    a_{2, 1} & a_{2, 2} 
  \end{pmatrix}
  \mapsto 
  \begin{pmatrix}
    -k \cdot x^{k-1} & 0 &
    0 & 1
  \end{pmatrix}
\]
and obtain a new complex over the ring $\CC\{x, y, z, w\}$ 
which turns out to be homotopy equivalent to the complex
\[
  \mathrm{Prep}(X; 0, 1)|_{\D f} \cong_{\mathrm{ht}} \Kosz(k\cdot x^{k-1}, y, z, w).
\]
This complex is exact but in cohomological degree zero, where 
its homology is of length $\lambda = k-1$ as desired.

Repeating this process with $\D l$ in place of $\D f$ for the 
second summand, we find a complex which is homotopic to the 
zero complex. Therefore, the second summand for $\mu(1; f)$ 
above is equal to zero and we find 
\[
  \mu(1; f) = \lambda - 0 = k-1
\]
as anticipated.
\end{example}

%julia> matrix(map(coh, 1))
\begin{example}
  \label{exp:PreBaked2x2Curve}
  We pick up on Example \ref{exp:RunningExampleCompleteIntersection} for the function
  \[
    g \colon (\CC^{2\times 2}, 0) \to (\CC^2, 0), \quad 
    \begin{pmatrix}
      x & w \\ z & y
    \end{pmatrix}
    \mapsto 
    \left(z - w, y - x^k\right)
  \]
  on $X = \{ \varphi \in \CC^{2\times 2} : \det \varphi = 0\}$. 
  The generic complex $\mathrm{Prep}(X; 1, 2)$ from 
  Corollary \ref{cor:bakingComplexes} in question for the computation of $\mu(1; g)$
  has several parameters: A scalar $c$ for the Koszul-, and a matrix of parameters 
  \[
    \begin{pmatrix}
      a_{1, 1} & a_{1, 2} & a_{2, 1} & a_{2, 2} \\
      b_{1, 1} & b_{1, 2} & b_{2, 1} & b_{2, 2} \\
    \end{pmatrix}
  \]
  for the Eagon-Northcott part. Let $R$ be the corresponding polynomial 
  ring over $\OO_{\CC^{2\times 2}, 0}$.
  Simplifying the raw output from the {\v C}ech cohomology computation up 
  to homotopy yields a complex
  \begin{equation}
    \label{eqn:PreBakedComplex2x2Surface}
    \begin{xy}
      \xymatrix{
        0 & 
        R^2 \ar[l] & 
        R^{11} \ar[l]_{A_{-1}} & 
        R^{18} \ar[l]_{A_{-2}} & 
        R^{11} \ar[l]_{A_{-3}} & 
        R^2 \ar[l]_{A_{-4}} & 
        0, \ar[l]
      }
    \end{xy}
  \end{equation}
  again in cohomological degrees $0$ to $-5$.
  The matrices read
  \[
    A_{-1} = 
    \begin{pmatrix}
      c &  0\\
      -x\cdot z + y\cdot w &  0\\
      \begin{matrix}
        a_{1, 1}\cdot b_{2, 1}\cdot x + a_{1, 1}\cdot b_{2, 2}\cdot y + a_{1, 2}\cdot b_{2, 1}\cdot y \\
        \qquad - a_{2, 1}\cdot b_{1, 1}\cdot x - a_{2, 1}\cdot b_{1, 2}\cdot y - a_{2, 2}\cdot b_{1, 1}\cdot y
      \end{matrix} & -y\\
      a_{1, 2}\cdot b_{2, 2}\cdot y - a_{2, 2}\cdot b_{1, 2}\cdot y &  x\\
      \begin{matrix}
        a_{1, 1}\cdot b_{2, 1}\cdot w + a_{1, 1}\cdot b_{2, 2}\cdot z + a_{1, 2}\cdot b_{2, 1}\cdot z \\
        \qquad - a_{2, 1}\cdot b_{1, 1}\cdot w - a_{2, 1}\cdot b_{1, 2}\cdot z - a_{2, 2}\cdot b_{1, 1}\cdot z 
      \end{matrix} & -z\\
      a_{1, 2}\cdot b_{2, 2}\cdot z - a_{2, 2}\cdot b_{1, 2}\cdot z &  w\\
      a_{1, 2}\cdot b_{2, 1}\cdot y - a_{2, 1}\cdot b_{1, 2}\cdot y - a_{2, 1}\cdot b_{2, 2}\cdot z + a_{2, 2}\cdot b_{2, 1}\cdot z & -y\\
      -a_{1, 1}\cdot b_{1, 2}\cdot y - a_{1, 1}\cdot b_{2, 2}\cdot z + a_{1, 2}\cdot b_{1, 1}\cdot y + a_{2, 2}\cdot b_{1, 1}\cdot z &  z\\
      a_{1, 2}\cdot b_{2, 1}\cdot x - a_{2, 1}\cdot b_{1, 2}\cdot x - a_{2, 1}\cdot b_{2, 2}\cdot w + a_{2, 2}\cdot b_{2, 1}\cdot w & -x\\
      -a_{1, 1}\cdot b_{1, 2}\cdot x - a_{1, 1}\cdot b_{2, 2}\cdot w + a_{1, 2}\cdot b_{1, 1}\cdot x + a_{2, 2}\cdot b_{1, 1}\cdot w &  w\\
      0 &  c\\
    \end{pmatrix}
  \]

  %julia> matrix(map(coh, 2))
  \[
    A_{-2} = 
    \begin{pmatrix}
      x\cdot z - y\cdot w & c & \dots & 
      0 &  0\\
      0 & 
      \begin{matrix}
        -a_{1, 1}\cdot b_{2, 2} - a_{1, 2}\cdot b_{2, 1} \\
        \qquad + a_{2, 1}\cdot b_{1, 2} + a_{2, 2}\cdot b_{1, 1} 
      \end{matrix} &
      \dots & 
      0 &  0\\
      \vdots & \vdots & \ddots & \vdots & \vdots \\
      \begin{matrix} 
        -a_{1, 2}\cdot b_{2, 1}\cdot x + a_{2, 1}\cdot b_{1, 2}\cdot x \\
        \qquad + a_{2, 1}\cdot b_{2, 2}\cdot w - a_{2, 2}\cdot b_{2, 1}\cdot w 
      \end{matrix}& 
      0 &
      \dots &
      0 &  x\\
      \begin{matrix}
        a_{1, 1}\cdot b_{1, 2}\cdot x + a_{1, 1}\cdot b_{2, 2}\cdot w \\
        \qquad - a_{1, 2}\cdot b_{1, 1}\cdot x - a_{2, 2}\cdot b_{1, 1}\cdot w 
      \end{matrix} &
      0 &
      \dots & 
      c & -w
    \end{pmatrix}
  \]

  %julia> matrix(map(coh, 3))
  \[
    A_{-3} = 
    \begin{pmatrix}
      \begin{matrix}
        -a_{1, 1}\cdot b_{2, 2} - a_{1, 2}\cdot b_{2, 1} \\
        \qquad + a_{2, 1}\cdot b_{1, 2} + a_{2, 2}\cdot b_{1, 1} 
      \end{matrix} 
      &                      -c & \dots &
      0 &         0\\
      -a_{1, 1}\cdot b_{2, 1} + a_{2, 1}\cdot b_{1, 1} &                       0 &
      \dots & 
      0 &         0\\
      a_{1, 2}\cdot b_{2, 2} - a_{2, 2}\cdot b_{1, 2} &                       0 &
      \dots &
      0 &         0\\
      \vdots & \vdots & \ddots & \vdots & \vdots \\
      a_{1, 2}\cdot b_{2, 1} - a_{2, 1}\cdot b_{1, 2} &                       0 & 
      \dots & 
      z &         0\\
      0 & b_{1, 2}\cdot y + b_{2, 2}\cdot z & 
      \dots &
      0 &         0\\
      0 & a_{1, 2}\cdot y + a_{2, 2}\cdot z & 
      \dots & 
      0 &         0\\
    \end{pmatrix}
  \]

  %julia> matrix(map(coh, 4))
  \[
%   \begin{pmatrix}
%     b_{1, 2}\cdot y + b_{2, 2}\cdot z & b_{1, 2}\cdot x + b_{2, 2}\cdot w & b_{1, 1}\cdot y + b_{2, 1}\cdot z & -x\cdot z + y\cdot w &          0 & b_{2, 1}\cdot w + b_{2, 2}\cdot z & -b_{1, 1}\cdot w - b_{1, 2}\cdot z & b_{2, 1}\cdot x + b_{2, 2}\cdot y & b_{1, 1}\cdot x + b_{1, 2}\cdot y + b_{2, 1}\cdot w + b_{2, 2}\cdot z & c & 0\\
%     a_{1, 2}\cdot y + a_{2, 2}\cdot z & a_{1, 2}\cdot x + a_{2, 2}\cdot w & a_{1, 1}\cdot y + a_{2, 1}\cdot z &          0 & -x\cdot z + y\cdot w & a_{2, 1}\cdot w + a_{2, 2}\cdot z & -a_{1, 1}\cdot w - a_{1, 2}\cdot z & a_{2, 1}\cdot x + a_{2, 2}\cdot y & a_{1, 1}\cdot x + a_{1, 2}\cdot y + a_{2, 1}\cdot w + a_{2, 2}\cdot z & 0 & c\\
%   \end{pmatrix}
    A_{-4} = 
    \begin{pmatrix}
      b_{1, 2}\cdot y + b_{2, 2}\cdot z & b_{1, 2}\cdot x + b_{2, 2}\cdot w & 
      \dots & 
      c & 0\\
      a_{1, 2}\cdot y + a_{2, 2}\cdot z & a_{1, 2}\cdot x + a_{2, 2}\cdot w &
      \dots &
      0 & c\\
    \end{pmatrix}
  \]
  We carry out the substitutions for $k=5$. 
  The first formula from Theorem \ref{thm:LeGreuelFormulaI}
  has two pairs of summands. For $j=1$ we obtain Euler 
  characteristics $6 - 2$, both concentrated in cohomological degree zero.
  Note that in this case there  
  is indeed a non-trivial correction term provided by the second 
  summand. For $j=2$, the two terms cancel, since the first 
  component $g_1 = z - w$ was already a sufficiently general 
  linear form. Thus $\mu(1; f) = 4$ as expected.
\end{example}

Going beyond these rather simple examples treated above with 
computing the full complex $\mathrm{Prep}(X; k, m)$ seems to be 
difficult. 

\begin{example}
  \label{exp:2x3Example}
  Consider the space $\CC^{2\times 3}$ and the subvariety 
  \[
    X = \left\{\varphi = 
    \begin{pmatrix}
      x & y & z \\
      u & v & w
    \end{pmatrix}
    \in \CC^{2\times 3} : \rank \varphi < 2\right\},
  \]
  as usual with its rank stratification. 
  The function 
  \[
    f \colon \CC^{2\times 3} \to \CC^2, \quad
    \begin{pmatrix}
      x & y & z \\
      u & v & w
    \end{pmatrix}
    \mapsto 
    (x-v^k, w - y^l)
  \]
  has a singularity which is isomorphic to the third entry 
  in the list of simple Cohen-Macaulay codimension $2$ surface 
  singularities, \cite[Theorem 3.3]{FruehbisKruegerNeumer10}. 
  These singularities coincide with the ``Rational Triple Points'' 
  classified by Tjurina \cite{Tjurina68} and they admit a 
  ``resolution in family'' (see e.g. \cite{FruehbisKruegerZach23}).
  The original results by Tjurina therefore reveal that 
  \[
    M_{f|(X, 0)} \cong_{\mathrm{ht}} \bigvee_{j=1}^{k + l -1} S^2.
  \]
  Since according to e.g. \cite{Zach18BouquetDecomp} we 
  have $\mathcal L^1(X, \{0\}) \cong S^2$, a comparison with 
  the Bouquet Theorem \ref{thm:BouquetDecompositionsCompleteIntersections}
  yields
  \[
    \mu(1; f) = k + l - 2.
  \]
  Computations with OSCAR for the complex 
  \[
    R(\nu_1)_* \left(\mathcal Kosz(\nu_1^*(f_1)) \otimes \mathcal ENC(\nu_1^*\D f)\right)
  \]
  with the naive approach from \cite{Zach22} using {\v C}ech cohomology 
  produce a complex of free $\CC[x, y, z, u, v, w]$-modules with 
  the ranks listed in Table \ref{tab:BettiNumbersPushforward}.
  \begin{table}
    \label{tab:BettiNumbersPushforward}
    \begin{center}
      \begin{tabular}{|r|r|r|r|r|r|r|r|}
        \hline
        cohom. degree & $-9$ & $-8$ & $-7$ & $-6$ & $-5$ & $-4$ \\
        \hline
        rank & 0 & 18 & 3627 & 51282 & 308043 & 1063948\\
        \hline
      %\end{tabular}
      %\begin{tabular}{|r|r|r|r|r|r|r|r|r|r|r|r|r|r|r|r|r|}
        \hline
        cohom. degree & $-2$ & $-1$ & $0$ & $1$ & $2$ & $-3$ \\
        \hline
        rank & 3905782 & 4734600 & 4420820 & 3155041& 1639720  & 2416310 \\
        \hline
      %\end{tabular}
      %\begin{tabular}{|r|r|r|r|r|r|r|r|r|r|r|r|r|r|r|r|r|}
        \hline
        cohom. degree & $3$ & $4$ & $5$ & $6$ & & \\
        \hline
        rank &  565580 &  109756 &    8125 &       0 & & \\
        \hline
      \end{tabular}
    \end{center}
    \caption{Ranks in the raw derived pushforward complex from 
    Example \ref{exp:2x3Example} using {\v C}ech cohomology}
  \end{table}
  This complex can be simplified massively to a smaller one, homotopy-equivalent to the original;
  but the simplification itself takes too long to be of any practical use 
  for the moment. 
\end{example}

\begin{remark}
  \label{rem:ComputationalImprovements}
  It is not unlikely that the full computation of the complexes $\mathrm{Prep}(X; k, m)$ 
  might be impractical in general. However, we have not yet tried to achieve this 
  with e.g. the algorithm using the exterior algebra from \cite{EisenbudSchreyer08}.
  
  Should that turn out to also not be fruitful, one still has the possibility to only 
  compute the spectral sequence of the double complex induced on the direct image 
  of the individual sheaves appearing in 
  $\mathcal Kosz(\nu^*f) \otimes \mathcal ENC(\nu^*\omega)$ from Theorem 
  \ref{thm:MainWorkhorse}. We plan to explore these possibilities further in the future. 
\end{remark}

\section*{Acknowledgements}
I would like to thank Xavier G\'omez-Mont for inspiring discussions during a 
visit in Guanajuato this year. This extends to the organizers of the 
``International Congress on Complex Geometry, Singularities and Dynamics'' 
for their generous invitation to Cuernavaca.
This work has been funded by the Deutsche Forschungsgemeinschaft 
(DFG, German Research Foundation) – Project-ID 286237555 – TRR 195.

\printbibliography
\end{document}